\documentclass[11pt]{article}
\usepackage{amsmath,amssymb,amsthm}
\usepackage{authblk}
\usepackage{cite}
\usepackage{indentfirst}
\usepackage{array,booktabs,tabularx}
\usepackage{float}
\usepackage{hyperref}

\hypersetup{
  colorlinks=true,
  linkcolor=blue,
  citecolor=blue,
  urlcolor=blue
}

\newtheorem{theorem}{Theorem}[section]
\newtheorem{lemma}{Lemma}[section]
\newtheorem{proposition}{Proposition}[section]

\newtheorem{remark}{Remark}[section]

\numberwithin{equation}{section}
\allowdisplaybreaks[4]
\begin{document}

\title{Global stability and sharp decay for the two-dimensional non-rotating primitive equations without thermal diffusion}
\author[1]{Fengjie Luo}
\author[1]{Quan Wang \thanks{Corresponding author.}\thanks{Email addresses:\ Luofengjie664@163.com\ (F. Luo),\ xihujunzi@scu.edu.cn\ (Q. Wang).}}

\affil[1]{College of Mathematics, Sichuan University, Chengdu, Sichuan 610065, PR China}
\date{}
\maketitle

\begin{abstract}
\noindent
We study the two-dimensional non-rotating primitive equations with positive horizontal and vertical viscosity and without thermal diffusion. Our main results are fourfold. First, for every integer \(s\ge4\), we prove that sufficiently small, compatible, doubly periodic \(H^s\) perturbations of a stably stratified rest state with an affine temperature profile generate unique global strong solutions; the horizontal velocity and temperature remain uniformly small at the initial Sobolev order despite the derivative loss in the full diagnostic vertical velocity, and strict stratification persists. Second, we establish an inverse-map conservation law that determines the terminal temperature solely from the initial temperature: the terminal label is identified as the normalized increasing rearrangement of the initial label modulo the vertical period, and the terminal perturbation admits a quadratic expansion with cubic remainder. Third, the limiting equilibrium uniquely minimizes the quadratic perturbation energy among compatible, strictly stratified states with the same periodic label distribution. Finally, we establish sharp algebraic decay in suitable lower-order Sobolev norms for both velocity components and for the temperature and pressure differences from equilibrium. Exact nonlinear solutions provide matching lower bounds, establishing optimality for uniform estimates over the small-data class. The proofs combine weighted energy methods, spectral analysis, and inverse-map identities.
\end{abstract}

\medskip
\noindent\textbf{Keywords.}
Primitive equations; stable stratification; global stability;
asymptotic stratification; sharp decay.

\medskip
\noindent\textbf{2020 Mathematics Subject Classification.}
35B35, 35B40, 35Q35, 35Q86, 76D50.

%============================================================================================================================================================
% Section 1: Introduction
%============================================================================================================================================================

\section{Introduction}

The primitive equations (PE) describe large-scale atmospheric and
oceanic flows, whose ratio of vertical to horizontal length scales is
small \cite{Vallis2017}. Under the hydrostatic approximation, the
vertical momentum equation is replaced by the hydrostatic balance
between the vertical pressure gradient and gravity. The mathematical
formulation of the PE was developed by Lions, Temam, and Wang
\cite{LionsTemamWang1992}. For suitably scaled anisotropic
Navier--Stokes systems, the hydrostatic limit has been justified within
a weak-solution framework \cite{AzeradGuillen2001} and via global,
uniform-in-time strong convergence \cite{LiTiti2019}.

We investigate the long-time dynamics of the two-dimensional non-rotating primitive equations with positive horizontal and vertical viscosities and without thermal diffusion. The reference state is motionless and stably stratified, with an affine temperature profile. The central questions are threefold: whether small perturbations remain uniformly bounded at their initial Sobolev order, which hydrostatic equilibrium is selected by the initial temperature, and how rapidly the solution converges to that equilibrium. Since temperature is transported without smoothing, relaxation must arise from its coupling to the viscous horizontal momentum equation.

Let \(g,\beta,\mu_x,\mu_z>0\), where \(g\) denotes the gravitational acceleration, \(\beta\) the coefficient of thermal expansion, and \(\mu_x,\mu_z\) the horizontal and vertical viscosities. We write \(V,W,T,\pi\) for the total horizontal velocity, vertical velocity, temperature, and kinematic pressure, respectively. For a reference temperature \(T_0\), the hydrostatic balance and the temperature transport equation read
\[
 \partial_z\pi=-g\bigl(1-\beta(T-T_0)\bigr),
 \qquad
 \partial_tT+V\partial_xT+W\partial_zT=0.
\]
For \(\gamma>0\), the hydrostatic rest state is
\[
 (V_s,W_s,T_s)=(0,0,T_0+\gamma z),
 \qquad \pi_s(z)=-gz+\frac{g\beta\gamma}{2}z^2.
\]
Set \(V=v\), \(W=w\), \(T=T_s+\theta\), and \(\pi=\pi_s+p\). We consider real-valued perturbations that are periodic on \(\Omega=\mathbb T_x\times\mathbb T_z\), with periods \(L_x,L_z>0\). The background fields are defined on \(\mathbb R\), so the total temperature and pressure need not be vertically periodic. Upon subtracting the background hydrostatic balance and inserting the affine temperature profile into the transport equation, the terms \(g\beta\theta\) and \(\gamma w\) arise in the vertical momentum and temperature equations, respectively. The corresponding perturbation system is
\begin{equation}\label{eq:1.1.1}
\left\{
\begin{aligned}
 &\partial_tv+v\partial_xv+w\partial_zv
   -\mu_x\partial_{xx}v-\mu_z\partial_{zz}v+\partial_xp=0,\\
 &\partial_zp=g\beta\theta,\\
 &\partial_t\theta+v\partial_x\theta+w\partial_z\theta+\gamma w=0,\\
 &\partial_xv+\partial_zw=0.
\end{aligned}
\right.
\end{equation}
The background density decreases with height, and \(N^2:=g\beta\gamma>0\) is the squared buoyancy frequency. Strict stratification of the perturbed state means that \(\gamma+\partial_z\theta>0\).

We denote by \(\langle\cdot\rangle_z\), \(\langle\cdot\rangle_x\), and
\(\langle\cdot\rangle\) the normalized vertical, horizontal, and spatial
averages, respectively, as defined in Subsection~\ref{subsec:2.1}.
The periodicity of \(p\) requires that
\(\langle\theta\rangle_z=0\) for every \(x\).
Incompressibility and the vertically averaged momentum equation imply
that \(\langle v\rangle_z\) is constant in space and time, and a
Galilean transformation reduces it to zero. The preservation of the
temperature constraint then determines the full vertical velocity:
\[
 w=w^0+\langle w\rangle_z,
 \qquad w^0=-\partial_z^{-1}\partial_xv,
 \qquad \langle w\rangle_z=-\frac1\gamma\partial_x\langle v\theta\rangle_z.
\]
Here \(\partial_z^{-1}\) is the periodic primitive with zero
vertical mean. The pressure is recovered from \eqref{eq:2.2.2} under
the normalization \(\langle p\rangle=0\). Neither \(w\) nor \(p\) is
prescribed independently at the initial time.

Global strong solutions are known for the two-dimensional constant-density hydrostatic Navier--Stokes equations in basins of positive depth \cite{BreschKazhikhovLemoine2005}, as well as for the three-dimensional primitive equations with full viscosity and thermal diffusion \cite{CaoTiti2007}. An \(L^p\)-theory for the isothermal system was developed in \cite{HieberKashiwabara2016}. In the presence of horizontal viscosity, global well-posedness also holds with horizontal thermal diffusion for compatible \(H^1\cap L^\infty\) data satisfying \(\partial_zv_0\in L^p\) for some \(p>2\) \cite{CaoLiTiti2017}, and with vertical thermal diffusion under anisotropic regularity assumptions \cite{CaoLiTiti2020}. A regularized Gent--McWilliams--Redi model with nonlinear anisotropic temperature and salinity diffusion was treated in \cite{KornTiti2024}. For an affine thermal equilibrium in a periodic channel with full diffusion and fixed wall temperatures, Jiang and Wang \cite[Theorems~1.1--1.2]{JiangWang2025} proved \(H^2\) exponential decay below a critical temperature difference and \(H^1\) asymptotic stability at the threshold for compatible \(H^2\) perturbations. These temperature-coupled theories rely on scalar diffusion in their estimates.

The hydrostatic theory is also sensitive to the background flow
structure. Renardy \cite{Renardy2009} proved the ill-posedness of the
hydrostatic Euler equations linearized about certain shear flows,
whereas Masmoudi and Wong \cite{MasmoudiWong2012} established local
Sobolev well-posedness under the Rayleigh condition. Bianchini and
Duch\^ene \cite{BianchiniDuchene2024} developed a Sobolev theory and
derived the hydrostatic limit for stably stratified fluids with
isopycnal diffusivity and no momentum viscosity. Bianchini et al.\
\cite{BianchiniCotiZelatiErtzbischoff2025} constructed unstable
stratified shear flows violating the Miles--Howard criterion and proved
both the nonlinear ill-posedness of the hydrostatic Euler--Boussinesq
equations and the failure of the hydrostatic limit near these profiles.

The non-hydrostatic Boussinesq equations (BE) serve as a natural
comparison for relaxation in the absence of scalar diffusion. On
\(\mathbb R^2\), global well-posedness in suitable functional classes is
known under full viscosity \cite{HouLi2005,Chae2006}, horizontal
viscosity \cite{DanchinPaicu2011}, and critical fractional velocity
dissipation \cite{HmidiKeraaniRousset2010}. On smooth bounded domains
with no-slip velocity boundary conditions, regularity in integer Sobolev
spaces of equal order persists over finite time intervals under the
required compatibility conditions \cite{AydinJayanti2025}. Uniform
scalar-gradient bounds, however, need not hold: smooth viscous solutions
on \(\mathbb R^2\) and \(\mathbb T^2\) may exhibit unbounded \(L^2\)
norms of the scalar gradient along sequences of times tending to
infinity \cite{KiselevParkYao2025}. Near Couette flow on
\(\mathbb T\times\mathbb R\), mixing yields global viscous stability for
small, suitably localized Gevrey perturbations
\cite{MasmoudiSaidHouariZhao2022}. In the inviscid case with Richardson
number greater than \(\frac14\), nonlinear damping of nonzero horizontal
modes and shear-buoyancy growth for nondegenerate Gevrey data persist up
to times of order \(\varepsilon^{-2}\) for small perturbations of size
\(\varepsilon\) \cite{BedrossianBianchiniCotiZelatiDolce2023}. Control
over this timescale near a stratified rest state on \(\mathbb R^2\) was
established for small high-order Sobolev data with additional
anisotropic localization and vector-field bounds
\cite{JurjaWidmayer2026}.

Without background shear, velocity decay and \(H^{-1}\) convergence
of the hydrostatic imbalance to zero were obtained on rectangles
and a class of bounded domains with impermeable, zero-vorticity
boundaries \cite{DoeringWuZhaoZheng2018}, and for periodic
perturbations of an affine stratified state
\cite[Theorem~1.4]{TaoWuZhaoZheng2020}.  In the periodic case,
mean-zero, divergence-free \(H^2\) velocity and \(L^2\cap L^\infty\)
scalar data suffice, without smallness.  Further scalar asymptotics
and algebraic velocity decay use additional Wiener and time-weighted
bounds, respectively \cite[Theorems~1.5--1.6]{TaoWuZhaoZheng2020}.
Small-data stability on the infinite strip \(\mathbb R\times(0,1)\)
and slab \(\mathbb R^2\times(0,1)\) was proved under high-order Sobolev,
localization, and boundary compatibility assumptions
\cite{DongSun2022,DongSun2024}.  For bounded domains with Navier-slip
conditions, velocity decay and weak \(L^2\) convergence of the
hydrostatic imbalance to zero were proved in
\cite{BleitnerCarlsonNobili2025}, with strong hydrostatic balance
conditional on strong \(L^2\) scalar convergence.

A closer quantitative comparison is given by Jang and Kim
\cite{JangKim2023}.  In a two-dimensional horizontally periodic
stress-free strip, small \(H^\ell\) data for the full velocity and
scalar, with integer \(\ell\ge5\),
yield global stability and full-scalar \(H^q\) decay to a stratified
profile at rate \((1+t)^{-\frac{\ell-q}{4}}\), \(0\le q\le\ell\).
Their theorem assumes a mean-zero, divergence-free initial velocity
and higher-order wall compatibility, including zero scalar
perturbation at the walls.  The sharpness lower bounds concern the
linearized system \cite[Proposition~7.1]{JangKim2023}, and the authors
observe that the method also applies to periodic boxes.
For Stokes--transport in no-slip periodic channels, the full scalar
converges in \(L^2\) at rate \((1+t)^{-1}\) to its decreasing
rearrangement from small \(H^6\cap H_0^2\) perturbations of an affine
density \cite{DalibardGuillodLeblond2025}, or small
\(H^4\cap H_0^2\) perturbations of \(H^4\) stratified profiles with
uniformly negative vertical derivative
\cite[Theorem~1.6]{Park2025Stokes}.  For viscous BE in a no-slip
periodic channel, Park \cite[Theorem~1.3]{Park2024} obtained the
same scalar rate and rearrangement limit, together with velocity
\(L^2\) decay at that rate, for small \(H^4\cap H_0^2\) perturbations
of profiles whose vertical derivative belongs to \(H^4\) and is
uniformly negative.  His argument uses the potential-energy
characterization of the rearranged density.

For PE, global control must overcome hydrostatic derivative loss
without temperature diffusion.  In \eqref{eq:1.1.1}, the vertical
velocity has no independent momentum energy, and
\((v_0,\theta_0)\in H^s\times H^s\) generally gives only
\(w_0\in H^{s-1}\).  Periodic compatibility further imposes
\[
 \partial_x^s\langle w\rangle_z
 =-\frac1\gamma\partial_x^{s+1}\langle v\theta\rangle_z.
\]
Estimating this mean separately would cost an additional temperature
derivative, so the energy estimates must retain the full diagnostic
velocity and use the coupling.  The possible nonzero temperature
limit also requires terminal-state selection beyond decay of the
horizontal oscillations.  The local Hadamard theory for compatible
data at every integer \(s\ge4\), without smallness, is established
in the companion paper \cite{PaperI}.

We prove that, for every integer \(s\ge4\), small compatible data
generate global solutions with uniform \(H^s\) bounds for the
horizontal velocity and temperature and persistent strict stratification.
The estimates retain the nonlinear diagnostic mean without an
additional initial temperature derivative.  The terminal temperature
is determined by a normalized periodic rearrangement of the initial
label.  Its explicit inverse-map formula yields a quadratic expansion
with a cubic remainder.  An exact energy-difference identity
characterizes the equilibrium as the unique minimizer of the
quadratic perturbation energy among compatible, strictly stratified
states with the same periodic label distribution.  We also establish
decay of the full temperature difference, both velocity components,
and the pressure difference in the stated lower-order Sobolev ranges.
Exact nonlinear solutions establish optimality for uniform estimates
over the small-data class.

Throughout the paper, the parameters
\(g,\beta,\gamma,\mu_x,\mu_z,L_x,L_z\) are fixed and positive.
We use the periodic Sobolev spaces \(H^q=H^q(\Omega)\) and write
\(\|\cdot\|_2=\|\cdot\|_{L^2(\Omega)}\).  Functions of \(z\) are
identified with their constant extensions in \(x\) when norms on
\(\Omega\) are used.  Set
\[
 R:=\gamma+\partial_z\theta,
 \qquad \Theta:=\theta-\langle\theta\rangle_x.
\]
For data at order \(s\), write
\(\varepsilon:=\|v_0\|_{H^s}+\|\theta_0\|_{H^s}\) and define the
viscous dissipation by
\begin{equation}\label{eq:1.2.1}
 \mathcal D_s(t):=\mu_x\|\partial_xv(t)\|_{H^s}^2
                 +\mu_z\|\partial_zv(t)\|_{H^s}^2.
\end{equation}
For integers \(n\ge0\), the full-derivative pairing and norm are
\begin{equation}\label{eq:1.2.2}
 \langle f,g\rangle_{H^n,\mathrm d}
 :=\sum_{|\alpha|\le n}\int_\Omega
      \partial^\alpha f\,\overline{\partial^\alpha g}\,\mathrm dx\,\mathrm dz,
 \qquad
 \|f\|_{H^n,\mathrm d}^2:=\langle f,f\rangle_{H^n,\mathrm d}.
\end{equation}
Here \(\partial^\alpha=\partial_x^{\alpha_1}\partial_z^{\alpha_2}\),
and the overline denotes complex conjugation.  This norm is equivalent
to the Fourier Sobolev norm, but is kept distinct in exact differentiated
energy identities.
For Fourier frequencies \(k\in\frac{2\pi}{L_x}\mathbb Z\) and
\(m\in\frac{2\pi}{L_z}\mathbb Z\), define the nonnegative multiplier
\begin{equation}\label{eq:1.2.3}
 \widehat{\mathcal Mf}(k,m):=\rho(k,m)\widehat f(k,m),
 \qquad
 \rho(k,m):=\begin{cases}
 \dfrac{N^2k^2}{(\mu_xk^2+\mu_zm^2)m^2},&m\ne0,\\[0.3em]
 0,&m=0.
 \end{cases}
\end{equation}
The hat denotes a Fourier coefficient.  For \(\alpha>0\),
\(\mathcal M^\alpha\) has symbol \(\rho^\alpha\), which vanishes
when \(k=0\) or \(m=0\).  In particular, the temperature dissipation
below does not control the horizontal mean.
The three theorems concern global stability, terminal-state selection,
and sharp decay, respectively.

\begin{theorem}[Global stability and stratification]
\label{thm:1.1}
Let \(s\ge4\) be an integer.  There exist constants
\(\varepsilon_{\mathrm g}>0\) and \(C>0\), depending only on \(s\),
the fixed physical parameters, and the periods, such that every
real-valued initial datum
\((v_0,\theta_0)\in H^s(\Omega)\times H^s(\Omega)\) satisfying
\[
 \langle v_0\rangle_z=\langle \theta_0\rangle_z=0,
 \qquad
 \gamma+\partial_z\theta_0\ge\frac\gamma2,
 \qquad
 \varepsilon\le\varepsilon_{\mathrm g}
\]
gives rise to a unique global \(H^s\)-strong solution of
\eqref{eq:1.1.1}, with the pressure normalized by
\(\langle p\rangle=0\).  This solution has
the following properties:
\begin{enumerate}
\item[(i)]
The solution satisfies
\begin{equation}\label{eq:1.2.4}
 \inf_{t\ge0}\inf_\Omega
     \bigl(\gamma+\partial_z\theta(t)\bigr)\ge\frac\gamma2,
\end{equation}
\begin{equation}\label{eq:1.2.5}
 \sup_{t\ge0}\bigl(\|v(t)\|_{H^s}^2+\|\theta(t)\|_{H^s}^2\bigr)+\int_0^\infty
    \left(\mathcal D_s(t)
          +\|\mathcal M^{\frac12}\Theta(t)\|_{H^s,\mathrm d}^2\right)
          \,\mathrm dt
 \le C\varepsilon^2,
\end{equation}
and
\begin{equation}\label{eq:1.2.6}
 \int_0^\infty
 \left(\|\nabla v(t)\|_{L^\infty}
       +\|w^0(t)\|_{W^{2,\infty}}
       +\|\langle w\rangle_z(t)\|_{W_x^{2,\infty}}\right)\,\mathrm dt
 \le C\varepsilon.
\end{equation}
In particular,
\(v,\theta\in C([0,\infty);H^s)\) and
\(v\in L^2(0,\infty;H^{s+1})\).

\item[(ii)]
There exists a unique limiting profile \(S_\infty\in H_z^s\) with
zero vertical mean such that
\begin{equation}\label{eq:1.2.7}
p_\infty:=g\beta\partial_z^{-1}S_\infty,\qquad \|S_\infty\|_{H_z^s}\le C\varepsilon,
 \qquad
 \gamma+\partial_zS_\infty(z)\ge\frac\gamma2,
 \qquad z\in\mathbb T_z,
\end{equation}
and
\begin{equation}\label{eq:1.2.8}
 \|v(t)\|_{H^s}+\|w(t)\|_{H^{s-1}}
   \longrightarrow0,
 \qquad t\to\infty.
\end{equation}
The limiting temperature admits the following explicit expression:
\begin{equation}\label{eq:1.2.9}
S_\infty
=\langle\theta_0\rangle_x-\partial_z\int_0^\infty
\langle w(t)\Theta(t)\rangle_x,\mathrm dt
\quad\text{in }H_z^{s-3}.
\end{equation}
where the time integral converges absolutely in \(H_z^{s-2}\), and
\begin{equation}\label{eq:1.2.10}
 \|S_\infty-\langle\theta_0\rangle_x\|_{H_z^{s-3}}\le C\varepsilon^2.
\end{equation}
For every \(0<\delta\le s\), we have
\begin{equation}\label{eq:1.2.11}
 \|\theta(t)-S_\infty\|_{H^{s-\delta}}
 +\|p(t)-p_\infty\|_{H^{s-\delta}}
 \longrightarrow0,
 \qquad t\to\infty.
\end{equation}
Here, \(p_\infty\in H_z^{s+1}\), and
\((0,0,S_\infty,p_\infty)\) is a compatible hydrostatic equilibrium.
\end{enumerate}
\end{theorem}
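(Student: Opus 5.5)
The plan is a continuity (bootstrap) argument built on the local Hadamard theory of the companion paper \cite{PaperI}. That theory gives, for compatible data at every integer \(s\ge4\), a unique local \(H^s\)-strong solution together with a blow-up/continuation criterion. On a maximal interval \([0,T^*)\) we assume the a priori bounds
\[
\sup_{[0,T]}\bigl(\|v\|_{H^s}^2+\|\theta\|_{H^s}^2\bigr)\le 2C_0\varepsilon^2,
\qquad \inf_{[0,T]\times\Omega}(\gamma+\partial_z\theta)\ge\tfrac\gamma4,
\]
and show that they improve to \(C_0\varepsilon^2\) and \(\gamma/2\) when \(\varepsilon\) is small. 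This yields global existence, and uniqueness is inherited from the local theory.

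\textbf{Energy functional.} The central object is a weighted energy of the form
\[
E_s=\|v\|_{H^s,\mathrm d}^2+\Bigl\|\tfrac{g\beta}{\sqrt{R}}\,\Theta\Bigr\|_{H^s,\mathrm d}^2\ (\text{schematically}) .
\]
The weight is chosen so that the linear coupling \(\partial_xp\leftrightarrow\gamma w\) cancels exactly in the top-order identity: testing the \(v\)-equation against \(\partial^\alpha v\) and the \(\theta\)-equation against \(\frac{g\beta}{R}\partial^\alpha\theta\) uses the full stratification \(R=\gamma+\partial_z\theta\) rather than \(\gamma\). The term \(w\partial_z\partial^\alpha\theta+\gamma\partial^\alpha w\) is treated as \(R\,\partial^\alpha w\) plus commutators. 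Keeping the full diagnostic velocity \(w\), including the mean \(\langle w\rangle_z=-\gamma^{-1}\partial_x\langle v\theta\rangle_z\), is what makes the cancellation exact: the worst term \(\partial^\alpha w\cdot\partial_z\theta\), which carries \(s+1\) derivatives on \(v\), is absorbed into \(R\,\partial^\alpha w\). It then pairs with \(\partial_x\partial^\alpha p=g\beta\,\partial_x\partial_z^{-1}\partial^\alpha\theta\) and integrates to zero by incompressibility.

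\textbf{Commutators and temperature dissipation.} The remaining commutators are bounded by \(\|(\nabla v,w^0,\langle w\rangle_z)\|_{W^{\cdot,\infty}}\,E_s\) plus terms \(\lesssim\varepsilon\,\mathcal D_s\). This gives
\[
\frac{d}{dt}E_s+c\,\mathcal D_s\le C\bigl(\|\nabla v\|_{L^\infty}+\|w^0\|_{W^{2,\infty}}+\|\langle w\rangle_z\|_{W_x^{2,\infty}}\bigr)E_s+C\sqrt{E_s}\,\mathcal D_s .
\]
To produce the \(\mathcal M^{1/2}\Theta\) dissipation we add a small multiple of the cross term \(\langle v,\mathcal A\Theta\rangle_{H^s,\mathrm d}\), where \(\mathcal A\) is the Fourier multiplier \(\partial_x\partial_z^{-1}(\mu_x\partial_{xx}+\mu_z\partial_{zz})^{-1}\) adapted to \(\rho\). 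Differentiating this cross term yields \(-c\|\mathcal M^{1/2}\Theta\|_{H^s,\mathrm d}^2\) (the spectral content of \eqref{eq:1.2.3}), plus terms controlled by \(\mathcal D_s\) and cubic errors. Because the operator has order zero in the relevant regime, no derivatives are lost.

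\textbf{Time integrability \eqref{eq:1.2.6}.} This is the main obstacle, since \(\mathcal D_s\) is only \(L^1\) in time, not \(L^{1/2}\). I would estimate \(\|\nabla v\|_{L^\infty}+\|w^0\|_{W^{2,\infty}}\lesssim\|v\|_{H^{4}}\) using \(s\ge4\) and Sobolev embedding. Then I would prove algebraic decay \(\|v(t)\|_{H^{4}}\lesssim\varepsilon(1+t)^{-1-\kappa}\) by interpolating between the lower-order decay coming from the dissipation (Poincaré in \(v\), using \(\langle v\rangle_z=0\)) and the uniform high norm. If the interpolation is not strong enough at \(s=4\), I would instead use a time-weighted energy \((1+t)^{a}E_{s'}\) at a lower level \(s'\) to integrate \(\|v\|_{W^{1,\infty}}\). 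The mean \(\langle w\rangle_z\) is quadratic, so it is controlled by \(\|v\|_{H^3}\|\theta\|_{H^4}\) and integrated the same way. Closing the bootstrap: Grönwall with the integrable coefficient bounded by \(C\varepsilon\) gives \(E_s\le C\varepsilon^2\). The stratification bound follows from transport along the flow,
\[
\partial_tR+v\partial_xR+w\partial_zR=-R\,\partial_zw-\partial_zv\,\partial_x\theta ,
\]
whose right side is \(L^1_t L^\infty\)-small. This gives \eqref{eq:1.2.4} from \(\gamma+\partial_z\theta_0\ge\gamma/2\) up to an \(O(\varepsilon)\) correction, which needs a slightly larger initial margin or a refined sign argument. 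This is the delicate point if one insists on exactly \(\gamma/2\).

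\textbf{Part (ii).} Averaging the \(\theta\)-equation in \(x\) gives
\[
\partial_t\langle\theta\rangle_x=-\partial_z\langle w\Theta\rangle_x,
\]
using \(\partial_xv+\partial_zw=0\) and \(\langle w\rangle_x=0\) (a consequence of periodicity). Integrating in time yields \eqref{eq:1.2.9}. Absolute convergence in \(H_z^{s-2}\) follows from \(\int\|w\|_{H^{s-1}}\|\Theta\|_{H^{s-1}}\,dt\), bounded via Cauchy–Schwarz by the integrals of \(\mathcal D_s\) and \(\|\mathcal M^{1/2}\Theta\|^2\), up to a low-frequency loss handled by the lower-order decay. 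This also gives \eqref{eq:1.2.10}. Decay \eqref{eq:1.2.8} of \(v\) follows from integrability of \(\mathcal D_s\) together with the bound on \(\frac{d}{dt}\|v\|^2\). For \(\Theta\), I would show \(\Theta\to0\) in low norms using the \(\mathcal M\)-dissipation and the equation, interpolating with the uniform \(H^s\) bound to obtain \eqref{eq:1.2.11}; the pressure limit follows from \(\partial_zp=g\beta\theta\). The \(H^s\) bound on \(S_\infty\) and the bound \(\gamma+\partial_zS_\infty\ge\gamma/2\) follow by weak lower semicontinuity and pointwise limits.
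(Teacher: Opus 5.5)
Your weighted energy (testing with $g\beta R^{-1}\partial^\alpha\theta$) and the cross term $\langle\mathcal L^{-1}\mathcal Tv,\Theta\rangle_{H^s,\mathrm d}$ are the right ingredients; they match $E_2$ and $X$ in the paper. The genuine gap is in the step you single out yourself, the time integrability \eqref{eq:1.2.6}.

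You reduce $\|\nabla v\|_{L^\infty}+\|w^0\|_{W^{2,\infty}}$ to an isotropic Sobolev norm of $v$ and then try to prove $\|v(t)\|_{H^4}\lesssim\varepsilon(1+t)^{-1-\kappa}$. This cannot work. At $s=4$ there is nothing above $H^4$ to interpolate with, and embedding would even require $v\in H^{4+\delta}$ to control $w^0$ in $W^{2,\infty}$. More decisively, the claimed decay is false uniformly over the small-data class. For the exact plane waves of Proposition~\ref{prop:6.1} with $s=4$ one has $\|v_n(t)\|_{H^4}=\frac{|h_n|}{1+|h_n|}\eta e^{\lambda_nt}$, with $|h_n|\sim m_n^{-3}$ and $|\lambda_n|\sim m_n^{-4}$. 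Hence $\|v_n(t_n)\|_{H^4}\sim\eta(1+t_n)^{-3/4}$. More generally, $\int_0^\infty\|v_n\|_{H^q}\,\mathrm dt\sim\eta m_n^{q-3}\to\infty$ for every $q>3$, whereas $\int_0^\infty\|w_n^0\|_{W^{2,\infty}}\,\mathrm dt\sim\eta m_n^{-2}$. The factor $k/m$ in $w^0=-\partial_z^{-1}\partial_xv$ and the slow-mode structure are exactly what isotropic embedding throws away.

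Your fallback does not rescue this. Since $\lambda_+\sim-N^2k^2/(\mu_zm^4)$, there is no spectral gap, so lower-order norms decay only through interpolation with the top norm. That decay in turn presupposes $\int_0^T\mathcal C_v\,\mathrm dt\lesssim\varepsilon$ in the Gronwall step, so a sequential argument cannot close.

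The missing mechanism is the fast/slow decomposition of Lemma~\ref{lem:3.3}, used inside a joint bootstrap. On $\Sigma_2^x$ one writes $\widehat{w^0}=-\frac{\lambda_+}{\gamma}\widehat\Theta-\frac km\widehat v_{\mathrm f}$.
\begin{itemize}
\item The fast part is damped at rate $a/2$, so its $\mathcal W^2$ time integral is bounded by the data and the integrated sources, as in \eqref{eq:4.42}.
\item The slow part is comparable to $\mathcal M\Theta$ and is integrated via \eqref{eq:4.47}--\eqref{eq:4.49}. The decay $\|\mathcal M\Theta\|_2\lesssim\varepsilon(1+t)^{-2}$, obtained from the spectral energy $E_{5,0}$ of Proposition~\ref{prop:4.2} and a nonlinear comparison ODE, is interpolated against the $L^2_t$ bound on $\|\mathcal M^{1/2}\Theta\|_{H^4,\mathrm d}$.
\item The compatible mean is handled by \eqref{eq:4.43}. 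Its worst part, $\|v\|_{H^5}\|\mathcal M^{1/2}\Theta\|_{H^4,\mathrm d}$, lies in $L^1_t$ by Cauchy--Schwarz.
\end{itemize}
The energy bound, $J(T)$, and the decay of $E_{5,0}$, $\|w^0\|_2$ and $\|v\|_{\mathcal W^0}$ are closed simultaneously in \eqref{eq:4.50}. The last of these feeds the forcing of the spectral ODE.

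There are also corrections to your energy step and part (ii).
\begin{itemize}
\item Your commutator bound misses the horizontal endpoint $\alpha=(s,0)$ of $[\partial^\alpha,w]\partial_zv$. There $\partial_x^s\langle w\rangle_z=-\gamma^{-1}\partial_x^{s+1}\langle v\theta\rangle_z$ costs $s+1$ derivatives of $\theta$, so it is not bounded by $E_s$ and $\mathcal D_s$. It needs the horizontal compensation \eqref{eq:4.28}--\eqref{eq:4.29}, which moves one $x$-derivative onto $\langle\partial_zv\,\partial_x^sv\rangle_z$.
\item Terms such as $(\partial^\alpha v)\partial_x\Theta$ are only bounded by $E_1^{1/2}\mathcal D_s^{1/2}\|\mathcal M^{1/2}\Theta\|_{H^s,\mathrm d}$. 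Your differential inequality must carry this term, and it closes only after the cross term is added.
\item The stratification is not delicate. Your own bootstrap gives $\|\partial_z\theta\|_{L^\infty}\le C\|\theta\|_{H^3}\le C\varepsilon\le\gamma/4$, hence $R\ge3\gamma/4$ directly, as in \eqref{eq:4.55}; no transport argument is needed.
\item In part (ii), estimate the flux with $\|\Theta\|_{H^{s-2}}\lesssim\|\mathcal M^{1/2}\Theta\|_{H^s}$. This is a two-derivative vertical (high-frequency) loss, not a low-frequency one. That is why the flux converges in $H_z^{s-2}$ and \eqref{eq:1.2.9} holds in $H_z^{s-3}$.
\end{itemize}
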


\begin{remark}
The limiting total temperature is given by
\[
 T_\infty(z)=T_s(z)+S_\infty(z)=T_0+\gamma z+S_\infty(z),
 \qquad T_\infty'(z)\ge\frac{\gamma}{2}>0.
\]
Thus $T_\infty(z)$ increases with height, but its vertical
gradient need not be constant.
Since \(S_\infty\) is periodic with zero mean, \(T_\infty\) is affine
if and only if \(S_\infty\equiv0\), in which case \(T_\infty=T_s\).
\end{remark}

We next identify the terminal profile from the initial temperature and
quantify its difference from the initial horizontal mean.

\begin{theorem}[Terminal profile and expansion]
\label{thm:1.2}
Assume the hypotheses of Theorem~\ref{thm:1.1}, and let
\(S_\infty\) and \(p_\infty\) be the limiting profiles given there.
The following assertions hold. All constants depend only on
\(s\), the fixed physical parameters, and the periods.
\begin{enumerate}
\item[(i)] \emph{Selection of the terminal profile.}
The limiting temperature is determined by the initial temperature as
follows. Define
\begin{equation}\label{eq:1.2.12}
 \Phi_0(x,z):=z+\frac{\theta_0(x,z)}{\gamma},
 \quad
 \Psi_0(x,\eta):=\Phi_0(x,\cdot)^{-1}(\eta),
 \quad
 \Psi_*(\eta):=\frac{1}{L_x}\int_0^{L_x}\Psi_0(x,\eta)\,\mathrm dx.
\end{equation}
Here the inverses are taken on \(\mathbb R\), with the lift of
\(\Phi_0\) fixed by the displayed definition, and the horizontal average
is taken at fixed \(\eta\). Then \(\Psi_*\) is a strictly increasing
\(C^1\) diffeomorphism of \(\mathbb R\), and
\begin{equation}\label{eq:1.2.13}
 S_\infty(z)=\gamma\bigl(\Psi_*^{-1}(z)-z\bigr).
\end{equation}
Differentiating this identity yields
\begin{equation}\label{eq:1.2.14}
 \gamma+\partial_zS_\infty(z)
   =\frac{\gamma}{\Psi_*'(\Psi_*^{-1}(z))},
\end{equation}
and therefore
\begin{equation}\label{eq:1.2.15}
 \min_\Omega R(0)
 \le\gamma+\partial_zS_\infty(z)
 \le\max_\Omega R(0)
 \qquad (z\in\mathbb T_z).
\end{equation}
\item[(ii)] \emph{Quadratic expansion.}
Set \(\Theta_0:=\theta_0-\langle\theta_0\rangle_x\). The quadratic
correction \(-\frac1{2\gamma}\partial_z\langle\Theta_0^2\rangle_x\)
satisfies
\begin{equation}\label{eq:1.2.16}
 \left\|\frac1{2\gamma}\partial_z\langle\Theta_0^2\rangle_x\right\|_{H_z^{s-1}}
 \le C\|\theta_0\|_{H^s}^2.
\end{equation}
It has zero vertical mean, and moreover
\begin{equation}\label{eq:1.2.17}
\begin{aligned}
 &\left\|S_\infty-\langle\theta_0\rangle_x
       +\frac1{2\gamma}\partial_z\langle\Theta_0^2\rangle_x\right\|_{H_z^{s-3}}
 +\left\|p_\infty-g\beta\partial_z^{-1}
       \left(\langle\theta_0\rangle_x
             -\frac1{2\gamma}\partial_z\langle\Theta_0^2\rangle_x\right)\right\|_{H_z^{s-2}}\\
 \le{}& C\|\theta_0\|_{H^s}^3\le C\varepsilon^3.
\end{aligned}
\end{equation}
For the linear solution with initial data
\((\Pi_{\ne0}^xv_0,\Theta_0)\) and
\(w^{\mathrm{lin}}=-\partial_z^{-1}\partial_xv^{\mathrm{lin}}\),
where \(\Pi_{\ne0}^x\) denotes the projection defined in \eqref{eee},
the same correction admits the representation
\begin{equation}\label{eq:1.2.18}
 -\frac1{2\gamma}\partial_z\langle\Theta_0^2\rangle_x=-\partial_z\int_0^\infty
       \langle w^{\mathrm{lin}}(t)\Theta^{\mathrm{lin}}(t)\rangle_x
       \,\mathrm dt
 \quad\text{in }H_z^{s-3}.
\end{equation}
The time integral converges absolutely in \(H_z^{s-2}\).
\end{enumerate}
\end{theorem}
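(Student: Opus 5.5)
We organize the proof of Theorem~\ref{thm:1.2} around the Lagrangian label \(\Phi(t,x,z):=z+\theta(t,x,z)/\gamma\). By the temperature equation in \eqref{eq:1.1.1}, \(\Phi\) is transported: \(\partial_t\Phi+v\partial_x\Phi+w\partial_z\Phi=0\). Since \(\partial_z\Phi=R/\gamma\ge\frac12\) by \eqref{eq:1.2.4}, each \(\Phi(t,x,\cdot)\) is a lift of a degree-one circle diffeomorphism with \(\Phi(z+L_z)=\Phi(z)+L_z\). Its inverse \(\Psi(t,x,\eta)\) is then well defined on \(\mathbb R\), and \(\Psi(t,x,\eta)-\eta\) is \(L_z\)-periodic in \(\eta\).

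\textbf{Step 1: the conservation law.} We show that \(\Psi_*(t,\eta):=\langle\Psi(t,\cdot,\eta)\rangle_x\) is independent of \(t\). Differentiate the identity \(\Phi(t,x,\Psi)=\eta\) in \(t\) and in \(x\). This gives
\[
\partial_t\Psi=w-v\,\partial_x\Psi
\]
evaluated on the level set, that is, \(\partial_t\Psi+v(x,\Psi)\partial_x\Psi=w(x,\Psi)\). Incompressibility states that the flux of \((v,w)\) through the graph \(z=\Psi(x,\eta)\) has zero horizontal average. Concretely, with \(\Upsilon(x,\zeta):=\int_0^\zeta v(x,z')\,\mathrm dz'\), we have \(\partial_x[\Upsilon(x,\Psi(x))]=\int_0^{\Psi}\partial_xv+v(x,\Psi)\partial_x\Psi\). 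Using \(\partial_xv=-\partial_zw\), this equals \(-w(x,\Psi)+w(x,0)+v\partial_x\Psi\).

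The horizontal average of \(w(x,0)\) vanishes: \(\langle w^0\rangle_x\) does, and \(\langle w\rangle_z\) is an \(x\)-derivative. Hence \(\partial_t\Psi_*=-\langle\partial_x[\Upsilon(x,\Psi)]\rangle_x=0\), because \(\Upsilon(x,\Psi)\) is \(x\)-periodic when \(\langle v\rangle_z=0\). This holds for \(H^s\) strong solutions with \(s\ge4\), since all quantities are \(C^1\). Strict monotonicity of \(\Psi_*\) follows from \(\partial_\eta\Psi_0=\gamma/R(0,x,\Psi_0)\in[\gamma/\max R(0),\gamma/\min R(0)]\). Averaging in \(x\) gives \(\Psi_*'\in[\gamma/\max R(0),\gamma/\min R(0)]\), which yields \eqref{eq:1.2.15} once \eqref{eq:1.2.14} is known.

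\textbf{Step 2: passage to the limit.} By \eqref{eq:1.2.11} with \(\delta=s-2\), \(\theta(t)\to S_\infty\) in \(C^1\). Hence \(\Phi(t)\to z+S_\infty/\gamma=:\Phi_\infty(z)\) in \(C^1\), uniformly on \(\Omega\) (Sobolev embedding in 2D). Because \(\partial_z\Phi\ge\frac12\) uniformly, the inverses converge uniformly: \(\Psi(t,x,\eta)\to\Phi_\infty^{-1}(\eta)\). Averaging in \(x\) and using Step 1 gives \(\Psi_*=\Phi_\infty^{-1}\), which is \eqref{eq:1.2.13}. Then \eqref{eq:1.2.14} is the chain rule applied to \(\Phi_\infty=\Psi_*^{-1}\).

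\textbf{Step 3: the quadratic expansion.} Write \(\Psi_0(x,\eta)=\eta-\psi(x,\eta)\). Then \(\psi\) solves \(\psi=\theta_0(x,\eta-\psi)/\gamma\), so \(\psi=\theta_0/\gamma-\theta_0\partial_z\theta_0/\gamma^2+O(\theta_0^3)\), with \(\theta_0\) evaluated at \(\eta\). Averaging gives \(\Psi_*=\eta-a\) with
\[
a:=\langle\theta_0\rangle_x/\gamma-\tfrac1{2\gamma^2}\partial_z\langle\theta_0^2\rangle_x+O(\varepsilon^3).
\]
Inverting, \(\Psi_*^{-1}(z)=z+a(z)+a'(z)a(z)+O(\varepsilon^3)\). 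Multiplying by \(\gamma\) gives
\[
S_\infty=\langle\theta_0\rangle_x-\tfrac1{2\gamma}\partial_z\langle\theta_0^2\rangle_x+\tfrac1{2\gamma}\partial_z\langle\theta_0\rangle_x^2+O(\varepsilon^3).
\]
Since \(\langle\theta_0^2\rangle_x-\langle\theta_0\rangle_x^2=\langle\Theta_0^2\rangle_x\), this is exactly the correction in \eqref{eq:1.2.16}. The bound \eqref{eq:1.2.16} itself is immediate from the algebra property of \(H^{s-1}\). The zero vertical mean holds because the correction is a \(z\)-derivative.

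The main obstacle is making the cubic remainder rigorous in \(H_z^{s-3}\). We plan to use a Taylor formula with integral remainder for the implicit equation, and composition estimates in \(H^{s-1}\) (valid since \(s-1\ge3>1\)); each inversion and composition loses one derivative, which accounts for the index \(s-3\). The pressure statement in \eqref{eq:1.2.17} then follows by applying \(g\beta\partial_z^{-1}\), which gains one derivative and yields \(H_z^{s-2}\).

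\textbf{Step 4: the linear representation \eqref{eq:1.2.18}.} For the linearized system, \(\partial_t\Theta^{\mathrm{lin}}=-\gamma w^{\mathrm{lin}}\), so
\[
\langle w^{\mathrm{lin}}\Theta^{\mathrm{lin}}\rangle_x=-\frac1{2\gamma}\partial_t\langle(\Theta^{\mathrm{lin}})^2\rangle_x.
\]
Integrating in time gives \(\frac1{2\gamma}\bigl(\langle\Theta_0^2\rangle_x-\lim_{t\to\infty}\langle(\Theta^{\mathrm{lin}})^2\rangle_x\bigr)\). The limit vanishes because the linear spectral analysis (the \(\mathcal M\)-dissipation acting on modes with \(k\ne0\)) gives \(\Theta^{\mathrm{lin}}\to0\) in \(H^{s-\delta}\); this uses that \(\Theta_0\) has no \(k=0\) mode, and the \(m=0\) modes vanish since \(\langle\theta_0\rangle_z=0\). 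Absolute convergence in \(H_z^{s-2}\) follows from the \(L^1_t\) bounds on \(w^{\mathrm{lin}}\) in \(W^{2,\infty}\), the linear analogue of \eqref{eq:1.2.6}, combined with uniform \(H^s\) bounds.
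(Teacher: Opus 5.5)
Your proposal follows the paper's proof essentially step for step. It uses the transported label \(\Phi=z+\theta/\gamma\) and conservation of \(\langle\Psi\rangle_x\) via a flux identity; your \(\Upsilon\) plus the boundary term \(w(x,0)\) is equivalent to the paper's periodic stream function \(\psi=\partial_z^{-1}v+\gamma^{-1}\langle v\theta\rangle_z\). It then identifies the limit through uniform convergence of the inverses and applies the second-order inverse-map expansion twice, which is Lemma~\ref{lem:5.1}; there it is applied on slices \(\theta_0(x,\cdot)\in H_z^{s-1}\) uniformly in \(x\), and that is where the index \(s-3\) comes from. Finally it uses the identity \(\partial_t\Theta^{\mathrm{lin}}=-\gamma w^{\mathrm{lin}}\).

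Two small repairs are needed. In Step~2, \(H^2(\Omega)\not\hookrightarrow C^1\) in two dimensions, so take \(\delta=s-3\), or simply note that \(C^0\) convergence together with \(\partial_z\Phi\ge\frac12\) already gives uniform convergence of the inverses. In Step~4, base the absolute convergence on the differentiated decay bounds of Proposition~\ref{prop:3.1}\textup{(ii)}, namely \(\|w^{\mathrm{lin}}(t)\|_{H^{s-2}}\le C\varepsilon(1+t)^{-1}\) and \(\|\Theta^{\mathrm{lin}}(t)\|_{H^{s-2}}\le C\varepsilon(1+t)^{-1/2}\). Do not rely on a linear version of \eqref{eq:1.2.6}: the paper never proves one, and even granted it, together with only uniform \(H^s\) bounds it does not control the terms with more than two derivatives on \(w^{\mathrm{lin}}\) when \(s\ge5\).
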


The terminal label \(\Phi_\infty(z):=z+\gamma^{-1}S_\infty(z)\)
is the increasing rearrangement of \(\Phi_0\) modulo \(L_z\),
normalized by \(\langle S_\infty\rangle_z=0\).
More precisely, for every continuous \(L_z\)-periodic function \(f\),
we have
\[
 \langle f(\Phi_\infty)\rangle_z=\langle f(\Phi_0)\rangle.
\]
Within the class of Theorem~\ref{thm:1.1}, and for fixed
\(\gamma\) and periods, \eqref{eq:1.2.13} determines the terminal
temperature from \(\theta_0\) alone, independently of the initial
velocity and the viscosities.

The quadratic term can produce a nonzero terminal perturbation even
when the initial horizontal mean vanishes.  For example, take
\(v_0=0\) and
\(\theta_0(x,z)=\delta\cos(kx)\sin(mz)\), where
\(k=\frac{2\pi}{L_x}\), \(m=\frac{2\pi}{L_z}\), and
\(|\delta|\) is sufficiently small.  Then \eqref{eq:1.2.17} gives
\[
 \left\|S_\infty+\frac{\delta^2m}{4\gamma}\sin(2mz)\right\|_{H_z^{s-3}}
 \le C|\delta|^3,
\]
with \(C\) independent of \(\delta\).  Thus
\(S_\infty\not\equiv0\) for sufficiently small nonzero \(\delta\).

\begin{remark}
By Proposition~\ref{prop:5.1}, the terminal equilibrium
\((0,S_\infty)\) uniquely minimizes the quadratic perturbation energy
among compatible, strictly stratified \(H^s\times H^s\) states whose
temperature labels have the same distribution modulo \(L_z\) as
\(\Phi_0\).
\end{remark}

We next give the componentwise decay rates, after a possible
reduction of the small-data threshold.

\begin{theorem}[Nonlinear decay]
\label{thm:1.3}
Let \(s\ge4\) be an integer. There exists a constant
\(\varepsilon_{\mathrm d}\in(0,\varepsilon_{\mathrm g}]\), depending
only on \(s\), the fixed physical parameters, and the periods, such that
the following holds.
Assume the hypotheses of Theorem~\ref{thm:1.1}, and suppose that
\(\varepsilon\le\varepsilon_{\mathrm d}\). Let
\((v,w,\theta,p)\) be the corresponding global strong solution, and let
\(S_\infty\) and \(p_\infty\) be its limiting profiles.
For each \(r\), let \(C_r>0\) denote a constant depending only on
\(s,r\), the fixed physical parameters, and the periods. Then the
following estimates hold.
For every \(0\le r<s\) and all \(t\ge0\),
\begin{equation}\label{eq:1.2.19}
 \|\theta(t)-S_\infty\|_{H^r}+\|\Theta(t)\|_{H^r}
 \le C_r\varepsilon(1+t)^{-\frac{s-r}{4}}.
\end{equation}
For every \(0\le r\le\max\{0,s-5\}\) and all \(t\ge0\),
\begin{equation}\label{eq:1.2.20}
 \|v(t)\|_{H^r}\le C_r\varepsilon(1+t)^{-\frac{s-r+3}{4}},
\end{equation}
\begin{equation}\label{eq:1.2.21}
 \|w(t)\|_{H^r}\le C_r\varepsilon(1+t)^{-\frac{s-r+4}{4}},
\end{equation}
\begin{equation}\label{eq:1.2.22}
 \|p(t)-p_\infty\|_{H^r}\le C_r\varepsilon(1+t)^{-\frac{s-r+1}{4}}.
\end{equation}
Moreover, for every \(r\) in the admissible range of the corresponding
estimate, the decay exponent in that estimate is optimal for uniform
estimates over the stated \(H^s\) small-data class. This optimality also
holds for the two temperature norms in \eqref{eq:1.2.19} separately.
\end{theorem}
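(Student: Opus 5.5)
The plan is to combine the uniform \(H^s\) bound \eqref{eq:1.2.5} from Theorem~\ref{thm:1.1} with the anisotropic dissipation \(\mathcal D_s+\|\mathcal M^{1/2}\Theta\|^2_{H^s,\mathrm d}\). From these we interpolate a time-weighted decay hierarchy, in the spirit of Jang--Kim. The mechanism is that the symbol \(\rho(k,m)\) of \(\mathcal M\) degenerates only at large \(|m|\) relative to \(|k|\). For \(k\ne0\) and \(|m|\lesssim\Lambda\) we have \(\rho\gtrsim\Lambda^{-4}\). Hence, for \(\Theta\) bounded in \(H^s\), low frequencies are damped, while high frequencies are small by the factor \(\Lambda^{-(s-r)}\).

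First, at the level \(r\) we derive a lower-order energy identity for the pair \((v,\Theta)\) with weight \(\frac{N^2}{\gamma}\) on the temperature. We add the hypocoercive cross term \(\langle v,\partial_z^{-1}\partial_x^{-1}\partial_z^{-1}\ldots\rangle\) already used in Section 3 to produce \(\|\mathcal M^{1/2}\Theta\|_{H^r}^2\). The nonlinear errors are controlled by \(\varepsilon\) times dissipation, using \eqref{eq:1.2.6}. This gives a Lyapunov functional \(E_r\sim\|v\|_{H^r}^2+\|\Theta\|_{H^r}^2\) with
\[
\frac{\mathrm d}{\mathrm dt}E_r+c\bigl(\|\nabla v\|_{H^r}^2+\|\mathcal M^{1/2}\Theta\|_{H^r}^2\bigr)\le0 .
\]
Next comes the frequency interpolation:
\[
\|\Theta\|_{H^r}^2\lesssim\|\mathcal M^{1/2}\Theta\|_{H^r}^{2\frac{s-r}{s-r+2}}\|\Theta\|_{H^s}^{2\frac{2}{s-r+2}} .
\]
It follows from Hölder in Fourier space, since \(\langle m\rangle^{2r}\le\rho^{a}\langle m\rangle^{2s(1-a)}\ldots\) with the exponent bookkeeping using \(\rho\sim m^{-4}\). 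Combined with the uniform bound \(\|\Theta\|_{H^s}\le C\varepsilon\), this yields a differential inequality \(E_r'\le -cE_r^{1+\frac{2}{s-r}}\varepsilon^{-\frac{4}{s-r}}\). Integrating gives \(E_r\lesssim\varepsilon^2(1+t)^{-\frac{s-r}{2}}\), which is \eqref{eq:1.2.19} for \(\Theta\).

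For \(\theta-S_\infty\) we write \(\langle\theta\rangle_x(t)-S_\infty=\partial_z\int_t^\infty\langle w\Theta\rangle_x\) via \eqref{eq:1.2.9}. Using the faster decay of \(w\), this tail is bounded by \(\varepsilon^2(1+t)^{-\frac{s-r}{4}-\frac{s-r'+4}{4}+1}\), which is better than needed; where the admissible range of \eqref{eq:1.2.21} is too small, we interpolate with the \(H^s\) bound.

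Velocity decays faster because \(v\) is slaved to \(\Theta\) through the nearly quasi-static balance \(-\mu\Delta v\approx-\partial_xp=-g\beta\partial_x\partial_z^{-1}\theta\) and \(\partial_t v\) is small. We would use Duhamel for the Stokes-type linear operator and apply the semigroup decay to the forcing. The gain of \(\frac34\) in exponent (from \(s-r\) to \(s-r+3\)) comes from the operator \(\Delta^{-1}\partial_x\partial_z^{-1}\), which has order \(-3\) in \(m\) on the degenerate region where \(\rho\sim k^2m^{-4}\). The component \(w=-\partial_z^{-1}\partial_xv+\langle w\rangle_z\) gains one further \(m^{-1}\), giving exponent \(s-r+4\). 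The term \(\langle w\rangle_z\) is quadratic and decays faster. For the pressure, \(p-p_\infty=g\beta\partial_z^{-1}(\theta-S_\infty)\) up to normalization, and the extra \(m^{-1}\) gains \(\frac14\). The loss of regularity in \(r\le s-5\) absorbs the derivatives spent on the nonlinear terms \(v\partial_xv\), \(w\partial_zv\) in Duhamel.

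\textbf{Main obstacle.} The difficulty is making the velocity gain rigorous nonlinearly. The forcing must be estimated in a low norm, where \(w\partial_z v\) loses a derivative, using only the \(H^s\) bound. I would handle this by splitting \(v=v_{\mathrm{qs}}+\tilde v\), with \(v_{\mathrm{qs}}:=-(\mu_x\partial_x^2+\mu_z\partial_z^2)^{-1}\partial_x p\). I would then show that \(\tilde v\) decays as \(\partial_t v_{\mathrm{qs}}\), which is \(\mathcal M\Theta\)-type and one order better, plus quadratic terms.

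For optimality, I would use exact nonlinear solutions. Single-mode data \(v_0=0\), \(\theta_0=\delta\cos(kx)\sin(mz)\) with \(k\) fixed and \(m\) large (\(|m|\sim t^{1/4}\)) are exact shear-free linear solutions modulo the self-interaction. One checks that the nonlinear terms vanish for a single Fourier pair, because \(v\partial_x+w\partial_z\) applied to the same mode cancels by incompressibility. This gives the lower bounds matching each exponent, with amplitude \(\delta\sim\varepsilon m^{-s}\) and decay \(e^{-\rho t}\) evaluated at \(t\sim m^4\).
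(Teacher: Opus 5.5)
\textbf{Verdict: there is a genuine gap in the velocity, vertical-velocity and pressure rates, and the optimality example is wrong as stated.}

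Your temperature step matches the paper's in substance. The paper runs the Lyapunov argument only at \(L^2\), puts the weight \((\gamma+S')^{-1}\) on \(\Theta^2\), uses an integrating factor \(e^{-C\varepsilon\int_0^t\mathcal C_v}\) rather than ``\(\le0\)'', and then interpolates with the \(H^s\) bound.

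\textbf{The missing velocity mechanism.} Your fluctuation energy yields only \(\|v\|_{H^q}+\|\Theta\|_{H^q}\lesssim\varepsilon(1+t)^{-(s-q)/4}\), plus time-integrated control of \(\|\mathcal M^{1/2}\Theta\|^2\). The sharp rates need pointwise decay of anisotropic quantities:
\begin{itemize}
\item the slow part of \(v\) is \(h_+\Theta\) with \(|h_+|\lesssim\rho^{3/4}\), and your \(v_{\mathrm{qs}}=-g\beta\mathcal L^{-1}\mathcal T\Theta\) has this symbol to leading order;
\item the slow part of \(w^0\) is \(-\gamma^{-1}\lambda_+\Theta\sim\mathcal M\Theta\);
\item the pressure needs \(\partial_z^{-1}\Theta\), which is controlled through \(|m|^{-1}\lesssim\rho^{1/4}\).
\end{itemize}
No isotropic Sobolev norm of \(\Theta\) gives these at the required rates: for \(|k|\sim|m|\sim M\) one only has \(\rho^{3/4}\sim M^{-3/2}\). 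So ``order \(-3\) in \(m\) on the degenerate region'' remains a linear heuristic.

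Your splitting just moves the question to \(v_{\mathrm{qs}}\), and Duhamel cannot handle the slow part. The slow \(F_2\to v\) kernel, \(\approx\frac{g\beta k}{am}e^{\lambda_+u}\), integrates in time to \(\approx\frac{|m|}{\gamma|k|}\). It therefore costs a full vertical derivative on \(F_2\ni w\partial_z\theta\in H^{s-1}\), which is exactly the loss you flag as the main obstacle.

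What is missing is a second Lyapunov functional one \(\mathcal M\)-level up: the paper's \(E_{5,r}\) for \((\mathcal Mv,\mathcal M\Theta)\) in Proposition~\ref{prop:4.2}.
\begin{itemize}
\item Its dissipation contains \(\|\mathcal L^{1/2}\mathcal Mv\|_{H^r}^2=N^2\|\mathcal M^{1/2}w^0\|_{H^r}^2\).
\item The vertical transport is absorbed into this dissipation through the weighted product bound of Lemma~\ref{lem:4.2}, which rests on \(\rho(\xi)^{1/2}\rho(\eta)^{-1/2}\lesssim\langle\xi-\eta\rangle^2\).
\item Only the horizontal transport \(v\partial_xv,\ v\partial_x\Theta\) remains as forcing, and it decays fast by \eqref{eq:6.9}. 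This step, through Wiener coefficients requiring \(H^{r+5}\), is where \(r\le s-5\) enters.
\item The scalar comparison then gives \(\|\mathcal M\Theta\|_{H^r}\lesssim\varepsilon(1+t)^{-(s-r+4)/4}\).
\item Lemma~\ref{lem:3.3} evaluates the slow part at the current \(\Theta(t)\) and leaves an exponentially damped fast part. A joint bootstrap, with interpolations such as \(\|\mathcal M^{3/4}\Theta\|\le\|\Theta\|^{1/4}\|\mathcal M\Theta\|^{3/4}\), then yields \eqref{eq:1.2.20}--\eqref{eq:1.2.22}.
\end{itemize}

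Your flux-tail bound for \(\theta-S_\infty\) also presupposes this missing decay of \(w\). The paper avoids that dependence with the inverse-map comparison \(\|\theta(t)-S_\infty\|_2\le C\|\Theta(t)\|_2\), which follows from \eqref{eq:5.11} and \eqref{eq:5.7}.

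\textbf{The optimality family.} The datum \(\cos(kx)\sin(mz)=\frac12[\sin(kx+mz)-\sin(kx-mz)]\) is a standing wave, not a single Fourier pair. Incompressibility removes the self-transport of each plane wave, but not the cross terms. Along the flow \(v\propto\sin(kx)\cos(mz)\) and \(w\propto\cos(kx)\sin(mz)\), so \(v\theta_x+w\theta_z\propto\sin(2mz)\ne0\). The paper uses exactly this datum after Theorem~\ref{thm:1.2} to exhibit \(S_\infty\approx-\frac{\delta^2m}{4\gamma}\sin(2mz)\ne0\).

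Use instead a traveling wave \(\theta_0\propto\cos(kx+mz)\), for example with \(v_0=h_+\theta_0\) as in Proposition~\ref{prop:6.1}. Then \(kv+mw=0\), \(\langle v\theta\rangle_z\) is independent of \(x\) so \(\langle w\rangle_z=0\), and \(\langle v^2\rangle_z=\langle v^2\rangle\). The solution is exactly linear, with \(S_\infty=0\) and \(\Theta=\theta\). Your scaling \(\delta\sim\varepsilon m^{-s}\), \(t\sim m^4\) then gives all the lower bounds.
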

\begin{table}[H]
\setlength{\abovecaptionskip}{5pt}
\setlength{\belowcaptionskip}{3pt}
\noindent
Table~\ref{tab:1.1} summarizes selected two-dimensional small-data
asymptotic stability results without scalar diffusion.  The decay
rates concern the full scalar difference from its terminal profile.
\par
\centering
\caption{Stability and decay without scalar diffusion.}
\label{tab:1.1}
\small
\setlength{\tabcolsep}{3.5pt}
\renewcommand{\arraystretch}{1.05}
\begin{tabularx}{\linewidth}{@{}
 >{\raggedright\arraybackslash}p{0.23\linewidth}
 >{\raggedright\arraybackslash}p{0.24\linewidth}
 >{\centering\arraybackslash}p{0.15\linewidth}
 >{\raggedright\arraybackslash}X@{}}
\toprule
\textbf{Model and setting} &
\textbf{Data and control} &
\textbf{Scalar \(L^2\) rate} &
\textbf{Terminal profile}\\
\midrule
PE (this work)\newline periodic box
&
\(H^s\), \(s\ge4\)\newline uniform \(H^s\)
&
\((1+t)^{-\frac{s}{4}}\)\({}^{\dagger}\)
&
Normalized periodic rearrangement\newline
(explicit inverse map)\\
\midrule
BE \cite{JangKim2023}\newline stress-free strip
&
\(H^\ell\), \(\ell\ge5\)\newline uniform \(H^\ell\)
&
\((1+t)^{-\frac{\ell}{4}}\)\({}^{\ddagger}\)
&
Stratified limit\newline (time-integrated flux)\\
\addlinespace[0.3em]
BE \cite{Park2024}\newline no-slip channel
&
\(H^4\cap H_0^2\)\newline uniform \(H^4\)
&
\((1+t)^{-1}\)
&
Decreasing vertical rearrangement\\
\addlinespace[0.3em]
BE \cite{DongSun2022}\newline infinite slip strip
&
\(H^m\times H^{m+1}\), \(m\ge33\)\newline
time-weighted control
&
\((1+t)^{-\frac{1}{4}}\)
&
Prescribed affine background\\
\midrule
Stokes--transport \cite{DalibardGuillodLeblond2025}\newline
no-slip channel
&
\(H^6\cap H_0^2\)\newline uniform \(H^4\)
&
\((1+t)^{-1}\)
&
Decreasing vertical rearrangement\\
\addlinespace[0.3em]
Stokes--transport \cite{Park2025Stokes}\newline
no-slip channel
&
\(H^4\cap H_0^2\)\newline uniform \(H^4\)
&
\((1+t)^{-1}\)
&
Decreasing vertical rearrangement\\
\bottomrule
\end{tabularx}
\par\smallskip
\begin{minipage}{\linewidth}
\footnotesize
Regularity concerns perturbations: horizontal velocity and temperature
for PE, full velocity and scalar for BE, and scalar alone for
Stokes--transport.  In \cite{DongSun2022}, it concerns vorticity and
scalar, also small in \(W^{5,1}\times W^{8,1}\), with scalar decay in
\(H^4\).  Domains are horizontally periodic except
\(\mathbb R\times(0,1)\) in \cite{DongSun2022}.
Data indices are integers; the respective incompressibility,
mean, and boundary conditions are understood.
\(\dagger\): lower bounds from exact nonlinear solutions.
\(\ddagger\): linearized lower bounds.
\end{minipage}
\end{table}

At \(s=4\), the scalar \(L^2\) rate coincides with that obtained in
\cite{Park2024,DalibardGuillodLeblond2025,Park2025Stokes}.
For the PE, achieving this rate requires global
control at the initial Sobolev regularity despite the derivative loss
in the diagnostically determined vertical velocity. The periodic
inverse-map formula further determines the equilibrium directly from
the initial temperature and quantifies its deviation from the initial
horizontal mean. Thus, the present contribution combines global
control of the hydrostatic system, quantitative selection of the
terminal state, and sharp componentwise relaxation.

The hydrostatic constraint creates a derivative loss without an
independent vertical-velocity energy.  In particular, the highest
derivative of the compatible mean cannot be estimated separately
from \(H^s\times H^s\) data.  We use the stratification-weighted
cancellation and horizontal compensation of the local theory
\cite{PaperI}.  Testing the differentiated temperature equation
against \(g\beta R^{-1}\partial^\alpha\theta\), together with the
differentiated momentum energy identity, cancels the principal
hydrostatic coupling.  Horizontal compensation places the remaining
mean derivative on a viscous velocity factor.  A velocity--temperature
cross term supplies the indirect dissipation represented by
\(\mathcal M\), giving a same-order estimate that retains the full
diagnostic velocity.

The passage from local control to global stability requires this
energy estimate and the diagnostic time integral \eqref{eq:1.2.6}
to close simultaneously.  At \(s=4\), \(R^{-1}\in H^3(\Omega)\),
whereas
\[
 H^3(\Omega)\not\hookrightarrow W^{2,\infty}(\Omega).
\]
Mixed-norm pairings and weighted products with \(H^3\) coefficients
replace the missing uniform bounds on second derivatives.
We jointly estimate the high-order energy, spectral decay, and
diagnostic velocity integral on finite intervals of the local
lifespan.  The fast response estimates and weighted interpolation
make the velocity coefficient integrable, with constants independent
of the interval length.  This closes the fourth-order bootstrap and
propagates higher integer regularity without an additional initial
temperature derivative.

The temperature mean is not controlled by the indirect dissipation.
Let \(\Psi(t,x,\eta)\) denote the inverse of
\(z\mapsto z+\gamma^{-1}\theta(t,x,z)\).
The full compatible velocity admits a periodic stream function,
which gives, at fixed \(\eta\),
\[
 \langle\Psi(t,\cdot,\eta)\rangle_x
 =\langle\Psi_0(\cdot,\eta)\rangle_x
 =\Psi_*(\eta).
\]
The mean constraint fixes the lift.  Consequently, the initial
temperature determines the terminal profile through a normalized
periodic rearrangement.  This identity yields the quadratic
correction with a cubic remainder and the constrained energy
characterization.  Comparison of the inverse maps also gives
\[
\|\theta-S_\infty\|_2\le C\|\Theta\|_2,
\]
allowing fluctuation
decay to control the full temperature difference.

For fixed nonzero horizontal frequency, the slow linear decay rate
is of order \(|m|^{-4}\) as the vertical frequency \(|m|\) tends to
infinity, as in the viscous Boussinesq comparison \cite{JangKim2023}.
Thus viscosity does not produce a uniform spectral gap even for
\(\Theta\).  Spectral and forced response estimates retain the
anisotropic multipliers that determine the different velocity rates,
while the pressure is recovered from its diagnostic formula.
Exact nonlinear plane-wave solutions, for which the transport terms
vanish, provide matching lower bounds along increasing vertical
frequencies and corresponding times tending to infinity.
Hence none of the stated exponents can be increased in estimates
uniform over the small-data class.

The paper is organized as follows.  Section~\ref{sec:2} gives the
functional setting, diagnostic formulation, and common estimates.
Section~\ref{sec:3} establishes the linear theory, spectral bounds,
and forced response estimates.  Section~\ref{sec:4} proves global
stability and asymptotic stratification, together with the
integer regularity obstruction for the diagnostic time integral.
Section~\ref{sec:5} identifies the terminal profile, derives its
quadratic expansion, and proves its energy-minimizing property.
Section~\ref{sec:6} establishes the sharp decay estimates.

%============================================================================================================================================================
% End of Section 1
%============================================================================================================================================================

%============================================================================================================================================================
% Section 2: Preliminaries
%============================================================================================================================================================

\section{Preliminaries}\label{sec:2}
We fix the functional setting, derive the diagnostic formulation,
and recall the local solution theory.  We then collect the multiplier
bounds and time estimates shared by the linear and nonlinear proofs.
The estimates specific to global stability and terminal-state selection
are proved in the corresponding sections.

\subsection{Notation and elementary estimates}\label{subsec:2.1}
For \(f=f(x,z,t)\), define
\begin{align*}
  &\langle f\rangle_z(x,t):= \frac{1}{L_z}\int_0^{L_z}f(x,z,t)\,\mathrm dz,\\
  &\langle f\rangle_x(z,t):= \frac{1}{L_x}\int_0^{L_x}f(x,z,t)\,\mathrm dx,\\
  &\langle f\rangle(t):= \frac{1}{L_xL_z}\int_0^{L_x}\int_0^{L_z}
     f(x,z,t)\,\mathrm dz\,\mathrm dx.
\end{align*}

We then define the projections \(\Pi_{\ne0}^z\),
\(\Pi_0^x\) , \(\Pi_{\ne0}^x\) and \(\mathcal Q\) by
\begin{align}\label{eee}
\Pi_{\ne0}^zf=f-\langle f\rangle_z,\quad
\Pi_0^xf=\langle f\rangle_x,\quad
\Pi_{\ne0}^xf=f-\langle f\rangle_x, \quad
\mathcal Q=\Pi_{\ne0}^x\Pi_{\ne0}^z.
\end{align}
Set
\[
 \mathcal K=\frac{2\pi}{L_x}\mathbb Z\times\frac{2\pi}{L_z}\mathbb Z,
 \quad \mathcal K_* =\{(k,m)\in\mathcal K:m\ne0\},
 \quad \mathcal K_*^x=\{(k,m)\in\mathcal K:k,m\ne0\}.
\]
With \(\langle k,m\rangle=(1+k^2+m^2)^{\frac12}\), our Fourier
normalization and Sobolev norm are
\begin{equation*}
  \widehat f(k,m,t)
  :=
  \frac{1}{L_xL_z}
  \int_0^{L_x}\!\int_0^{L_z}
  f(x,z,t)e^{-i(kx+mz)}\,\mathrm{d}z\,\mathrm{d}x,
\end{equation*}

\[
 f(x,z)=\sum_{(k,m)\in\mathcal K}\widehat f(k,m)e^{i(kx+mz)},
 \qquad \|f\|_2^2=L_xL_z\sum_{\mathcal K}|\widehat f|^2
\]
and
\begin{equation*}
  \|f\|_{H^q}^2
  :=L_xL_z\sum_{(k,m)\in\mathcal K}
  \langle k,m\rangle^{2q}|\widehat f(k,m)|^2,
\end{equation*}
where $ q\in\mathbb R$.
The Fourier inner product is linear in its first argument.
For every integer \(n\ge0\), the full-derivative norm and pairing
defined in \eqref{eq:1.2.2} satisfy
\begin{equation}\label{eq:2.1.1}
  c_n\|f\|_{H^n}^2
  \le \|f\|_{H^n,\mathrm d}^2
  \le C_n\|f\|_{H^n}^2
\end{equation}
and
\begin{equation}\label{eq:2.1.2}
  |\langle f,g\rangle_{H^n,\mathrm d}|
  \le C_n\|f\|_{H^{n-1}}\|g\|_{H^{n+1}}.
\end{equation}
These estimates follow from the Fourier representation and
Cauchy--Schwarz.

The last inequality defines the full-derivative pairing on
\(H^{n-1}\times H^{n+1}\).  Exact differentiated energies use this
pairing, whereas interpolation uses the Fourier norm.
Write \(H_x^q=H^q(\mathbb T_x)\) and \(H_z^q=H^q(\mathbb T_z)\).
Functions of one variable are extended constantly when norms on
\(\Omega\) are used.  The anisotropic space
\(H_z^2(\Omega)=L_x^2H_z^2\) has norm
\begin{equation*}
  \|f\|_{H_z^2(\Omega)}^2
  :=L_xL_z\sum_{(k,m)\in\mathcal K}
        (1+m^2)^2|\widehat f(k,m)|^2.
\end{equation*}

On a fixed interval \([0,T]\), we write \(C_tX=C([0,T];X)\),
\(L_t^pX=L^p(0,T;X)\), and \(W_t^{1,p}X=W^{1,p}(0,T;X)\)
for a Banach space \(X\).  The notation \(H_t^1X\) means \(W_t^{1,2}X\).
We set \(\|U\|_{H^q\times H^q}=\|v\|_{H^q}+\|\theta\|_{H^q}\)
for \(U=(v,\theta)\).  Constants \(C,c>0\), including their indexed
variants, depend only on the fixed physical parameters, periods,
and stated regularity indices unless specified otherwise.  They may
change from line to line.  We write \(f\lesssim g\) for \(f\le Cg\).

The operator \(\partial_z^{-1}\) has symbol \((im)^{-1}\) for
\(m\ne0\) and zero for \(m=0\).  Thus
\(\partial_z\partial_z^{-1}=\partial_z^{-1}\partial_z=\Pi_{\ne0}^z\).
For \(q\ge0\),
\begin{equation}\label{eq:2.1.3}
 \|\partial_z^{-1}f\|_{H^q}\le C\|f\|_{H^q},
 \qquad
 \|\partial_z^{-1}h\|_{H_z^{q+1}}\le C\|h\|_{H_z^q}.
\end{equation}
In the second bound \(h=h(z)\), and the inverse removes its constant
mode.  For the dissipation \(\mathcal D_s\) defined in
\eqref{eq:1.2.1}, zero vertical mean and periodic
Poincar\'e give
\begin{equation}\label{eq:2.1.4}
 \mathcal D_s(t)\ge c\|v(t)\|_{H^{s+1}}^2.
\end{equation}

For integer \(s\ge r\ge3\) and \(0\le q\le s\), the Sobolev
product estimate \cite{KatoPonce1988,BahouriCheminDanchin2011} is
\begin{equation}\label{eq:2.1.5}
  \|fg\|_{H^q}
  \le C\bigl(\|f\|_{H^r}\|g\|_{H^q}
             +\|g\|_{H^r}\|f\|_{H^q}\bigr).
\end{equation}
Averaging is bounded in the corresponding Sobolev spaces, and all
four projections are contractions on \(H^q(\Omega)\).

For \(q\ge0\), define the Wiener norm by
\[
 \|f\|_{\mathcal W^q}:=\sum_{(k,m)\in\mathcal K}
        \langle k,m\rangle^q|\widehat f(k,m)|.
\]
The one-dimensional norms \(\mathcal W_x^q\) and \(\mathcal W_z^q\)
use the corresponding normalized Fourier coefficients and are
preserved by constant extension.  Averaging is contractive from
\(\mathcal W^q\) to \(\mathcal W_x^q\) or \(\mathcal W_z^q\),
as appropriate.  Each of the four projections above is also
contractive on \(\mathcal W^q\), since it restricts the Fourier sum.

For \(q,r\ge0\), the weighted convolution inequality gives
\[
 \|fg\|_{\mathcal W^q}
 \le C_q\bigl(\|f\|_{\mathcal W^q}\|g\|_{\mathcal W^0}
             +\|f\|_{\mathcal W^0}\|g\|_{\mathcal W^q}\bigr).
\]
The corresponding Sobolev product bound is
\begin{equation}\label{eq:2.1.6}
 \|fg\|_{H^r}
 \le C_r\bigl(\|f\|_{H^r}\|g\|_{\mathcal W^0}
             +\|f\|_2\|g\|_{\mathcal W^r}\bigr).
\end{equation}
Both estimates hold whenever their right-hand sides are finite and
follow from \(\langle\xi+\eta\rangle^q\le
C_q(\langle\xi\rangle^q+\langle\eta\rangle^q)\) and Young's
convolution inequality.  In particular, \eqref{eq:2.1.6} bounds
multiplication by \(\varphi\in\mathcal W^r\) on \(H^r\) with
operator norm at most \(C_r\|\varphi\|_{\mathcal W^r}\).
The same bound holds from \(H^r\) to \(H_x^r\) after vertical
averaging.

The Wiener norms also control uniform derivatives.  For \(q\ge0\)
and integers \(n\ge0\),
\begin{equation}\label{eq:2.1.7}
 \|f\|_{\mathcal W^q}\le C_q\|f\|_{H^{q+2}},
 \qquad
 \|f\|_{W^{n,\infty}}\le C_n\|f\|_{\mathcal W^n}.
\end{equation}
The first inequality follows from Cauchy--Schwarz and
\(\sum_{\mathcal K}\langle k,m\rangle^{-4}<\infty\), and the
second from absolute convergence of the differentiated Fourier
series.  The product estimates also hold in one dimension, where
one additional Sobolev derivative suffices for the Wiener embedding.
In particular,
\begin{equation}\label{eq:2.1.8}
 \|h\|_{\mathcal W_x^2}\le C\|h\|_{H_x^3},
 \qquad \|h\|_{\mathcal W_z^2}\le C\|h\|_{H_z^3}.
\end{equation}

The mixed-norm estimates used below are
\begin{equation}\label{eq:2.1.9}
 \|fg\|_2\le\|f\|_{L_x^\infty L_z^2}\|g\|_{L_x^2L_z^\infty},
 \qquad
 \|g\|_{L_x^2L_z^\infty}\le C(\|g\|_2+\|g_z\|_2).
\end{equation}
Also \(\|f\|_{L_x^\infty L_z^2}\le C(\|f\|_2+\|f_x\|_2)\).
These follow from H\"older and the one-dimensional Sobolev embedding.

For \(|\alpha|=n\), write
\([\partial^\alpha,f]h=\partial^\alpha(fh)-f\partial^\alpha h\).
For smooth periodic \(f,g\), integer \(n\ge1\), and
\(\ell\in\{x,z\}\),
\begin{equation}\label{eq:2.1.10}
  \|[\partial^\alpha,f]\partial_\ell g\|_2
  \le C_n\left(
       \|\nabla f\|_{L^\infty}\|g\|_{H^n}
       +\|f\|_{H^n}\|\partial_\ell g\|_{L^\infty}\right).
\end{equation}

For \(n\ge2\), the internal terms satisfy
\begin{equation}\label{eq:2.1.11}
  \sum_{0<\eta<\alpha}\binom\alpha\eta
     \|(\partial^\eta f)\partial_\ell\partial^{\alpha-\eta}g\|_2
  \le C_n\left(
       \|\nabla f\|_{L^\infty}\|g\|_{H^n}
       +\|f\|_{H^{n-1}}\|g\|_{W^{2,\infty}}\right).
\end{equation}

Here \(0<\eta<\alpha\) means componentwise inclusion with
\(\eta\ne0,\alpha\).  We use the following consequence of the periodic
Gagliardo--Nirenberg inequality:
\begin{equation*}
  \|(\partial^\eta F)(\partial^\zeta G)\|_2
  \le C_m\left(
        \|F\|_{L^\infty}\|G\|_{H^m}
        +\|F\|_{H^m}\|G\|_{L^\infty}\right),
\end{equation*}

for \(|\eta|+|\zeta|=m\).  It follows from the periodic
Gagliardo--Nirenberg inequality and weighted Young inequality,
including the zero-order cases by H\"older.  For the full
commutator choose \(e_i\le\eta\) and apply this bound to
\(\partial_i f,\partial_\ell g\) with total remaining order \(n-1\).
For the internal terms choose also \(e_j\le\alpha-\eta\) and write
\[
 (\partial^\eta f)\partial_\ell\partial^{\alpha-\eta}g
 =(\partial^{\eta-e_i}\partial_i f)
   (\partial^{\alpha-\eta-e_j}\partial_j\partial_\ell g).
\]
The remaining order is \(n-2\), which proves \eqref{eq:2.1.11}.
These conventions also fix the diagnostic reconstruction used next.

\subsection{Diagnostic formulation and local solutions}\label{subsec:2.2}
Incompressibility and the vertically averaged temperature equation give
\(w^0=-\partial_z^{-1}\partial_xv\) and
\(\gamma\langle w\rangle_z=-\partial_x\langle v\theta\rangle_z\).
Consequently,
\begin{equation}\label{eq:2.2.1}
  w=
  -\partial_z^{-1}\partial_xv
  -\frac{1}{\gamma}\partial_x\langle v\theta\rangle_z.
\end{equation}

The hydrostatic relation and the averaged momentum equation give
\(p=g\beta\partial_z^{-1}\theta+\langle p\rangle_z\) and
\(\partial_x\langle p\rangle_z=-\partial_x\langle v^2\rangle_z\).
The normalization \(\langle p\rangle=0\) therefore yields
\begin{equation}\label{eq:2.2.2}
  p
  =
  g\beta\partial_z^{-1}\theta
  -\langle v^2\rangle_z
  +\langle v^2\rangle.
\end{equation}

We always use the full \(w\) in the nonlinearities
\begin{equation*}
 \mathcal N_1=-(v\partial_xv+w\partial_zv),
 \qquad \mathcal N_2=-(v\partial_x\theta+w\partial_z\theta).
\end{equation*}
They contain both quadratic and cubic terms.  The equivalent projected
system is
\begin{equation}\label{eq:2.2.3}
\left\{
\begin{aligned}
  &\partial_tv
-\mu_x\partial_{xx}v
-\mu_z\partial_{zz}v
+g\beta\partial_x\partial_z^{-1}\theta
=\Pi_{\ne0}^z\mathcal N_1(v,\theta),\\
  &\partial_t\theta
-\gamma\partial_z^{-1}\partial_xv
=\Pi_{\ne0}^z\mathcal N_2(v,\theta),\\
  &(v,\theta)|_{t=0}
=(v_0,\theta_0),
\end{aligned}
\right.
\end{equation}

Indeed, its vertical averages preserve the zero-mean constraints, and
\(\langle\mathcal N_1\rangle_z=-\partial_x\langle v^2\rangle_z\),
\(\langle\mathcal N_2\rangle_z=-\partial_x\langle v\theta\rangle_z\).
The diagnostic formulas then recover all four equations in
\eqref{eq:1.1.1}.  No independent datum for \(w\) or \(p\) is needed.
A useful alternative form of the mean velocity is
\begin{equation}\label{eq:2.2.4}
 \langle w\rangle_z=-\frac1\gamma
       \langle w^0\partial_z\theta+v\partial_x\Theta\rangle_z.
\end{equation}
This follows by integrating \(\partial_xv=-\partial_zw^0\) in the
product \(\partial_x\langle v\theta\rangle_z\), and using
\(\langle v\rangle_z=0\).

The local theory is taken from \cite{PaperI}, whose equations and
pressure normalization agree with ours.  Its vertical average is
written here as \(\langle\cdot\rangle_z\).  For every integer \(s\ge4\)
and real datum \(U_0=(v_0,\theta_0)\in H^s\times H^s\) with zero
vertical means and \(\gamma+\partial_z\theta_0\ge\kappa_0>0\), the
Hadamard theorem in \cite[Subsection~2.2]{PaperI} yields a unique
maximal solution on \([0,T_{\max}^{(s)}(U_0))\).
On every compact subinterval \([0,T]\) of this lifespan, the solution satisfies
\begin{equation}\label{eq:2.2.5}
\begin{alignedat}{2}
 &v\in C([0,T];H^s)\cap L^2(0,T;H^{s+1}),
 &\qquad &\theta\in C([0,T];H^s),\\
 &\partial_tv\in L^2(0,T;H^{s-1}),
 &\qquad &\partial_t\theta\in C([0,T];H^{s-1}).
\end{alignedat}
\end{equation}
The equations hold almost everywhere, the mean constraints persist,
and \(R\) has a positive lower bound on \([0,T]\times\Omega\).
The diagnostic variables satisfy \(w\in C_tH^{s-1}\) and
\(p\in C_tH^s\).  Maximality is understood in this strictly
stratified strong-solution class.

The quantitative local bounds in \cite[Subsection~4.1]{PaperI} give a
common existence time, norm and velocity-dissipation bounds, and a
positive stratification lower bound when the initial \(H^s\times H^s\)
norm and the reciprocal stratification are bounded.  The continuation
criterion in \cite[Subsection~4.2]{PaperI}, with
\(T_{\max}=T_{\max}^{(s)}(U_0)\), is
\begin{equation}\label{eq:2.2.6}
 T_{\max}<\infty\quad\Longrightarrow\quad
 \limsup_{t\to T_{\max}^-}
 \left(\|v(t)\|_{H^s}+\|\theta(t)\|_{H^s}
                       +\|R(t)^{-1}\|_{L^\infty}\right)=\infty.
\end{equation}

We also use \cite[Subsections~5.1 and~5.3]{PaperI}.  If compatible
data \(U_{0,n}\to U_0\) in \(H^s\times H^s\), they are strictly
stratified for all large \(n\).  Their solutions exist on every fixed
\([0,T]\subset[0,T_{\max}^{(s)}(U_0))\) for all large \(n\) and
converge there in \(C_t(H^s\times H^s)\).  In particular,
\(T_{\max}^{(s)}(U_0)\le\liminf_nT_{\max}^{(s)}(U_{0,n})\).
For integers \(\sigma\ge s\), solutions from the same
\(H^\sigma\times H^\sigma\) datum agree on their common lifespan.
If an \(H^s\)-strong solution obeys
\(\sup_{t\le T}\|U(t)\|_{H^s\times H^s}\le M\) and \(R\ge\kappa>0\),
then its higher initial regularity persists on \([0,T]\), with
\[
 \sup_{t\le T}\|U(t)\|_{H^\sigma\times H^\sigma}^2
 +\int_0^T\|v(t)\|_{H^{\sigma+1}}^2\,\mathrm dt
 \le Ce^{CT}\|U_0\|_{H^\sigma\times H^\sigma}^2.
\]
Here \(C\) depends only on \(s,\sigma,M,\kappa\) and the fixed
parameters, not on the higher-order norms.  This finite-time bound
is not asserted to be uniform as \(T\to\infty\).

The diagnostic formulas and \eqref{eq:2.1.5} imply, for \(s\ge4\),
\begin{equation}\label{eq:2.2.7}
\begin{aligned}
 &\|w^0\|_{H^{s-1}}\le C\|v\|_{H^s},\\
 &\|w^0\|_{H^s}\le C\|v\|_{H^{s+1}},\\
 &\|w\|_{H^{s-1}}\le C(1+\|\theta\|_{H^s})\|v\|_{H^s},\\
 &\|p\|_{H^s}\le C(\|\theta\|_{H^s}+\|v\|_{H^s}^2).
\end{aligned}
\end{equation}
On bounded subsets of \(H^s\times H^s\),
\begin{equation}\label{eq:2.2.8}
 \|w_1-w_2\|_{H^{s-1}}+\|p_1-p_2\|_{H^s}
 +\sum_{i=1}^2\|\mathcal N_i(U_1)-\mathcal N_i(U_2)\|_{H^{s-1}}
 \le C_M\|U_1-U_2\|_{H^s\times H^s}.
\end{equation}
For example, write \(v_1\theta_1-v_2\theta_2
=(v_1-v_2)\theta_1+v_2(\theta_1-\theta_2)\) and apply the
product estimate.  The other terms use the algebra property of \(H^{s-1}\).

For every compatible strong solution and \(0\le t_1\le t_2\le T\),
\begin{equation}\label{eq:2.2.9}
 \|v(t_2)\|_2^2+\frac{g\beta}{\gamma}\|\theta(t_2)\|_2^2
 +2\int_{t_1}^{t_2}\mathcal D_0(\tau)\,\mathrm d\tau
 =\|v(t_1)\|_2^2+\frac{g\beta}{\gamma}\|\theta(t_1)\|_2^2.
\end{equation}
Here \(\mathcal D_0\) is \eqref{eq:1.2.1} at order zero.
The squared norms are absolutely continuous by
\eqref{eq:2.2.5}.  Incompressibility cancels transport,
and \(\int p_xv=-g\beta\int\theta w\) cancels the temperature coupling.
Equation~\eqref{eq:2.2.1} also yields \(\langle w\rangle_x=0\).
Thus \(\langle w\theta\rangle_x=\langle w\Theta\rangle_x\), and
\begin{equation}\label{eq:2.2.10}
 \partial_t\langle\theta\rangle_x=-\partial_z\langle w\Theta\rangle_x,
 \qquad
 \int_0^{L_z}\langle\theta(t)\rangle_x(z)\,\mathrm dz=0.
\end{equation}

The closure separates the viscous evolution from the hydrostatic
coupling.  We next record the corresponding multiplier bounds and
the time identities used to estimate this coupling.

\subsection{Hydrostatic multipliers and time estimates}
On the zero-vertical-mean subspace, define
\begin{equation*}
  \mathcal L:=-\mu_x\partial_{xx}-\mu_z\partial_{zz},
  \qquad
  \mathcal T:=\partial_x\partial_z^{-1}.
\end{equation*}
Write \(a(k,m):=\mu_xk^2+\mu_zm^2\) for the symbol of
\(\mathcal L\).  The multiplier \(\mathcal M\) and its symbol
\(\rho\) are defined in \eqref{eq:1.2.3}.
Comparison of the symbols gives
\(\mathcal M=N^2\mathcal L^{-1}\mathcal T^2\).

The operators \(\mathcal L,\mathcal T\) are self-adjoint on their
maximal Fourier-multiplier domains.  The operator \(\mathcal L\) has domain
\(H^2\cap\{\langle f\rangle_z=0\}\) and is strictly positive.
The domain of \(\mathcal T\) is determined by
\(\sum_{\mathcal K_*}k^2m^{-2}|\widehat f|^2<\infty\).
Powers of \(\mathcal L\) have symbol \(a^\alpha\) on \(m\ne0\).
Inverse powers, \(\mathcal T\), and their bounded compositions are
extended by zero at \(m=0\).  A bounded composition always means
its Fourier extension, even when its individual factors do not act
on all of \(L^2\).
The symbol in \eqref{eq:1.2.3} is bounded,
so \(\mathcal M\) is a bounded nonnegative operator on \(L^2\).
Its zero-mode convention gives \(\mathcal M\Pi_0^x=0\) and
\(\mathcal M^\alpha f=\mathcal M^\alpha\mathcal Qf\) for \(\alpha>0\).
These operators commute with derivatives and projections.

On \(\mathcal K_*\), \(c\langle k,m\rangle^2\le a\le C\langle k,m\rangle^2\).
For \(k,m\ne0\),
\begin{equation}\label{eq:2.3.1}
  \frac{c_0}{(1+m^2)^2}
  \le
  \rho(k,m)
  \le
  C_0,
\end{equation}
\begin{equation}\label{eq:2.3.2}
  \frac{N^2k^2}{a(k,m)^2m^2}
  \le C\rho(k,m),
  \qquad
  \frac{|k|}{a(k,m)|m|}
  \le C\rho(k,m)^{\frac{1}{2}},
  \qquad
  \frac{k^2}{a(k,m)m^2}=\frac{1}{N^2}\rho(k,m).
\end{equation}
Indeed, \(\rho=N^2(\mu_xm^2+\mu_zm^4k^{-2})^{-1}\), and nonzero
lattice frequencies are bounded away from zero.  For \(q\ge0\),
zero-vertical-mean \(f\), and finite right-hand sides,
\begin{equation}\label{eq:2.3.3}
  \|\mathcal L^{-1}f\|_{H^{q+2}}
  +\|\mathcal L^{-1}\mathcal T f\|_{H^{q+1}}
  \le C\|f\|_{H^q},
\end{equation}
\begin{equation}\label{eq:2.3.4}
  c\|f\|_{H^{q+1}}^2
  \le\|\mathcal L^{\frac{1}{2}}f\|_{H^q}^2
  \le C\|f\|_{H^{q+1}}^2.
\end{equation}
\begin{equation}\label{eq:2.3.5}
  \|\mathcal L^{-1}\mathcal T f\|_{H^{q+1}}
  \le C\|\mathcal M^{\frac{1}{2}}f\|_{H^q},
  \qquad
  \|\mathcal T f\|_{H^q}
  \le C\|\mathcal L^{\frac{1}{2}}f\|_{H^q}.
\end{equation}
For \(q\ge2\) and \(\mathcal Qf=f\),
\begin{equation}\label{eq:2.3.6}
  \|f\|_{H^{q-2}}
  +\|\partial_xf\|_{H^{q-2}}
  +\|\mathcal T f\|_{H^{q-1}}
  \le C\|\mathcal M^{\frac{1}{2}}f\|_{H^q}.
\end{equation}
In particular, if \(n\ge2\) is an integer, \(\alpha_1\ge1\), and
\(\alpha_1+\alpha_2\le n-1\),
\begin{equation}\label{eq:2.3.7}
  \|\partial_x^{\alpha_1}\partial_z^{\alpha_2}f\|_2
  \le C_n\|\mathcal M^{\frac{1}{2}}f\|_{H^n}.
\end{equation}
The coupling satisfies, for \(f\in H^{q+1}\) and \(g\in H^q\),
\begin{equation}\label{eq:2.3.8}
  |\langle\mathcal T f,g\rangle_{H^q}|
  \le\frac1N
  \|\mathcal L^{\frac{1}{2}}f\|_{H^q}
  \|\mathcal M^{\frac{1}{2}}g\|_{H^q}.
\end{equation}
These bounds follow from Parseval and the symbol comparisons
\[
 \frac{\langle k,m\rangle^2k^2}{a^2m^2}\le C\rho,
 \quad \frac{1+k^2}{\rho\langle k,m\rangle^4}\le C,
 \quad \frac{k^2}{m^2\rho\langle k,m\rangle^2}\le C,
 \quad \left|\frac{k}{m}\right|=\frac{a^{\frac12}\rho^{\frac12}}N.
\]
The comparisons containing \(\rho^{-1}\) are used only on
\(\mathcal K_*^x\).  For integer orders the same estimates hold
in the full-derivative norms and pairings, by summing over the
multi-indices.  In \eqref{eq:2.3.8} this preserves the factor \(N^{-1}\).

The preceding bounds determine the dual spaces for the energy
pairings.  We record their time-integrated form.  For \(n\ge0\), \(\ell\in\{0,1\}\), and
\[
 f\in C([0,T];H^n)\cap L^2(0,T;H^{n+\ell}),
 \qquad f_t\in L^2(0,T;H^{n-\ell}),
\]
the squared norm is absolutely continuous and satisfies
\begin{equation}\label{eq:2.3.9}
 \|f(t_2)\|_{H^n}^2-\|f(t_1)\|_{H^n}^2
 =2\operatorname{Re}\int_{t_1}^{t_2}
       \langle\partial_tf(t),f(t)\rangle_{H^n}\,\mathrm dt.
\end{equation}
The Fourier pairing extends to \(H^{n-\ell}\times H^{n+\ell}\).
Fourier truncation converges in the two indicated time spaces,
so the pairings converge in \(L^1\), proving the formula.
At integer orders the same argument applies to the full-derivative pairing.
If these hypotheses hold on \([0,\infty)\), the squared norm and its
derivative are integrable.  Its limit is therefore zero, and
\begin{equation}\label{eq:2.3.10}
 \|f(t)\|_{H^n}^2
 \le
 2\left(\int_t^\infty
     \|\partial_\tau f(\tau)\|_{H^{n-\ell}}^2\,\mathrm d\tau\right)^{\frac12}
  \left(\int_t^\infty
     \|f(\tau)\|_{H^{n+\ell}}^2\,\mathrm d\tau\right)^{\frac12}.
\end{equation}

For the coupled equations, let \(u,f\) have zero vertical means and
satisfy, in the sense of distributions,
\begin{equation}\label{eq:2.3.11}
 u_t+\mathcal Lu+g\beta\mathcal Tf=F_1,
 \qquad f_t-\gamma\mathcal Tu=F_2.
\end{equation}
For \(q\ge0\), set
\(X_q=\operatorname{Re}\langle\mathcal L^{-1}\mathcal Tu,f\rangle_{H^q}\).
Then
\begin{equation}\label{eq:2.3.12}
\begin{aligned}
 X_q'+\frac1\gamma\|\mathcal M^{\frac12}f\|_{H^q}^2
 ={}&-\operatorname{Re}\langle\mathcal Tu,f\rangle_{H^q}
     +\gamma\|\mathcal L^{-\frac12}\mathcal Tu\|_{H^q}^2\\
 &+\operatorname{Re}\langle F_1,\mathcal L^{-1}\mathcal Tf\rangle_{H^q}
  +\operatorname{Re}\langle F_2,\mathcal L^{-1}\mathcal Tu\rangle_{H^q}.
\end{aligned}
\end{equation}
This follows by differentiation and self-adjointness.  It remains
valid, with \(X_q\in W^{1,1}(0,T)\), if
\(u,f\in C([0,T];H^q)\), \(u\in L^2(0,T;H^{q+1})\), and
\(F_i\in L^2(0,T;H^{q-1})\): the multipliers in
\eqref{eq:2.3.3} map the other factors into \(L^2H^{q+1}\), and
Fourier truncation passes every pairing in \(L^1\).
The corresponding full-derivative identity holds at integer orders.
The estimate \eqref{eq:2.3.3}, initially stated for nonnegative
orders, also holds for every real order by the same symbol comparison.
For smooth pairs one also has
\begin{equation}\label{eq:2.3.13}
 \frac12\frac{\mathrm d}{\mathrm dt}
   \left(\|u\|_{H^q}^2+\frac{g\beta}{\gamma}\|f\|_{H^q}^2\right)
 +\|\mathcal L^{\frac12}u\|_{H^q}^2
 =\operatorname{Re}\langle F_1,u\rangle_{H^q}
  +\frac{g\beta}{\gamma}\operatorname{Re}\langle F_2,f\rangle_{H^q}.
\end{equation}
Unlike the cross identity, extension of this formula requires both
source pairings to be defined.  In the nonlinear spectral applications
below, the unknowns have the additional regularity needed for
\eqref{eq:2.3.9}.  For the homogeneous linear system, the energy
identity follows by Fourier truncation, as proved in Section~\ref{sec:3}.

Two elementary time estimates will be used in the bootstrap arguments.
Let \(\omega\ge0\).
If \(H\ge0\) and
\(\int_0^\infty H(u)(1+u)^\omega\,\mathrm du<\infty\), then
\begin{equation}\label{eq:2.3.14}
 \sup_{0\le t\le T}(1+t)^\omega
       \int_0^tH(t-\tau)|F(\tau)|\,\mathrm d\tau
 \le \left(\int_0^\infty H(u)(1+u)^\omega\,\mathrm du\right)
       \sup_{0\le\tau\le T}(1+\tau)^\omega|F(\tau)|.
\end{equation}
Indeed, \(1+t\le(1+t-\tau)(1+\tau)\).  This applies to both
exponential kernels and \((1+u^{-\frac12})e^{-cu}\).

For the scalar comparison, let \(C_0,F_0\ge0\), and let \(y\ge0\) be absolutely continuous
on \([0,T]\), with \(y(0)\le C_0\varepsilon^2\).  Suppose that
\begin{equation}\label{eq:2.3.15}
 y'+c\varepsilon^{-\frac2\alpha}y^{1+\frac1\alpha}
 \le F_0\varepsilon^4(1+t)^{-\beta},
 \qquad \alpha,c,\varepsilon>0,\quad \beta\ge\alpha+1.
\end{equation}
Choose \(K\ge C_0\) so that \(cK^{1+\frac1\alpha}-\alpha K\ge1\).
If \(F_0\varepsilon^2\le1\), comparison with
\(K\varepsilon^2(1+t)^{-\alpha}\) gives
\begin{equation}\label{eq:2.3.16}
 y(t)\le K\varepsilon^2(1+t)^{-\alpha}.
\end{equation}
The choice of \(K\) is independent of \(F_0\).

%============================================================================================================================================================
% End of Section 2
%============================================================================================================================================================

%=====================================================================
% Section 3. Linear analysis
%=====================================================================
\section{Linear analysis}\label{sec:3}
We establish well-posedness for the linearized system and derive
uniform bounds for its spectral kernels.  These bounds yield Sobolev
convergence and algebraic decay.  The forced response estimates in
Subsection~\ref{subsec:3.2} are used in the nonlinear analysis of
Section~\ref{sec:4}.

\subsection{Linear well-posedness and spectral estimates}\label{subsec:3.1}
The homogeneous linearized system is
\begin{equation}\label{eq:3.1.1}
\left\{
\begin{aligned}
 &\partial_tv+\mathcal Lv+g\beta\mathcal T\theta=0,\\
 &\partial_t\theta-\gamma\mathcal Tv=0,\\
 &(v,\theta)|_{t=0}=(v_0,\theta_0).
\end{aligned}
\right.
\end{equation}
The linear diagnostic variables are \(w=-\mathcal Tv\) and
\(p=g\beta\partial_z^{-1}\theta\), with \(\langle p\rangle_z=0\).
For \((k,m)\in\mathcal K_*\), the Fourier matrix is
\[
 \mathcal A(k,m)=\begin{pmatrix}
 -a&-\dfrac{g\beta k}{m}\\[0.4em]
 \dfrac{\gamma k}{m}&0
 \end{pmatrix},
 \qquad \widehat U=\binom{\widehat v}{\widehat\theta}.
\]
The modal energy identity yields global well-posedness and the
following energy equality.

\begin{lemma}\label{lem:3.1}
Let \(s\ge0\), and let
\((v_0,\theta_0)\in H^s(\Omega)\times H^s(\Omega)\) satisfy
\(\langle v_0\rangle_z=\langle\theta_0\rangle_z=0\).
Then \eqref{eq:3.1.1} admits a unique global solution satisfying,
for every \(T>0\),
\[
 v\in C([0,T];H^s)\cap L^2(0,T;H^{s+1}),
 \qquad \theta\in C([0,T];H^s),
\]
\[
 \partial_tv\in L^2(0,T;H^{s-1}),
 \qquad \partial_t\theta\in L^2(0,T;H^s).
\]
The zero vertical means are preserved.  For
\(0\le t_1\le t_2\le T\),
\begin{equation}\label{eq:3.1.2}
 \|v(t_2)\|_{H^s}^2+\frac{g\beta}{\gamma}\|\theta(t_2)\|_{H^s}^2
 +2\int_{t_1}^{t_2}\mathcal D_s(\tau)\,\mathrm d\tau
 =\|v(t_1)\|_{H^s}^2+\frac{g\beta}{\gamma}\|\theta(t_1)\|_{H^s}^2.
\end{equation}
Here \(\mathcal D_s\) is defined by \eqref{eq:1.2.1} along
this linear solution and equals
\(\|\mathcal L^{\frac12}v\|_{H^s}^2\).
The diagnostic variables are uniquely determined under the stated
normalization and satisfy
\[
 w\in C([0,T];H^{s-1})\cap L^2(0,T;H^s),
 \qquad p\in C([0,T];H^s).
\]
The maps \((v_0,\theta_0)\mapsto(v,\theta)\) and
\((v_0,\theta_0)\mapsto v\) are continuous into
\(C([0,T];H^s\times H^s)\) and \(L^2(0,T;H^{s+1})\), respectively.
\end{lemma}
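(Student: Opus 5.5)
The plan is to solve \eqref{eq:3.1.1} mode by mode in Fourier variables and reassemble. For \((k,m)\in\mathcal K_*\), set \(\widehat U(t)=e^{t\mathcal A(k,m)}\widehat U_0(k,m)\). For \(m=0\), the zero-mean hypothesis gives \(\widehat U_0=0\), and we set \(\widehat U(t)=0\); this is what preserves the vertical means. Uniqueness also follows modewise: any distributional solution in the stated class has Fourier coefficients that are absolutely continuous and solve the ODE \(\widehat U'=\mathcal A\widehat U\), so they coincide with the matrix exponential.

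The key modal identity uses the weighted form \(|\widehat v|^2+\frac{g\beta}{\gamma}|\widehat\theta|^2\). Since \(\mathcal A\) is skew in the off-diagonal entries with respect to this weight, because \(\frac{g\beta}{\gamma}\cdot\frac{\gamma k}{m}=\frac{g\beta k}{m}\), we get
\[
 \frac{\mathrm d}{\mathrm dt}\Bigl(|\widehat v|^2+\tfrac{g\beta}{\gamma}|\widehat\theta|^2\Bigr)=-2a(k,m)|\widehat v|^2 .
\]
Multiplying by \(L_xL_z\langle k,m\rangle^{2s}\), integrating over \([t_1,t_2]\), and summing over \(\mathcal K_*\) gives \eqref{eq:3.1.2}. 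Here the monotone convergence theorem justifies the interchange, since every modal quantity is nonnegative. The same computation identifies \(\mathcal D_s\) with \(\|\mathcal L^{\frac12}v\|_{H^s}^2\), as \(a\langle k,m\rangle^{2s}\) is exactly the symbol of \(\mu_x\|\partial_x\cdot\|_{H^s}^2+\mu_z\|\partial_z\cdot\|_{H^s}^2\). This yields the bounds \(v\in L^\infty H^s\cap L^2H^{s+1}\) and \(\theta\in L^\infty H^s\) with constants independent of \(T\).

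For continuity in time, I would use the uniform modal bound \(|\widehat U(t)|\le C|\widehat U_0|\) together with the continuity of each mode, and conclude by dominated convergence on the series. The time derivatives are read off from the equations. For \(v\), \(\|\mathcal Lv\|_{H^{s-1}}\lesssim\|v\|_{H^{s+1}}\in L^2_t\). For the coupling terms I would use \eqref{eq:2.3.5}: \(\|\mathcal T\theta\|_{H^{s-1}}\lesssim\|\theta\|_{H^s}\) because \(|k/m|\le C\langle k,m\rangle\), and \(\|\mathcal Tv\|_{H^s}\le C\|\mathcal L^{\frac12}v\|_{H^s}\in L^2_t\). These give \(\partial_tv\in L^2H^{s-1}\) and \(\partial_t\theta\in L^2H^s\).

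For the diagnostic variables, \(w=-\mathcal Tv\) lies in \(C_tH^{s-1}\) by the first bound and in \(L^2H^s\) by the second. The pressure \(p=g\beta\partial_z^{-1}\theta\) lies in \(C_tH^s\) by \eqref{eq:2.1.3}. Both are uniquely fixed because \(\langle p\rangle_z=0\) is imposed and \(\partial_z^{-1}\) is canonical. Continuous dependence follows from linearity and \eqref{eq:3.1.2} applied to differences of solutions, which bounds \(\sup_t\|U\|_{H^s}^2+\int_0^T\|v\|_{H^{s+1}}^2\) by the \(H^s\) norm of the data.

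The main point requiring care is the continuity \(v,\theta\in C_tH^s\) and the justification of \eqref{eq:3.1.2} at low regularity (\(s\ge0\) real), where the pairing formula \eqref{eq:2.3.9} is not directly available. I would handle this by Fourier truncation. Partial sums are smooth solutions for which the identity is exact, and they converge in \(C_tH^s\) by the uniform tail bound
\[
 \sup_t\sum_{|k|+|m|>K}\langle k,m\rangle^{2s}|\widehat U(t)|^2\le C\sum_{|k|+|m|>K}\langle k,m\rangle^{2s}|\widehat U_0|^2\to0 ,
\]
and in \(L^2H^{s+1}\) for \(v\) by the dissipation term applied to the tails.
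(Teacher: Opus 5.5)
Your proposal is correct and follows essentially the same route as the paper: you solve modewise with the matrix exponential and use the weighted modal identity $\frac{\mathrm d}{\mathrm dt}(|\widehat v|^2+\frac{g\beta}{\gamma}|\widehat\theta|^2)+2a|\widehat v|^2=0$ to get the energy equality. You then bound the time derivatives and diagnostics by multiplier estimates, and apply the same identity to differences for uniqueness and continuous dependence. The only organizational difference is that you define the solution directly on all modes and sum via Tonelli, whereas the paper builds it as a Cauchy limit of Fourier truncations in $C_tH^s\cap L_t^2H^{s+1}$. Your last paragraph supplies that construction as well, and it conveniently settles strong measurability into $H^{s+1}$.
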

\begin{proof}
Let \(P_J\) be the Fourier projection onto \(|(k,m)|\le J\).
Define \(U_J\) by
\[
 \widehat U_J(k,m,t)=e^{\mathcal A(k,m)t}\widehat{P_JU_0}(k,m)\quad(m\ne0),
 \qquad \widehat U_J(k,0,t)=0.
\]
These finite sums preserve the zero vertical means and, for real
initial data, the symmetry
\(\widehat U_J(-k,-m,t)=\overline{\widehat U_J(k,m,t)}\).
The coupling terms cancel in each mode:
\[
 \frac{\mathrm d}{\mathrm dt}
 \left(|\widehat v_J|^2+\frac{g\beta}{\gamma}|\widehat\theta_J|^2\right)
 +2a|\widehat v_J|^2=0.
\]
After multiplication by \(L_xL_z\langle k,m\rangle^{2s}\) and
summation, \eqref{eq:2.3.4} applied to differences yields
\[
\begin{aligned}
 &\|v_J-v_K\|_{C_tH^s}^2+\|\theta_J-\theta_K\|_{C_tH^s}^2
   +\|v_J-v_K\|_{L_t^2H^{s+1}}^2\\
 \le{}&C\left(\|(P_J-P_K)v_0\|_{H^s}^2
                  +\|(P_J-P_K)\theta_0\|_{H^s}^2\right)
 \longrightarrow0.
\end{aligned}
\]
Thus, for each \(T>0\), there are \(v,\theta\) such that
\[
 \|v_J-v\|_{C_tH^s}+\|\theta_J-\theta\|_{C_tH^s}
 +\|v_J-v\|_{L_t^2H^{s+1}}\longrightarrow0.
\]
Applying the multiplier bounds to the difference equations, we obtain
\[
 \|\partial_t(v_J-v_K)\|_{L_t^2H^{s-1}}
 \le C\bigl(\|v_J-v_K\|_{L_t^2H^{s+1}}
           +T^{\frac12}\|\theta_J-\theta_K\|_{C_tH^s}\bigr)
\]
and
\[
 \|\partial_t(\theta_J-\theta_K)\|_{L_t^2H^s}
 \le C\|v_J-v_K\|_{L_t^2H^{s+1}}.
\]
Consequently the time derivatives converge in the stated spaces,
and the limit satisfies \eqref{eq:3.1.1} in distributions with
initial value \(U_0\).  The diagnostic formulas imply
\[
 \|w\|_{C_tH^{s-1}}+\|p\|_{C_tH^s}
 \le C\bigl(\|v\|_{C_tH^s}+\|\theta\|_{C_tH^s}\bigr),
 \qquad \|w\|_{L_t^2H^s}\le C\|v\|_{L_t^2H^{s+1}}.
\]
All multiplier bounds remain valid at \(s=0\), with \(H^{-1}\)
in the indicated spaces.  Strong convergence in \(C_tH^s\) and
\(L_t^2H^{s+1}\) passes the finite-sum energy identity to
\eqref{eq:3.1.2}.  Applying the same modal identity to the
difference of two solutions proves uniqueness and continuous
dependence.  Since \(T>0\) is arbitrary, uniqueness identifies the
solutions on overlapping intervals and yields the global solution.
\end{proof}

For the decay estimates, we now compute the matrix exponential.
The characteristic equation and its roots are
\[
 \lambda^2+a\lambda+N^2\frac{k^2}{m^2}=0,
 \qquad
 \lambda_\pm=-\frac a2\pm\frac12\sqrt{a^2-4N^2\frac{k^2}{m^2}}.
\]
For negative discriminant, choose the square root so that
\(\lambda_+=-\frac a2+i\omega\), with \(\omega>0\).  Define
\begin{equation}\label{eq:3.1.3}
 K_1(t):=\frac{e^{\lambda_+t}-e^{\lambda_-t}}{\lambda_+-\lambda_-},
 \quad K_2(t):=\frac{\lambda_+e^{\lambda_+t}-\lambda_-e^{\lambda_-t}}
                      {\lambda_+-\lambda_-},
 \quad K_3(t):=\frac{\lambda_+e^{\lambda_-t}-\lambda_-e^{\lambda_+t}}
                      {\lambda_+-\lambda_-}.
\end{equation}
For the forced system \eqref{eq:2.3.11}, with
\((u,f)=(v,\theta)\) and zero-vertical-mean sources, set
\(\widehat F=(\widehat F_1,\widehat F_2)^{\mathsf T}\).
Variation of constants gives
\begin{equation}\label{eq:3.1.4}
 \widehat U(t)=e^{\mathcal At}\widehat U_0
 +\int_0^t e^{\mathcal A(t-\tau)}\widehat F(\tau)\,\mathrm d\tau,
 \qquad
 e^{\mathcal At}=\begin{pmatrix}
 K_2&-\dfrac{g\beta k}{m}K_1\\[0.4em]
 \dfrac{\gamma k}{m}K_1&K_3
 \end{pmatrix}.
\end{equation}
The kernels satisfy
\begin{equation}\label{eq:3.1.5}
 K_2=\lambda_+K_1+e^{\lambda_-t},
 \qquad K_3=-\lambda_-K_1+e^{\lambda_-t}.
\end{equation}
At \(k=0,m\ne0\), they reduce to
\[
 K_1(t)=\frac{1-e^{-at}}a,
 \qquad K_2(t)=e^{-at},\qquad K_3(t)=1.
\]
Thus the horizontal temperature mean is stationary in the homogeneous
system.  With forcing, it instead satisfies
\(\partial_t\langle\theta\rangle_x=\langle F_2\rangle_x\).
At a repeated root, the continuous extensions are
\[
 K_1(t)=te^{-\frac{at}{2}},\qquad
 K_2(t)=\left(1-\frac{at}{2}\right)e^{-\frac{at}{2}},\qquad
 K_3(t)=\left(1+\frac{at}{2}\right)e^{-\frac{at}{2}}.
\]
In particular, \eqref{eq:3.1.4} remains valid without root separation.

The homogeneous solution operator is uniformly bounded on the
zero-vertical-mean subspace of \(H^s\times H^s\), by
\eqref{eq:3.1.2}.  For zero initial data and
\(F_1,F_2\in L^1(0,T;H^s)\) with zero vertical means, the Duhamel
solution therefore satisfies
\begin{equation}\label{eq:3.1.6}
 \sup_{0\le t\le T}\bigl(\|v_F(t)\|_{H^s}+\|\theta_F(t)\|_{H^s}\bigr)
 \le C\int_0^T\bigl(\|F_1(t)\|_{H^s}+\|F_2(t)\|_{H^s}\bigr)\,\mathrm dt.
\end{equation}
This follows from Minkowski's inequality and \eqref{eq:3.1.2}.
Approximation in \(L_t^1H^s\), together with strong continuity of the
homogeneous evolution, yields \((v_F,\theta_F)\in C_t(H^s\times H^s)\).
The constant is independent of \(T\), and no time derivative of the
sources is required.

To obtain bounds uniform near repeated roots, we separate the
uniformly damped modes from the region with distinct slow and fast
branches.  The spectral kernels satisfy the following estimates.
\begin{lemma}\label{lem:3.2}
Fix \(\delta\in(0,1)\), and set
\[
 D=1-\frac{4\rho}a,\qquad
 \Sigma_1=\{(k,m)\in\mathcal K_*:D\le\delta^2\},\qquad
 \Sigma_2=\{(k,m)\in\mathcal K_*:D>\delta^2\}.
\]
Write \(\Sigma_j^x=\Sigma_j\cap\mathcal K_*^x\).  The following
bounds hold uniformly for \(t\ge0\).
\begin{enumerate}
\item[\textup{(i)}] On \(\Sigma_1\),
\[
 |K_1(t)|\le te^{-\frac{1-\delta}{2}at},\qquad
 |K_2(t)|+|K_3(t)|\le C_\delta(1+at)e^{-\frac{1-\delta}{2}at}.
\]
\item[\textup{(ii)}] On \(\Sigma_2\),
\begin{equation}\label{eq:3.1.7}
 \rho\le-\lambda_+\le\frac{2\rho}{1+\delta},\qquad
 \frac a2\le-\lambda_-\le a,\qquad
 \lambda_+-\lambda_-\ge\delta a,
\end{equation}
\begin{equation}\label{eq:3.1.8}
\begin{aligned}
 |K_1(t)|\le{}&C_\delta a^{-1}(e^{-\rho t}+e^{-\frac a2t}),\\
 |K_2(t)|\le{}&C_\delta\left(\frac\rho a e^{-\rho t}+e^{-\frac a2t}\right),\\
 |K_3(t)|\le{}&C_\delta\left(e^{-\rho t}+\frac\rho a e^{-\frac a2t}\right).
\end{aligned}
\end{equation}
Multiplication of the \(K_1\) bound by \(\left|\frac{k}{m}\right|\)
or \(\frac{k^2}{m^2}\) gives the corresponding coupling bounds.
\end{enumerate}
\end{lemma}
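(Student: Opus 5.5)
The plan is a direct analysis of the roots $\lambda_\pm$ through the normalized discriminant. Since $N^2k^2/m^2=a\rho$, the characteristic equation reads $\lambda^2+a\lambda+a\rho=0$. The roots are therefore
\[
 \lambda_\pm=-\tfrac a2\bigl(1\mp\sqrt D\bigr),\qquad D=1-\tfrac{4\rho}{a},
\]
with the square root taken imaginary when $D<0$. Note also that $D<1$ whenever $k\ne0$, and $D=1$ when $k=0$. Every estimate then follows by inserting this parametrization into the explicit formulas \eqref{eq:3.1.3} and \eqref{eq:3.1.5}.

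\emph{Region $\Sigma_1$.} Here $D\le\delta^2$.
\begin{itemize}
\item If $0\le D\le\delta^2$, both roots are real and satisfy $\operatorname{Re}\lambda_\pm\le-\frac a2(1-\delta)$.
\item If $D<0$, we have $\operatorname{Re}\lambda_\pm=-\frac a2$.
\end{itemize}
For $K_1$ I would use the integral representation
\[
 K_1(t)=\int_0^t e^{\lambda_+\sigma}e^{\lambda_-(t-\sigma)}\,\mathrm d\sigma .
\]
This gives $|K_1|\le t\,e^{-\frac{1-\delta}{2}at}$ at once and covers repeated roots without any case split. For $K_2$ and $K_3$, I would use \eqref{eq:3.1.5} together with $|\lambda_\pm|\le C_\delta a$. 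The bound $|\lambda_\pm|\le a$ holds since $|\lambda_+\lambda_-|=a\rho\le a^2/4$ in the complex case (so $|\lambda_\pm|\le a/2$), and $|\lambda_\pm|\le a$ trivially in the real case. This gives
\[
 |K_2|+|K_3|\le a\,t\,e^{-\frac{1-\delta}{2}at}+e^{-\frac{1-\delta}{2}at},
\]
which is the stated bound $C_\delta(1+at)e^{-\frac{1-\delta}{2}at}$.

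\emph{Region $\Sigma_2$.} Here $D>\delta^2$, so both roots are real and
\[
 \lambda_+-\lambda_-=a\sqrt D\ge\delta a .
\]
The fast root satisfies $-\lambda_-=\frac a2(1+\sqrt D)\in[\frac a2,a]$. For the slow root, write
\[
 -\lambda_+=\frac a2\bigl(1-\sqrt D\bigr)=\frac{2\rho}{1+\sqrt D}.
\]
Since $\sqrt D\in(\delta,1]$, this yields $\rho\le-\lambda_+\le\frac{2\rho}{1+\delta}$, which proves \eqref{eq:3.1.7}.

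For \eqref{eq:3.1.8}, the $K_1$ bound is immediate: the denominator is at least $\delta a$ and the numerator is at most $e^{-\rho t}+e^{-\frac a2 t}$. For $K_2$, use \eqref{eq:3.1.5}:
\[
 |K_2|\le|\lambda_+|\,|K_1|+e^{\lambda_-t}\le \frac{2\rho}{\delta a}\bigl(e^{-\rho t}+e^{-\frac a2t}\bigr)+e^{-\frac a2t}.
\]
The cross term $\frac\rho a e^{-\frac a2 t}$ is absorbed into $e^{-\frac a2 t}$ because $\rho/a\le\frac14$.

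The step needing the most care is $K_3$, where a naive use of $|\lambda_-|\,|K_1|$ produces an $e^{-\frac a2t}$ term with coefficient of order one rather than $\rho/a$. I would rewrite $K_3$ directly as
\[
 K_3=\frac{\lambda_+e^{\lambda_-t}-\lambda_-e^{\lambda_+t}}{\lambda_+-\lambda_-}.
\]
Its second term is bounded by $\frac{a}{\delta a}e^{-\rho t}$. Its first term is bounded by
\[
 \frac{|\lambda_+|}{\delta a}e^{-\frac a2t}\le\frac{2\rho}{\delta a}e^{-\frac a2t},
\]
which gives exactly the claimed form.

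The coupling remark needs no further work: multiply the $K_1$ bound by $|k/m|$ or $k^2/m^2$ and rewrite using $k^2/m^2=a\rho/N^2$.
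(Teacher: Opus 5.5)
Your route is essentially the paper's: the same parametrization $\lambda_\pm=-\frac a2(1\mp\sqrt D)$, the integral representation of $K_1$ (sensibly used for complex roots too), \eqref{eq:3.1.5} for $K_2,K_3$, and direct substitution into \eqref{eq:3.1.3} on $\Sigma_2$. Your $\Sigma_2$ bounds and your $K_1$ bound on $\Sigma_1$ are correct.

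The gap is the bound on $|\lambda_\pm|$ for complex roots in $\Sigma_1$. Complex roots occur exactly when $D<0$, i.e.\ $4\rho>a$. Hence $|\lambda_\pm|^2=\lambda_+\lambda_-=a\rho>\frac{a^2}{4}$, so your inequality is reversed. Moreover $|\lambda_\pm|/a=\sqrt{\rho/a}$ is not bounded by any absolute constant. For instance, take $L_x=L_z=2\pi$, $\mu_x=\mu_z=1$, $N^2=100$ and $k=m=1$: then $a=2$ but $|\lambda_\pm|=10$. So the chain $|K_2|\le|\lambda_+|\,|K_1|+|e^{\lambda_-t}|\le at\,e^{-\frac{1-\delta}{2}at}+e^{-\frac{1-\delta}{2}at}$ is not justified as written, and the same applies to $K_3$.

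There are two ways to repair it.
\begin{enumerate}
\item[(a)] This is the paper's fix. For $\lambda_\pm=-\frac a2\pm i\omega$, write
\[
K_{2,3}=e^{-\frac{at}{2}}\Bigl(\cos(\omega t)\mp\frac{a}{2\omega}\sin(\omega t)\Bigr)
\]
and use $|\sin(\omega t)|\le\omega t$. This gives $|K_2|+|K_3|\le(2+at)e^{-\frac{at}{2}}$ with an absolute constant, independent of $\omega$. Equivalently, within your framework,
\[
\lambda_+K_1=e^{-\frac{at}{2}}\Bigl(-\frac a2\frac{\sin(\omega t)}{\omega}+i\sin(\omega t)\Bigr),
\]
so $|\lambda_+K_1|\le(1+\frac{at}{2})e^{-\frac{at}{2}}$; the large imaginary part of $\lambda_+$ only produces $\sin(\omega t)$.
\item[(b)] Keep your estimate and bound $\rho/a$ using the lattice. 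Since $a\ge2\sqrt{\mu_x\mu_z}|k||m|$ and $|m|\ge 2\pi/L_z$ on $\mathcal K_*$,
\[
\frac{\rho}{a}=\frac{N^2k^2}{a^2m^2}\le\frac{N^2}{4\mu_x\mu_z m^4}\le C .
\]
Hence $|\lambda_\pm|\le Ca$, with $C$ depending on $N,\mu_x,\mu_z,L_z$, which the paper's convention on constants permits.
\end{enumerate}
With either fix, the rest of your argument goes through unchanged.
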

\begin{proof}
For real roots in \(\Sigma_1\),
\[
 K_1(t)=\int_0^te^{\lambda_+(t-\tau)}e^{\lambda_-\tau}\,\mathrm d\tau,
 \qquad \lambda_-\le\lambda_+\le-\frac{1-\delta}{2}a.
\]
This proves the bound for \(K_1\), and \eqref{eq:3.1.5} gives the
other two.  For complex roots,
\[
 K_1=e^{-\frac{at}{2}}\frac{\sin(\omega t)}\omega,
 \qquad
 K_{2,3}=e^{-\frac{at}{2}}
       \left(\cos(\omega t)\mp\frac{a\sin(\omega t)}{2\omega}\right).
\]
The bound \(|\sin(\omega t)|\le\omega t\) proves part~\textup{(i)},
including the repeated-root limit.  On \(\Sigma_2\),
\(\delta<\sqrt D\le1\), and
\[
 \lambda_+=-\frac{2\rho}{1+\sqrt D},\qquad
 \lambda_-=-\frac a2(1+\sqrt D),\qquad
 \lambda_+-\lambda_-=a\sqrt D.
\]
These identities prove \eqref{eq:3.1.7}.  Substitution into
\eqref{eq:3.1.3} gives \eqref{eq:3.1.8}, with the factor
\(\frac\rho a\) retained in the slow part of \(K_2\).
\end{proof}

For the rest of the paper, fix this partition with \(\delta=\frac12\).
On \(\Sigma_1^x\),
\[
 \frac3{16}\le\frac\rho a
 =\frac{N^2k^2}{a^2m^2}
 \le\frac{N^2}{\mu_xam^2}\le\frac Ca.
\]
Hence \(\Sigma_1^x\) is finite on the fixed lattice.  On this set,
\begin{equation}\label{eq:3.1.9}
 a(k,m)\le C,\qquad |K_1(t)|+|K_2(t)|+|K_3(t)|\le Ce^{-ct}.
\end{equation}
The same bound holds for the coupling and diagnostic multipliers
on this finite set, including nonreal and repeated roots.  In contrast,
for fixed \(k\ne0\),
\[
 \lim_{|m|\to\infty}m^4\lambda_+(k,m)=-\frac{N^2k^2}{\mu_z}.
\]
Thus no uniform spectral gap is available on the horizontally
oscillatory subspace.  The slow factors in \eqref{eq:3.1.8}
will instead yield algebraic decay under the regularity assumptions
below.

\subsection{Linear stability and forced response}\label{subsec:3.2}
The preceding spectral estimates yield the following stability and
algebraic decay results for the homogeneous linearized system.
The limiting temperature is the stationary horizontal mean
\(\langle\theta_0\rangle_x\).

\begin{proposition}\label{prop:3.1}
Let \((v,\theta)\) solve \eqref{eq:3.1.1} with zero-vertical-mean
initial data, and set \(w=-\mathcal Tv\).  The following statements hold.
\begin{enumerate}
\item[\textup{(i)}] Let \(s\ge0\), \(v_0\in H^{s+1}\), and
\(\theta_0\in H^s\).  Then
\begin{equation}\label{eq:3.2.1}
  \|v(t)\|_{H^s}
  +\|w(t)\|_{H^s}
  +\|\theta(t)-\langle\theta_0\rangle_x\|_{H^s}
  \longrightarrow0,
  \qquad t\to\infty,
\end{equation}
\begin{equation}\label{eq:3.2.2}
 2\int_0^\infty\mathcal D_s(t)\,\mathrm dt
=
 \|v_0\|_{H^s}^2
 +\frac{g\beta}{\gamma}\|\theta_0\|_{H^s}^2
 -\frac{g\beta}{\gamma}\|\langle\theta_0\rangle_x\|_{H^s}^2.
\end{equation}

\item[\textup{(ii)}] Suppose \(v_0,\theta_0\in L^2\),
\(w_0=-\mathcal Tv_0\in L^2\), and
\(\Pi_{\ne0}^x\theta_0\in H_z^2(\Omega)\).  Then, for \(t\ge0\),
\begin{equation*}
 \|v(t)\|_{L^2}
 \le Ce^{-ct}\|\Pi_0^x v_0\|_{L^2}
     +C(1+t)^{-1}\|\Pi_{\ne0}^x v_0\|_{L^2}
     +C(1+t)^{-\frac{1}{2}}\|\Pi_{\ne0}^x\theta_0\|_{L^2},
\end{equation*}
\begin{equation*}
 \|w(t)\|_{L^2}
 \le C(1+t)^{-1}
 \left(\|\Pi_{\ne0}^x w_0\|_{L^2}
       +\|\Pi_{\ne0}^x\theta_0\|_{L^2}\right),
\end{equation*}
\begin{equation*}
 \|\Pi_{\ne0}^x\theta(t)\|_{L^2}
 \le C(1+t)^{-\frac{1}{2}}
 \left(\|\Pi_{\ne0}^x v_0\|_{L^2}
       +\|\Pi_{\ne0}^x\theta_0\|_{H_z^2(\Omega)}\right).
\end{equation*}

The constants in these decay bounds depend only on the fixed
parameters and periods.
\end{enumerate}
\end{proposition}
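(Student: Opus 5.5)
Everything is done mode by mode. For \((k,m)\in\mathcal K_*\), \(\widehat U(t)=e^{\mathcal A t}\widehat U_0\) with the kernels of \eqref{eq:3.1.4}. Parseval then reduces both parts to pointwise-in-frequency bounds, combined with dominated convergence for part (i) and with Lemma~\ref{lem:3.2} for part (ii).

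\textbf{Part (i).} At \(k=0\) we have \(K_2=e^{-at}\), \(K_3=1\) and the coupling vanishes. Hence \(\widehat v(0,m,t)\to0\), while \(\widehat\theta(0,m,t)=\widehat\theta_0(0,m)\) is exactly the Fourier coefficient of \(\langle\theta_0\rangle_x\). For \(k\ne0\), each mode tends to zero: on the finite set \(\Sigma_1^x\) this follows from \eqref{eq:3.1.9}, and on \(\Sigma_2^x\) from \eqref{eq:3.1.8}, since \(\rho>0\) there. For domination we use the modal energy identity from the proof of Lemma~\ref{lem:3.1}. It gives
\[
|\widehat v(t)|^2+\tfrac{g\beta}{\gamma}|\widehat\theta(t)|^2\le |\widehat v_0|^2+\tfrac{g\beta}{\gamma}|\widehat\theta_0|^2,
\]
which is summable against \(\langle k,m\rangle^{2s}\). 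Dominated convergence therefore yields the convergence in \eqref{eq:3.2.1} for \(v\) and \(\theta-\langle\theta_0\rangle_x\).

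For \(w\) we need domination of \(\langle k,m\rangle^{2s}|k/m|^2|\widehat v|^2\). Here the hypothesis \(v_0\in H^{s+1}\) enters. Since \(|k/m|\le C\langle k,m\rangle\), it suffices to control \(\sup_t|\widehat v(t)|\le C(|\widehat v_0|+|k/m||K_1||\widehat\theta_0|)\). The term involving \(\theta_0\) needs \(|k/m||K_1|\cdot|k/m|\le C\) uniformly in \(t\). On \(\Sigma_2\), \eqref{eq:3.1.8} gives \(\frac{k^2}{m^2}|K_1|\le Ca^{-1}\frac{k^2}{m^2}=C\rho/N^2\le C\). On the finite set \(\Sigma_1^x\) this is trivial. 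With this bound, dominated convergence applies to \(w\) as well.

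Identity \eqref{eq:3.2.2} follows by letting \(t_2\to\infty\) in \eqref{eq:3.1.2}, using the limits just obtained.

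\textbf{Part (ii).} Split the modes into \(k=0\), \(\Sigma_1^x\) and \(\Sigma_2^x\).

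\emph{Mode \(k=0\).} Here \(\widehat v=e^{-at}\widehat v_0\), with \(a\ge\mu_z m^2\ge c\). This gives the \(e^{-ct}\|\Pi_0^xv_0\|\) term, and \(w\) vanishes on these modes.

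\emph{Modes in \(\Sigma_1^x\).} These decay exponentially by \eqref{eq:3.1.9}.

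\emph{Modes in \(\Sigma_2^x\).} Use \eqref{eq:3.1.8}. The contribution of \(v_0\) to \(v\) is \(K_2\widehat v_0\), bounded by \(\frac\rho a e^{-\rho t}+e^{-at/2}\). Since \(a\ge c\), we have \(\frac\rho a\le C\rho\) and \(\rho e^{-\rho t}\le C(1+t)^{-1}\). The contribution of \(\theta_0\) to \(v\) is \(|k/m||K_1|\le Ca^{-1/2}\rho^{1/2}(e^{-\rho t}+e^{-at/2})\) by \eqref{eq:2.3.2}, and \(\rho^{1/2}e^{-\rho t}\le C(1+t)^{-1/2}\).

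For \(w=-\mathcal T v\), write \(\widehat w_0=-(ik/m)\widehat v_0\). Then \(w\) evolves as \(K_2\widehat w_0-\frac{k^2}{m^2}\,\mathrm{const}\,K_1\widehat\theta_0\). The second factor is bounded by \(C\rho(e^{-\rho t}+e^{-at/2})\le C(1+t)^{-1}\).

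For \(\Pi_{\ne0}^x\theta\), write \(\widehat\theta=\frac{\gamma k}{m}K_1\widehat v_0+K_3\widehat\theta_0\). The first term behaves like the \(\theta_0\)-contribution to \(v\) and gives \((1+t)^{-1/2}\). The term \(K_3\widehat\theta_0\) contains \(e^{-\rho t}\), which does not decay uniformly. This is the main obstacle, and it is exactly where the vertical regularity enters. By \eqref{eq:2.3.1}, \(\rho\ge c_0(1+m^2)^{-2}\), so
\[
e^{-\rho t}\le \frac{C}{\rho t}\le C\,\frac{(1+m^2)^2}{t}.
\]
Interpolating against \(1\) gives \(e^{-\rho t}\le C(1+t)^{-1/2}(1+m^2)\). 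Since \(1+m^2\le(1+m^2)^2\), this is controlled by the \(H_z^2(\Omega)\) weight \((1+m^2)^2\). Summing with Parseval yields the stated bound.

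The only delicate points are checking that every multiplier is uniformly bounded on \(\Sigma_2\) via \eqref{eq:3.1.7}–\eqref{eq:3.1.8}, and that \(\Sigma_1^x\) is finite.
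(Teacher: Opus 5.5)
Your proposal is correct and follows essentially the same route as the paper: modewise kernel bounds from Lemma~\ref{lem:3.2} and \eqref{eq:3.1.9} with dominated convergence give (i), and for (ii) you use \(\rho^\alpha e^{-\rho t}\le C(1+t)^{-\alpha}\) together with \(e^{-\rho t}\le C(1+m^2)(1+t)^{-1/2}\) from \eqref{eq:2.3.1} for the slow temperature part. In (i), dominating via the modal energy inequality and \(v_0\in H^{s+1}\) is only a cosmetic variant of the paper's majorants \(C(|\widehat v_0|+|\widehat\theta_0|)\) and \(C(|\widehat w_0|+|\widehat\theta_0|)\). Two small slips: \(\widehat{w_0}=-\frac{k}{m}\widehat v_0\) carries no factor \(i\), and the amplitude weight \(1+m^2\) becomes exactly the \(H_z^2(\Omega)\) weight \((1+m^2)^2\) once squared, so the comparison \(1+m^2\le(1+m^2)^2\) is unnecessary.
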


\begin{remark}
The additional regularity of \(v_0\) in part~\textup{(i)} ensures
\(w_0=-\mathcal Tv_0\in H^s\).  It is not required for the
\(H^s\times H^s\) well-posedness established in
Lemma~\ref{lem:3.1}.
\end{remark}

\begin{proof}
We first prove part~\textup{(i)}.  For \(m\ne0\),
\[
 \widehat v(0,m,t)=e^{-\mu_zm^2t}\widehat v_0(0,m),
 \quad \widehat w(0,m,t)=0,
 \quad \widehat\theta(0,m,t)=\widehat\theta_0(0,m).
\]
For \(k,m\ne0\), both eigenvalues have negative real part, so each
Fourier coefficient tends to zero.  The representation
\eqref{eq:3.1.4} also gives
\[
 \widehat w=K_2\widehat w_0
           +g\beta\frac{k^2}{m^2}K_1\widehat\theta_0,
 \qquad \widehat w_0=-\frac{k}{m}\widehat v_0.
\]
Lemma~\ref{lem:3.2}, \eqref{eq:3.1.9}, and \eqref{eq:2.3.2}
therefore imply
\[
 |\widehat v|+|\widehat{\theta-\langle\theta_0\rangle_x}|
 \le C\bigl(|\widehat v_0|+|\widehat\theta_0|\bigr),
 \qquad
 |\widehat w|
 \le C\bigl(|\widehat w_0|+|\widehat\theta_0|\bigr).
\]
Since \(w_0\in H^s\), the squares of these majorants are summable
with weight \(L_xL_z\langle k,m\rangle^{2s}\).
Dominated convergence proves \eqref{eq:3.2.1}.  Taking
\(t_1=0\) and \(t_2\to\infty\) in \eqref{eq:3.1.2} then yields
\eqref{eq:3.2.2}.

We next prove part~\textup{(ii)}.  The multipliers on \(\Sigma_1^x\) are exponentially damped by \eqref{eq:3.1.9}.
On \(\Sigma_2^x\), \eqref{eq:3.1.8} and \eqref{eq:2.3.2} yield
\[
 |K_2|+\left|\frac{k^2}{m^2}K_1\right|
 \le C\rho e^{-\rho t}+Ce^{-ct}
\]
and
\[
 \left|\frac{k}{m}K_1\right|
 \le C\rho^{\frac12}e^{-\rho t}+Ce^{-ct},
 \qquad
 |K_3|\le Ce^{-\rho t}+Ce^{-ct}.
\]
Using \(\rho^\alpha e^{-\rho t}\le C_\alpha(1+t)^{-\alpha}\) for \(0<\rho\le C_0\) and \(\alpha>0\), we obtain, on \(\mathcal K_*^x\),
\[
 |\widehat v(t)|
 \le C(1+t)^{-1}|\widehat v_0|
     +C(1+t)^{-\frac12}|\widehat\theta_0|
\]
and
\[
 |\widehat w(t)|
 \le C(1+t)^{-1}\bigl(|\widehat w_0|+|\widehat\theta_0|\bigr).
\]
Parseval's identity and the separate heat evolution of
\(\Pi_0^xv\) give the two velocity estimates.

For the temperature fluctuation, the velocity input is controlled
by \(\frac{k}{m}K_1\).  By \eqref{eq:2.3.1}, its slow temperature
input satisfies
\[
 e^{-\rho t}=\rho^{-\frac12}\rho^{\frac12}e^{-\rho t}
 \le C(1+m^2)(1+t)^{-\frac12}.
\]
Thus two vertical derivatives of \(\Pi_{\ne0}^x\theta_0\) suffice:
\[
\begin{aligned}
 \|\Pi_{\ne0}^x\theta(t)\|_2^2
 \le{}&C(1+t)^{-1}L_xL_z\sum_{\mathcal K_*^x}
       \bigl(|\widehat v_0|^2+(1+m^2)^2|\widehat\theta_0|^2\bigr)\\
 ={}&C(1+t)^{-1}\bigl(\|\Pi_{\ne0}^xv_0\|_2^2
                +\|\Pi_{\ne0}^x\theta_0\|_{H_z^2(\Omega)}^2\bigr).
\end{aligned}
\]
Taking square roots completes the proof.
\end{proof}

The same modal majorants also give uniform convergence when
\(\|v_0\|_{\mathcal W^0}+\|w_0\|_{\mathcal W^0}
+\|\theta_0\|_{\mathcal W^0}<\infty\), with the same compatibility
conditions:
\[
 \|v(t)\|_\infty+\|w(t)\|_\infty
 +\|\theta(t)-\langle\theta_0\rangle_x\|_\infty\longrightarrow0.
\]
This follows from the same coefficientwise limits and dominated
convergence in the absolute Fourier sums.

We now estimate the forced system.  The slow spectral contribution
can be expressed through the current temperature, leaving an
exponentially damped equation for the remaining component.  The
following bounds will be applied on finite intervals of the nonlinear
lifespan.
\begin{lemma}\label{lem:3.3}
Let \(s\ge4\) be an integer and \(T>0\).  Suppose that
\(F_i\in C([0,T];H^{s-1})\), \(i=1,2\), have zero vertical
means.  Let \((v,\theta)\in C([0,T];H^s\times H^s)\) be a
distributional solution of \eqref{eq:2.3.11}, with
\((u,f)=(v,\theta)\), zero vertical means, and initial data
\((v_0,\theta_0)\).
Set \(\Theta=\Pi_{\ne0}^x\theta\), \(w^0=-\mathcal Tv\), and
\(\varepsilon=\|v_0\|_{H^s}+\|\theta_0\|_{H^s}\).
For \(0\le r\le s-1\) and \(0\le t\le T\),
\begin{equation}\label{eq:3.2.3}
 \|w^0(t)\|_{H^r}
 \le C_r\varepsilon e^{-ct}+C_r\|\mathcal M\Theta(t)\|_{H^r}
 +C_r\int_0^tK_{\mathrm f}(t-\tau)
                    \sum_{i=1}^2\|F_i(\tau)\|_{H^r}\,\mathrm d\tau,
\end{equation}
\begin{equation}\label{eq:3.2.4}
 \|v(t)\|_{H^r}
 \le C_r\|\mathcal M^{\frac34}\Theta(t)\|_{H^r}+C_r\varepsilon e^{-ct}
 +C_r\int_0^t e^{-c(t-\tau)}
                    \sum_{i=1}^2\|F_i(\tau)\|_{H^r}\,\mathrm d\tau,
\end{equation}
where \(K_{\mathrm f}(u)=(1+u^{-\frac12})e^{-cu}\) for \(u>0\).
The full horizontal velocity also satisfies
\begin{equation}\label{eq:3.2.5}
 \|v(t)\|_{\mathcal W^0}
 \le C\varepsilon e^{-ct}
 +C\|\mathcal M\Theta(t)\|_2^{\frac34}\|\Theta(t)\|_{H^4}^{\frac14}
 +C\int_0^tK_{\mathrm f}(t-\tau)
                    \sum_{i=1}^2\|F_i(\tau)\|_2\,\mathrm d\tau.
\end{equation}
The constants are independent of \(T\), the solution, and the
source terms \(F_1,F_2\).
\end{lemma}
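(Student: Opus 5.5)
The plan is to diagonalize the Fourier system mode by mode. I would split off the slow eigencomponent, which is slaved to the current temperature fluctuation \(\Theta\). What remains is a fast component obeying an exponentially damped scalar equation, and it is estimated by Duhamel. The three frequency regions are treated separately: \(k=0\), the finite set \(\Sigma_1^x\), and \(\Sigma_2^x\) (with \(\delta=\frac12\) as fixed after Lemma~\ref{lem:3.2}).

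\emph{Modes \(k=0\) and \(\Sigma_1^x\).} For \(k=0\), \(m\ne0\), we have \(\widehat w^0=0\) and \(\widehat v\) solves a heat equation with rate \(a\ge c\). Variation of constants gives \(\varepsilon e^{-ct}\) plus \(\int_0^te^{-c(t-\tau)}|\widehat F_1|\,\mathrm d\tau\). On the finite set \(\Sigma_1^x\), where \(a\le C\), I would use \eqref{eq:3.1.4} together with \eqref{eq:3.1.9}. All kernels, including the coupling multipliers \(\frac km K_1\) and \(\frac{k^2}{m^2}K_1\), are \(O(e^{-ct})\) there, so \(v\) and \(w^0\) are bounded by \(C\varepsilon e^{-ct}+C\int_0^te^{-c(t-\tau)}(|\widehat F_1|+|\widehat F_2|)\,\mathrm d\tau\). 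Since only finitely many frequencies occur, all Sobolev weights are harmless and this contribution fits every claimed bound.

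\emph{Modes in \(\Sigma_2^x\).} Here \(\lambda_+\ne\lambda_-\) are real. The slow eigenvector of \(\mathcal A\) satisfies \(\widehat v=-\frac{g\beta k}{m(a+\lambda_+)}\widehat\theta\). Let \(\mathcal B\) be the multiplier with symbol \(b:=\frac{k}{m(a+\lambda_+)}\) on \(\Sigma_2^x\), extended by zero elsewhere, and set \(Z:=v+g\beta\mathcal B\Theta\) on these modes. The row vector \((1,g\beta b)\) is a left eigenvector of \(\mathcal A\) for \(\lambda_-\): the identity \(\lambda_+\lambda_-=N^2k^2/m^2\) makes the \(\widehat\theta\)-coefficient vanish. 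Hence, exactly,
\[
 \partial_t\widehat Z=\lambda_-\widehat Z+\widehat F_1+g\beta b\widehat F_2,
 \qquad -\lambda_-\ge\tfrac a2 .
\]
By \eqref{eq:3.1.7}, \(a+\lambda_+\ge a/2\), so \(|b|\le 2|k|/(a|m|)\). Using \eqref{eq:2.3.2} and \(\rho a^2=N^2k^2a/m^2\ge c\), we get \(a^{-1/2}\le C\rho^{1/4}\) and therefore \(|b|\le C\rho^{1/2}a^{-1/2}\le C\rho^{3/4}\). Moreover \(|\tfrac km b|\le Ck^2/(am^2)=C\rho\), and \(|\tfrac km|=a^{1/2}\rho^{1/2}/N\le Ca^{1/2}\). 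Then:
\begin{itemize}
\item writing \(v=Z-g\beta\mathcal B\Theta\), the term \(\mathcal B\Theta\) gives \(\|\mathcal M^{3/4}\Theta\|_{H^r}\);
\item writing \(w^0=-\mathcal TZ+g\beta\mathcal T\mathcal B\Theta\), the second term gives \(\|\mathcal M\Theta\|_{H^r}\);
\item the fast part \(Z\) is handled by Duhamel with kernel \(e^{-a(t-\tau)/2}\le e^{-c(t-\tau)}\), and \(|b|\) is bounded;
\item for \(\mathcal TZ\), the factor \(a^{1/2}e^{-au/2}\le Cu^{-1/2}e^{-cu}\) produces \(K_{\mathrm f}\);
\item the initial contribution is \(|\widehat Z(0)|\le C(|\widehat v_0|+|\widehat\theta_0|)\). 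For \(\mathcal TZ\) one spends one derivative: \(a^{1/2}\langle k,m\rangle^{r}\le C\langle k,m\rangle^{s}\) for \(r\le s-1\), giving \(\varepsilon e^{-ct}\).
\end{itemize}
Parseval, summed over the three regions, then proves \eqref{eq:3.2.3} and \eqref{eq:3.2.4}. The constants are independent of \(T\), since only the modal kernels enter.

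\emph{The Wiener estimate \eqref{eq:3.2.5}.} This is the step I expect to be most delicate, because \(\ell^1\) sums in two dimensions sit at the borderline. For the slow part, I would write \(\rho^{3/4}|\widehat\Theta|=(\rho|\widehat\Theta|)^{3/4}(\langle k,m\rangle^4|\widehat\Theta|)^{1/4}\langle k,m\rangle^{-1}\). Hölder with exponents \((\tfrac83,8,4)\) then applies, and \(\sum_{\mathcal K}\langle k,m\rangle^{-4}<\infty\) gives \(C\|\mathcal M\Theta\|_2^{3/4}\|\Theta\|_{H^4}^{1/4}\). For the Duhamel part of \(Z\), Cauchy--Schwarz gives
\[
 \sum_{\mathcal K_*}e^{-a u/2}|\widehat F|\le\Bigl(\sum_{\mathcal K_*}e^{-au}\Bigr)^{1/2}\|F\|_2,
\]
and \(\sum_{\mathcal K_*} e^{-au}\le C(1+u^{-1})e^{-cu}\) in two dimensions, which yields \(K_{\mathrm f}\). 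The initial part of \(Z\) is bounded in \(\mathcal W^0\) by \eqref{eq:2.1.7}, because \(s\ge4>2\), and it carries the factor \(e^{-ct}\). The \(k=0\) and \(\Sigma_1^x\) modes are treated by the same arguments.
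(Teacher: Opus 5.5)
Your decomposition is the paper's own. From $\lambda_+(a+\lambda_+)=-N^2k^2/m^2$ one gets $-g\beta b=\frac{m\lambda_+}{\gamma k}=h_+$, so $Z$ is exactly $v_{\mathrm f}=v-h_+\Theta$ from \eqref{eq:3.2.6}, and your equation for $\widehat Z$ is \eqref{eq:3.2.7}. Your arguments for \eqref{eq:3.2.3}--\eqref{eq:3.2.4}, for the modes $k=0$ and $\Sigma_1^x$, and for the fast and initial parts of \eqref{eq:3.2.5} are fine.

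\textbf{The gap is the slow term in the Wiener bound.} Your H\"older exponents do not add up: $\frac38+\frac18+\frac14=\frac34$. That inequality therefore only controls the $\ell^{4/3}$ norm of $\rho^{3/4}|\widehat\Theta|$, not the $\ell^1$ norm you need. The target $\|\mathcal M\Theta\|_2^{3/4}\|\Theta\|_{H^4}^{1/4}$ forces the first two factors into $\ell^{8/3}$ and $\ell^8$. The weight $\langle k,m\rangle^{-1}$ would then have to lie in $\ell^2(\mathcal K)$, but $\sum_{\mathcal K}\langle k,m\rangle^{-2}$ diverges logarithmically in two dimensions. So once you replace $|b|$ by $C\rho^{3/4}$, Cauchy--Schwarz can only give $\|\mathcal M^{3/4}\Theta\|_{H^1}\bigl(\sum_{\mathcal K}\langle k,m\rangle^{-2}\bigr)^{1/2}=\infty$. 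This is exactly the borderline you anticipated, and your argument does not clear it.

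\textbf{The fix.} The paper avoids this by not discarding the extra smallness of the slow multiplier. Keep the bound $|b|\le 2|k|/(a|m|)$ that you already derived. Then $|b|\rho^{-3/4}\le C|m|^{1/2}a^{-1/4}|k|^{-1/2}$, and splitting the $m$-sum at $|m|=|k|$ gives
\[
 \sum_{\Sigma_2^x}\frac{|b|^2}{\rho^{3/2}\langle k,m\rangle^2}
 \le C\sum_{k,m\ne0}\frac{|m|}{|k|\langle k,m\rangle^3}
 \le C\sum_{k\ne0}\frac1{k^2}<\infty .
\]
Cauchy--Schwarz then gives $\sum_{\Sigma_2^x}|b|\,|\widehat\Theta|\le C\|\mathcal M^{3/4}\Theta\|_{H^1}$. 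Next apply H\"older with the admissible pair $(\frac43,4)$ to
\[
 \rho^{3/2}\langle k,m\rangle^2|\widehat\Theta|^2=(\rho^2|\widehat\Theta|^2)^{3/4}(\langle k,m\rangle^8|\widehat\Theta|^2)^{1/4}.
\]
This yields $\|\mathcal M^{3/4}\Theta\|_{H^1}\le\|\mathcal M\Theta\|_2^{3/4}\|\Theta\|_{H^4}^{1/4}$, which is the content of \eqref{eq:3.2.12}.

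The crude $\rho^{3/4}$ bound can also be rescued, by a Carlson-type dyadic splitting along the level sets of $\langle k,m\rangle^4/\rho$. It cannot be rescued by a single H\"older step.
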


\begin{proof}
We use the partition fixed in Subsection~\ref{subsec:3.1}.
On \(\Sigma_2^x\), \eqref{eq:3.1.7} gives
\(\lambda_-\le-\frac a2\) and \(\rho\le|\lambda_+|\le C\rho\).
Define
\begin{equation}\label{eq:3.2.6}
 h_+(k,m):=\frac{m\lambda_+(k,m)}{\gamma k},
 \qquad
 \widehat v_{\mathrm f}:=\widehat v-h_+\widehat\Theta
 \quad\text{on }\Sigma_2^x.
\end{equation}
The characteristic polynomial yields
\[
 -a-\frac{\gamma k}{m}h_+=\lambda_-,
 \qquad \lambda_-h_+=\frac{g\beta k}{m}.
\]
The assumed regularity makes each Fourier coefficient absolutely
continuous.  Since \(\widehat\Theta=\widehat\theta\) for \(k\ne0\),
the forced equations yield
\begin{equation}\label{eq:3.2.7}
 \partial_t\widehat v_{\mathrm f}
 =\lambda_-\widehat v_{\mathrm f}+\widehat F_1-h_+\widehat F_2.
\end{equation}
Hence
\[
 \widehat v_{\mathrm f}(t)=e^{\lambda_-t}\widehat v_{\mathrm f}(0)
 +\int_0^te^{\lambda_-(t-\tau)}
     \bigl(\widehat F_1(\tau)-h_+\widehat F_2(\tau)\bigr)\,\mathrm d\tau,
\]
and the original variables satisfy
\begin{equation}\label{eq:3.2.8}
 \widehat v=h_+\widehat\Theta+\widehat v_{\mathrm f},
 \qquad
 \widehat{w^0}=-\frac{\lambda_+}{\gamma}\widehat\Theta
                           -\frac{k}{m}\widehat v_{\mathrm f}.
\end{equation}
The relevant multipliers satisfy
\begin{equation}\label{eq:3.2.9}
 |h_+|\le C\frac{|k|}{a|m|}\le C\rho^{\frac34},
 \qquad \left|\frac{k}{m}h_+\right|\le C\rho,
 \qquad \left|\frac{k}{m}\right|\le C\sqrt a.
\end{equation}
For the comparison with \(\rho^{\frac34}\),
\[
 \frac{|k|}{a|m|}\rho^{-\frac34}
 =\frac{|m|^{\frac12}}{N^{\frac32}a^{\frac14}|k|^{\frac12}}
 \le C,
\]
using \(a\ge\mu_zm^2\) and the lower bound on nonzero \(|k|\).
The other bounds follow from the definition of \(\rho\) and the
lower bound on nonzero \(|m|\).  Combining \eqref{eq:3.2.9} with
\(\lambda_-\le-\frac a2\), we obtain, for \(u>0\),
\begin{equation}\label{eq:3.2.10}
 \left|\frac{k}{m}e^{\lambda_-u}\right|
 \le Cu^{-\frac12}e^{-cu},
 \qquad
 \left|\frac{k}{m}h_+e^{\lambda_-u}\right|\le Ce^{-cu}.
\end{equation}
At time zero,
\[
 \left(L_xL_z\sum_{\Sigma_2^x}\langle k,m\rangle^{2r}
       \left|\frac{k}{m}\widehat v_{\mathrm f}(k,m,0)\right|^2\right)^{\frac12}
 \le C_r\bigl(\|v_0\|_{H^{r+1}}+\|\Theta_0\|_{H^r}\bigr)
 \le C_r\varepsilon.
\]
The initial fast component of \(v\) is likewise bounded by
\(C_r\varepsilon\), since \(h_+\) is bounded.  Neither initial
term has a short-time singularity.  Inserting
\eqref{eq:3.2.9}--\eqref{eq:3.2.10} in the preceding integral
formula and using Minkowski's inequality proves the
\(\Sigma_2^x\) contributions to \eqref{eq:3.2.3}--\eqref{eq:3.2.4}.
The finite region \(\Sigma_1^x\) contributes exponentially damped
initial and source terms by \eqref{eq:3.1.9}.

The horizontal zero mode of \(w^0\) vanishes.  The horizontal mean
of \(v\) satisfies
\begin{equation}\label{eq:3.2.11}
\left\{
\begin{aligned}
 &\partial_t\Pi_0^xv-\mu_z\partial_{zz}\Pi_0^xv=\Pi_0^xF_1,\\
 &\Pi_0^xv|_{t=0}=\Pi_0^xv_0.
\end{aligned}
\right.
\end{equation}
Since the datum and source have zero vertical mean,
\[
 \|\Pi_0^xv(t)\|_{H^r}
 \le Ce^{-ct}\|\Pi_0^xv_0\|_{H^r}
 +C\int_0^te^{-c(t-\tau)}\|\Pi_0^xF_1(\tau)\|_{H^r}\,\mathrm d\tau.
\]
This completes \eqref{eq:3.2.3}--\eqref{eq:3.2.4}.
The possibly time-dependent horizontal mean of \(\theta\) is
annihilated by \(\mathcal T\) and every positive power of
\(\mathcal M\), and therefore does not enter the estimates.

For the Wiener bound, let \(\mathcal H_+\) have symbol \(h_+\)
on \(\Sigma_2^x\) and zero elsewhere.  For \(\mathcal Qf=f\),
Cauchy--Schwarz and Fourier interpolation give
\begin{equation}\label{eq:3.2.12}
 \|\mathcal H_+f\|_{\mathcal W^0}
 \le C\|\mathcal M^{\frac34}f\|_{H^1}
 \le C\|\mathcal Mf\|_2^{\frac34}\|f\|_{H^4}^{\frac14}.
\end{equation}
For the first inequality, \eqref{eq:3.2.9} and ellipticity give
\[
 \sum_{\Sigma_2^x}
 \frac{|h_+(k,m)|^2}{\rho(k,m)^{\frac32}\langle k,m\rangle^2}
 \le C\sum_{k,m\ne0}\frac{|m|}{|k|\langle k,m\rangle^3}
 \le C\sum_{k\ne0}\frac1{k^2}<\infty.
\]
The last bound follows by summing in \(m\) separately over
\(|m|\ge|k|\) and \(|m|<|k|\).  For the second inequality,
H\"older's inequality yields
\[
 \|\mathcal M^{\frac34}f\|_{H^1}^2
 =L_xL_z\sum_{\mathcal K_*^x}
      \bigl(\rho^2|\widehat f|^2\bigr)^{\frac34}
      \bigl(\langle k,m\rangle^8|\widehat f|^2\bigr)^{\frac14}
 \le\|\mathcal Mf\|_2^{\frac32}\|f\|_{H^4}^{\frac12}.
\]

For \(u>0\), the heat sum satisfies
\[
 \left(\sum_{\mathcal K_*^x}e^{-a(k,m)u}\right)^{\frac12}
 \le C(1+u^{-\frac12})e^{-cu}=CK_{\mathrm f}(u).
\]
Since \(\lambda_-\le-\frac a2\) and \(h_+\) is bounded,
Cauchy--Schwarz controls the fast source terms from \(L^2\) to
\(\mathcal W^0\) by this kernel.  Equation~\eqref{eq:3.1.9}
handles the finite region.  The horizontal zero mode is controlled
by the one-dimensional heat sum, whose singularity
\(u^{-\frac14}\) is also bounded by \(C(1+u^{-\frac12})\).
The initial contribution has the direct bound
\[
 \sum_{\Sigma_2^x}|\widehat v_{\mathrm f}(k,m,0)|
 +\|\Pi_0^xv_0\|_{\mathcal W^0}
 \le C\bigl(\|v_0\|_{\mathcal W^0}+\|\Theta_0\|_{\mathcal W^0}\bigr)
 \le C\varepsilon
\]
by \eqref{eq:2.1.7}.  The remaining finite set satisfies the same bound.
Together with \eqref{eq:3.2.12}, these estimates prove
\eqref{eq:3.2.5}.

Finally, let \(P_J\) retain the Fourier modes \(|(k,m)|\le J\),
and apply the estimates to the truncated solution and sources.  For \(0\le r\le s-1\),
\[
 \sup_{t\le T}\int_0^tK_{\mathrm f}(t-\tau)
       \|(I-P_J)F_i(\tau)\|_{H^r}\,\mathrm d\tau
 \le\|K_{\mathrm f}\|_{L^1(0,\infty)}
       \|(I-P_J)F_i\|_{C_tH^{s-1}}\longrightarrow0.
\]
The same argument applies to the exponential kernel.  Convergence
of the solution in \(C_tH^s\) gives convergence of \(w^0\) in
\(C_tH^{s-1}\) and of \(v\) in \(C_t\mathcal W^0\).
Passing to the limit proves the estimates for all \(t\in[0,T]\).
\end{proof}

%=====================================================================
% Section 4. Global stability and asymptotic stratification
%=====================================================================
\section{Global stability and asymptotic stratification}\label{sec:4}
We establish the nonlinear energy estimates in
Subsection~\ref{subsec:4.1} and use them in Subsection~\ref{subsec:4.2}
to prove Theorem~\ref{thm:1.1}.  We then show that the diagnostic
time-integral estimate used in the proof cannot hold uniformly
with only an \(H^3\) bound on the initial data.

\subsection{Nonlinear energy estimates}\label{subsec:4.1}
Let \(s\ge4\) be an integer, and let \([0,T]\) be a compact subinterval
of the lifespan of a compatible \(H^s\)-strong solution.  With
\(\mathcal D_s\) as in \eqref{eq:1.2.1}, set
\[
 E_1(t):=\|v(t)\|_{H^s,\mathrm d}^2+\|\theta(t)\|_{H^s,\mathrm d}^2,
 \qquad
 X(t):=\langle\mathcal L^{-1}\mathcal Tv(t),\Theta(t)\rangle_{H^s,\mathrm d}.
\]
\begin{equation}\label{eq:4.1}
 E_*(T):=\sup_{0\le t\le T}E_1(t)+\int_0^T\mathcal D_s(t)\,\mathrm dt.
\end{equation}
\begin{equation*}
 \mathcal C_v(t):=\|\nabla v(t)\|_{L^\infty}
   +\|w^0(t)\|_{W^{2,\infty}}
   +\|\langle w\rangle_z(t)\|_{W_x^{2,\infty}}.
\end{equation*}

Then \(\|w\|_{W^{2,\infty}}\le C\mathcal C_v\).  The identity
\(\mathcal Q\theta=\Theta\) allows us to apply
\eqref{eq:2.3.6}--\eqref{eq:2.3.8} to the temperature fluctuation.
At \(s=4\), we use
\begin{equation}\label{eq:4.2}
 e(t):=E_1(t)^{\frac12},
 \qquad V(t):=\|v(t)\|_{H^5},
 \qquad A_4(t):=\|\mathcal M^{\frac12}\Theta(t)\|_{H^4,\mathrm d},
\end{equation}
so \(V^2\) is equivalent to \(\mathcal D_4\).

The velocity--temperature cross term controls the temperature
fluctuation without thermal diffusion.  A related energy argument for
the Boussinesq system appears in \cite[Proposition~4.3]{JangKim2023}.
On each interval \([0,T]\) with \(\sup_{t\le T}E_1(t)\le1\),
the cross term \(X\) belongs to \(W^{1,1}(0,T)\) and satisfies,
for almost every \(t\in(0,T)\),
\begin{equation}\label{eq:4.3}
 X'(t)+\frac{1}{2\gamma}
 \|\mathcal M^{\frac12}\Theta(t)\|_{H^s,\mathrm d}^2
 \le C\mathcal D_s(t).
\end{equation}
Consequently,
\begin{equation}\label{eq:4.4}
 \int_0^T\|\mathcal M^{\frac12}\Theta(t)\|_{H^s,\mathrm d}^2\,\mathrm dt
 \le C E_*(T).
\end{equation}
The constants in \eqref{eq:4.3}--\eqref{eq:4.4} are independent
of \(T\) and of the solution under the stated bound.  To prove
\eqref{eq:4.3}, we use the diagnostic formulas and the algebra
property of \(H^{s-1}\) to obtain
\begin{equation}\label{eq:4.5}
 \|\mathcal N_1(t)\|_{H^{s-1}}+\|\mathcal N_2(t)\|_{H^{s-1}}
 \le CE_1(t)^{\frac12}\mathcal D_s(t)^{\frac12},
 \qquad \|w(t)\|_{H^{s-1}}\le C\mathcal D_s(t)^{\frac12}.
\end{equation}
Thus \(\mathcal N_i\in L_t^2H^{s-1}\).  The projected equations
satisfy \eqref{eq:2.3.11} with \((u,f)=(v,\Theta)\),
\(F_1=\Pi_{\ne0}^z\mathcal N_1\), and \(F_2=\mathcal Q\mathcal N_2\).
The full-derivative version of \eqref{eq:2.3.12} applies at order
\(s\).  In particular, \(X\in W^{1,1}\).  By \eqref{eq:2.1.2},
\eqref{eq:2.3.5}, and \eqref{eq:2.3.8}, we have
\[
 |\langle\mathcal Tv,\Theta\rangle_{H^s,\mathrm d}|
 +|\langle\mathcal Q\mathcal N_1,\mathcal L^{-1}\mathcal T\Theta\rangle_{H^s,\mathrm d}|
 \le C(1+E_1^{\frac12})\mathcal D_s^{\frac12}
          \|\mathcal M^{\frac12}\Theta\|_{H^s,\mathrm d},
\]
and
\[
 \gamma\|\mathcal L^{-\frac12}\mathcal Tv\|_{H^s,\mathrm d}^2
 +|\langle\mathcal Q\mathcal N_2,\mathcal L^{-1}\mathcal Tv\rangle_{H^s,\mathrm d}|
 \le C(1+E_1^{\frac12})\mathcal D_s.
\]
Young's inequality proves \eqref{eq:4.3}.  The estimate
\begin{equation}\label{eq:4.6}
 |X(t)|\le C\|v(t)\|_{H^s,\mathrm d}\|\Theta(t)\|_{H^s,\mathrm d}
 \le CE_1(t)
\end{equation}
then gives \eqref{eq:4.4} by time integration.

The highest-order temperature terms require the following weighted
embeddings.  The fourth-order case uses mixed
norms in place of second-order uniform bounds.
For \(\mathcal Qf=f\) and integers \(s\ge5\),
\[
 \|f\|_{\mathcal W^2}\le C\|\mathcal M^{\frac12}f\|_{H^5},
 \qquad
 \|f\|_{W^{2,\infty}}\le C_s\|\mathcal M^{\frac12}f\|_{H^s,\mathrm d},
\]
whenever the respective right-hand sides are finite.
At order four, one has
\begin{equation}\label{eq:4.7}
 \|f\|_{\mathcal W^1}+\|f\|_{H^2}
 +\|\partial_xf\|_{H^2}+\|\mathcal Tf\|_{H^3}
 \le C\|\mathcal M^{\frac12}f\|_{H^4},
\end{equation}
and, for \(|\alpha|\le2\),
\begin{equation}\label{eq:4.8}
 \|\partial^\alpha f\|_{L_x^\infty L_z^2}
 \le C\|\mathcal M^{\frac12}f\|_{H^4}.
\end{equation}

The Wiener estimates follow from the convergent coefficient sum
\[
 \sum_{\mathcal K_*^x}\frac{\langle k,m\rangle^{-6}}{\rho}
 \le C\sum_{k\ne0}\frac1{k^2}
         \sum_m\frac{m^2}{(1+k^2+m^2)^2}
 \le C\sum_{k\ne0}\frac1{k^2(1+k^2)^{\frac12}}<\infty.
\]
Cauchy--Schwarz gives \(H^5\)-control of \(\mathcal W^2\) and
\(H^4\)-control of \(\mathcal W^1\).  Equations
\eqref{eq:2.1.7}, \eqref{eq:2.1.1}, and \eqref{eq:2.3.6}
give the remaining Sobolev and uniform bounds.
For \eqref{eq:4.8}, apply the Hilbert-valued
one-dimensional embedding to \(\partial^\alpha f\).  Its additional
horizontal derivative is controlled by \(\|\partial_xf\|_{H^2}\)
in \eqref{eq:4.7}.

To justify the energy identities below, we approximate strong solutions
by smooth solutions of the original system, with convergence also in
the velocity dissipation norm.  This approximation follows from the local theory and the
one-sided comparison estimates in \cite{PaperI}.

\begin{lemma}\label{lem:4.1}
Let \(s\ge4\) be an integer, and let \((v,\theta)\) be a compatible,
strictly stratified \(H^s\)-strong solution on a compact subinterval
\([0,T]\) of its maximal lifespan.  There is a sequence of solutions \((v_n,\theta_n)\) of the original
system, with smooth compatible initial data and arbitrarily high
spatial Sobolev regularity on \([0,T]\), such that
\begin{equation}\label{eq:4.9}
\begin{aligned}
 v_n\longrightarrow{}&v
 &&\text{in }C([0,T];H^s)\cap L^2(0,T;H^{s+1}),\\
 \theta_n\longrightarrow{}&\theta
 &&\text{in }C([0,T];H^s).
\end{aligned}
\end{equation}
The stratification bounds are uniform on \([0,T]\times\Omega\):
\begin{equation}\label{eq:4.10}
 R_n:=\gamma+\partial_z\theta_n\ge\kappa_T,
 \qquad R\ge\kappa_T,
\end{equation}
for some \(\kappa_T>0\) independent of \(n\).  Moreover,
\begin{equation}\label{eq:4.11}
\begin{aligned}
 w_n^0\longrightarrow{}&w^0
 &&\text{in }C([0,T];H^{s-1})\cap L^2(0,T;H^s),\\
 \langle w_n\rangle_z\longrightarrow{}&\langle w\rangle_z
 &&\text{in }C([0,T];H_x^{s-1}),\\
 p_n\longrightarrow{}&p
 &&\text{in }C([0,T];H^s).
\end{aligned}
\end{equation}
\end{lemma}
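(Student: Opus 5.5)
We obtain Lemma~\ref{lem:4.1} by mollifying the initial datum and invoking the continuous-dependence and persistence-of-regularity statements from \cite{PaperI}, recalled in Subsection~\ref{subsec:2.2}. Let \(U_0=(v_0,\theta_0)\) be the datum of the given solution, and let \(P_n\) be the Fourier truncation to \(|(k,m)|\le n\), or any smooth Fourier mollifier. Set \(U_{0,n}=P_nU_0\). Each \(U_{0,n}\) is real and smooth, and since \(P_n\) commutes with vertical averaging, it keeps zero vertical means. Moreover \(U_{0,n}\to U_0\) in \(H^s\times H^s\). Because \(H^s\hookrightarrow W^{1,\infty}\) for \(s\ge4\), we have \(\partial_z\theta_{0,n}\to\partial_z\theta_0\) uniformly, so \(\gamma+\partial_z\theta_{0,n}\ge\kappa_0/2\) for large \(n\). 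Thus the \(U_{0,n}\) are compatible, strictly stratified data.

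By the stability statement of \cite[Subsections~5.1 and~5.3]{PaperI}, since \([0,T]\subset[0,T_{\max}^{(s)}(U_0))\), the solutions \(U_n\) exist on \([0,T]\) for all large \(n\) and converge to \(U\) in \(C_t(H^s\times H^s)\). The same statement also gives convergence of \(v_n\) in \(L_t^2H^{s+1}\). If only \(C_t\) convergence were available, I would recover the dissipation convergence from the energy inequality for the difference \(v_n-v\), which is the one-sided comparison estimate of \cite{PaperI}. This gives \eqref{eq:4.9} after discarding finitely many indices.

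For arbitrarily high regularity, fix \(\sigma\ge s\). The uniform bound \(\sup_{t\le T}\|U_n\|_{H^s}\le M\) follows from convergence. We also need a uniform lower bound \(R_n\ge\kappa\). By the persistence result, the \(H^\sigma\) solution from \(U_{0,n}\) agrees with the \(H^s\) one and satisfies finite-\(T\) \(H^\sigma\) bounds. Hence \(U_n\) has every Sobolev order on \([0,T]\); these bounds need not be uniform in \(n\), and that is not required.

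For \eqref{eq:4.10}, the local theory gives \(R\ge\kappa>0\) on \([0,T]\times\Omega\). Convergence in \(C_tH^s\) and the embedding \(H^{s-1}\hookrightarrow L^\infty\) give \(\|R_n-R\|_{L^\infty([0,T]\times\Omega)}\to0\). So \(\kappa_T=\kappa/2\) works for large \(n\); this lower bound also supplies the \(\kappa\) used in the persistence step.

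The diagnostic convergences \eqref{eq:4.11} follow from linear and Lipschitz estimates. First, \(w_n^0-w^0=-\partial_z^{-1}\partial_x(v_n-v)\). By \eqref{eq:2.1.3}, this converges in \(C_tH^{s-1}\cap L_t^2H^s\). Next, \(\langle w_n\rangle_z-\langle w\rangle_z=-\gamma^{-1}\partial_x\langle v_n\theta_n-v\theta\rangle_z\). The product estimate \eqref{eq:2.1.5} and boundedness of averaging give convergence in \(C_tH_x^{s-1}\). Finally, \eqref{eq:2.2.8} handles \(p_n\to p\) in \(C_tH^s\); it is also directly visible from \eqref{eq:2.2.2}.

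The main obstacle is making sure the convergence in \(L^2(0,T;H^{s+1})\) and the uniform interval of existence are really covered by the cited results. The continuous-dependence statement recalled above mentions only \(C_t(H^s\times H^s)\). If it must be supplemented, I would write the difference equation for \(\delta v=v_n-v\) and use the velocity energy at order \(s\), treating the hydrostatic term \(g\beta\mathcal T\delta\theta\) as a bounded source in \(L_t^2H^{s-1}\) paired against \(\delta v\in L_t^2H^{s+1}\). The nonlinear differences would be bounded by \eqref{eq:2.2.8}. This yields
\[
\|\delta v\|_{L_t^2H^{s+1}}^2\le C\bigl(\|\delta v(0)\|_{H^s}^2+T\|\delta U\|_{C_tH^s}^2\bigr)+C\|\delta U\|_{C_tH^s}\,\|\delta v\|_{L_t^2H^{s+1}},
\]
which tends to zero.
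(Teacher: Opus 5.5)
Your proposal is correct and has the same structure as the paper's proof. You smooth the datum with a Fourier operator, and you invoke the stability and higher-order persistence results of \cite{PaperI} for existence on $[0,T]$ and $C_t(H^s\times H^s)$ convergence. You then get \eqref{eq:4.10} from uniform convergence of $R_n$ and read off \eqref{eq:4.11} from the diagnostic formulas.

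The one real difference is how you obtain the $L^2(0,T;H^{s+1})$ convergence of $v_n$. The paper takes it from the one-sided comparison and smoothing estimates of \cite{PaperI} with the dissipation retained. That route gives a quantitative bound in terms of $\|(I-J_n)U_0\|_{H^s}+n\|(I-J_n)U_0\|_{H^{s-1}}$.

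Your ``fallback'' derives it from the $C_t(H^s\times H^s)$ convergence alone, and it is valid:
\begin{itemize}
\item by \eqref{eq:2.2.5}, identity \eqref{eq:2.3.9} applies to $\delta v$ with $n=s$ and $\ell=1$;
\item by \eqref{eq:2.2.8}, the forcing $-g\beta\mathcal T\delta\theta+\Pi_{\ne0}^z(\mathcal N_1(U_n)-\mathcal N_1(U))$ is bounded in $C_tH^{s-1}$ by $C\|\delta U\|_{C_tH^s}$;
\item then \eqref{eq:2.3.4} and Young's inequality give $\|\delta v\|_{L_t^2H^{s+1}}^2\le C\bigl(\|\delta v(0)\|_{H^s}^2+T\|\delta U\|_{C_tH^s}^2\bigr)\to0$.
\end{itemize}
This route is more self-contained, since it uses only facts recalled in Subsection~\ref{subsec:2.2}. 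It works because parabolic smoothing upgrades $C_t$ convergence for free when the nonlinearity loses only one derivative. The cost is that it gives no rate.

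You should present this estimate as the argument itself, not as a fallback. The continuous-dependence statement recalled in Subsection~\ref{subsec:2.2} asserts only $C_t(H^s\times H^s)$ convergence. Your sentence claiming that ``the same statement also gives'' convergence in $L_t^2H^{s+1}$ is therefore not supported as written.
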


\begin{proof}
Take \(U_{0,n}=J_nU_0\), where \(J_n\) is the Fourier smoothing
operator used in \cite[Subsection~5.1]{PaperI}, and let
\(U_n=(v_n,\theta_n)\) be the corresponding solutions of the original
system.  The Hadamard theorem and higher-order persistence in
\cite[Subsections~2.2 and~5.1]{PaperI} give, for all sufficiently
large \(n\), solutions on \([0,T]\) with arbitrarily high spatial
Sobolev regularity and \(U_n\to U\) in \(C_t(H^s\times H^s)\).
The smoothing preserves real-valuedness and the mean constraints.
Sobolev convergence preserves strict stratification for all large
\(n\), and \eqref{eq:4.10} follows after discarding finitely many terms.

The one-sided comparison and smoothing estimates in
\cite[Subsections~5.2--5.3]{PaperI}, with the velocity dissipation
term retained, imply
\[
\begin{aligned}
 &\|U_n-U\|_{C_t(H^s\times H^s)}
   +\|v_n-v\|_{L_t^2H^{s+1}}\\
 \le{}&C_T\left(
   \|(I-J_n)U_0\|_{H^s\times H^s}
   +n\|(I-J_n)U_0\|_{H^{s-1}\times H^{s-1}}
   \right)\longrightarrow0.
\end{aligned}
\]
Here \(C_T\) may depend on \(T\), the reference solution bounds,
and the positive stratification lower bound, but not on \(n\).
The Fourier tail estimates for \(J_n\) make the right-hand side
tend to zero.  This proves
\eqref{eq:4.9}.  The diagnostic bounds
\eqref{eq:2.2.7}--\eqref{eq:2.2.8}, together with
\(w_n^0-w^0=-\mathcal T(v_n-v)\), give \eqref{eq:4.11}.
\end{proof}

We now estimate the high-order energy.  For \(R>0\), define
\begin{equation*}
 E_2=\|v\|_{H^s,\mathrm d}^2+\frac{g\beta}{\gamma}\|\theta\|_2^2
       +g\beta\sum_{1\le|\alpha|\le s}
                       \int_\Omega R^{-1}|\partial^\alpha\theta|^2.
\end{equation*}
The horizontal compensation for the diagnostic mean is established
in \cite[Section~3]{PaperI}.  Combined with the indirect temperature
dissipation, it yields the following energy estimate with coefficient
\(\mathcal C_v\).  Its time integrability will be proved in
Subsection~\ref{subsec:4.2}.

\begin{proposition}\label{prop:4.1}
Let \(s\ge4\) be an integer.  There are
\(\lambda_E>0\), \(\delta_2\in(0,1]\), and \(c,C>0\) such that
every compatible \(H^s\)-strong solution on \([0,T]\) with
\begin{equation}\label{eq:4.12}
 \sup_{t\le T}E_1(t)\le\frac{\delta_2^2}{4}
\end{equation}
satisfies \(R\ge\frac\gamma2\).  The modified energy
\(E_3(t):=E_2(t)+\lambda_E X(t)\) belongs to \(W^{1,1}(0,T)\) and obeys
\begin{equation}\label{eq:4.13}
  cE_1(t)\le E_3(t)\le CE_1(t),
\end{equation}
and, for almost every \(t\in(0,T)\),
\begin{equation}\label{eq:4.14}
  \frac{\mathrm d}{\mathrm dt}E_3(t)
  +c\bigl(\mathcal D_s(t)+\|\mathcal M^{\frac12}\Theta(t)\|_{H^s,\mathrm d}^2\bigr)
  \le C\mathcal C_v(t)E_3(t).
\end{equation}
The constants are independent of \(T\) and of higher solution norms.
\end{proposition}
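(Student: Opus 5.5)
We prove Proposition~\ref{prop:4.1} by combining the stratification-weighted energy identity of \cite[Section~3]{PaperI} with the cross-term estimate \eqref{eq:4.3}. Smallness and stratification come first: by \eqref{eq:4.12} and \eqref{eq:2.1.1}, $\|\theta\|_{H^s}\le C\delta_2$. Since $s\ge4$, Sobolev embedding gives $\|\partial_z\theta\|_{L^\infty}\le C\delta_2\le\gamma/2$ once $\delta_2$ is small, so $R\ge\gamma/2$ and $R\le 3\gamma/2$. Hence $E_2$ is equivalent to $E_1$, with constants depending only on $g\beta,\gamma$ (the zero-order temperature term and Poincar\'e for zero-mean $\theta$ fill in the missing lower derivatives). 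By \eqref{eq:4.6}, $|\lambda_EX|\le C\lambda_EE_1$. Choosing $\lambda_E$ small therefore gives \eqref{eq:4.13}.

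For the differential inequality we work with the smooth approximations of Lemma~\ref{lem:4.1}: we derive \eqref{eq:4.14} in integrated form for $(v_n,\theta_n)$ and pass to the limit using \eqref{eq:4.9}--\eqref{eq:4.11}. This limit also yields $E_3\in W^{1,1}$, since $X\in W^{1,1}$ was already shown.

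For smooth solutions we differentiate $E_2$ as follows.
\begin{itemize}
\item[(a)] We apply $\partial^\alpha$ to the momentum equation and pair it with $\partial^\alpha v$. This produces $\mathcal D_s$ and the pressure coupling $g\beta\int\partial^\alpha\theta\,\partial^\alpha w$, after integrating $\partial_xp$ by parts with $\partial_zp=g\beta\theta$ and incompressibility.
\item[(b)] We apply $\partial^\alpha$ to the temperature equation, $|\alpha|\ge1$, and test with $g\beta R^{-1}\partial^\alpha\theta$. The principal term is $\partial^\alpha w\,(\gamma+\partial_z\theta)=R\,\partial^\alpha w$. After division by $R$ it cancels the coupling from (a) exactly; this is the weighted cancellation.
\item[(c)] The remaining terms are of two kinds: transport terms $\int R^{-1}(v\partial_x+w\partial_z)\partial^\alpha\theta\,\partial^\alpha\theta$, which integrate by parts to $\int\partial_tR^{-1}$ and transport of $R^{-1}$ and are bounded by $\mathcal C_v E_2$ using the $R$-equation; and commutators, estimated by \eqref{eq:2.1.10}--\eqref{eq:2.1.11} with coefficient $\|\nabla v\|_{L^\infty}+\|w\|_{W^{2,\infty}}\le C\mathcal C_v$. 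The exception is the terms in which all $s$ derivatives land on $w$ but are multiplied by $\partial^\eta\theta$ with $\eta\ne0$.
\end{itemize}

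Those top-order $w$ terms are the main obstacle. For $w^0$ they are harmless, since $\|w^0\|_{H^s}\le C\|v\|_{H^{s+1}}$ and smallness absorbs them into $\mathcal D_s$ via $C\delta_2\mathcal D_s$. For $\langle w\rangle_z$, however, $\partial_x^s\langle w\rangle_z$ contains $\partial_x^{s+1}\langle v\theta\rangle_z$, one temperature derivative too many. Here we invoke the horizontal compensation of \cite[Section~3]{PaperI}. Writing $\langle w\rangle_z$ via \eqref{eq:2.2.4} and integrating by parts in $x$ moves the extra derivative onto a viscous velocity factor, so the term is bounded by $C\|\theta\|_{H^s}\mathcal D_s^{1/2}\|\mathcal M^{1/2}\Theta\|_{H^s}$ plus $\mathcal C_vE_2$. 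The $\partial_x\Theta$ factors are bounded through \eqref{eq:2.3.7}, since only fluctuations carry horizontal derivatives.

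At $s=4$ the coefficient $R^{-1}$ lies only in $H^3$, so we replace $L^\infty$ bounds on its second derivatives by the mixed norms \eqref{eq:2.1.9}. The factors carrying $\partial^\alpha\Theta$ with $|\alpha|\le2$ are then bounded by \eqref{eq:4.7}--\eqref{eq:4.8}.

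Altogether this gives
\[
E_2'+2c_1\mathcal D_s\le C\mathcal C_vE_2+C\delta_2\bigl(\mathcal D_s+\|\mathcal M^{1/2}\Theta\|_{H^s,\mathrm d}^2\bigr).
\]
Adding $\lambda_E$ times \eqref{eq:4.3} contributes $-\frac{\lambda_E}{2\gamma}\|\mathcal M^{1/2}\Theta\|^2$ and $+C\lambda_E\mathcal D_s$. We first choose $\lambda_E$ small so that $C\lambda_E\le c_1$, and then choose $\delta_2$ small relative to $\lambda_E$ so that the $C\delta_2$ terms are absorbed. This yields \eqref{eq:4.14}, using \eqref{eq:4.13} to replace $E_2$ by $E_3$ on the right.
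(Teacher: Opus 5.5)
Your overall framework matches the paper's: you test the differentiated temperature equation with $g\beta R^{-1}\partial^\alpha\theta$ so that $R\,\partial^\alpha w$ cancels the pressure coupling, compensate the endpoint term $(\partial_x^s\langle w\rangle_z)\partial_zv$ horizontally, add $\lambda_E$ times \eqref{eq:4.3} with $\lambda_E$ fixed before $\delta_2$, and pass to the limit via Lemma~\ref{lem:4.1}. Step (c), however, has a genuine gap.

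The bounds \eqref{eq:2.1.10}--\eqref{eq:2.1.11} do not give a coefficient $\mathcal C_v$. Each has a second term in which the high Sobolev norm falls on the coefficient function. For the internal temperature terms $\int_\Omega q\,\partial^\alpha\theta\,(\partial^\eta w)\,\partial_z\partial^{\alpha-\eta}\theta$, with $q=R^{-1}$ and $0<\eta<\alpha$, \eqref{eq:2.1.11} produces $\|\theta\|_{H^s}\|w\|_{H^{s-1}}\|\theta\|_{W^{2,\infty}}\sim E_1\mathcal D_s^{1/2}$. When the last factor is the fluctuation, this is repaired by $\|\Theta\|_{W^{2,\infty}}\le C\|\mathcal M^{1/2}\Theta\|_{H^s,\mathrm d}$ for $s\ge5$, and at $s=4$ by a separate treatment of $|\eta|=3$ using \eqref{eq:4.8}. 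It is not repaired when the last factor is the horizontal mean $S=\langle\theta\rangle_x$. Consider
\[
 I_{a,b,j}=\int_\Omega q\,(\partial_x^a\partial_z^b\theta)\,(\partial_x^a\partial_z^{b-j}w)\,S^{(j+1)},
 \qquad a+b-j\ge3 .
\]
These terms carry at least three derivatives on $w$, which $\mathcal C_v$ does not control in $L^\infty$. The factor $S^{(j+1)}$ is never dissipated, since $\mathcal M$ annihilates horizontal means. The test factor is controlled by $\|\mathcal M^{1/2}\Theta\|_{H^s,\mathrm d}$ only at order at most $s-1$ with a horizontal derivative, or at most $s-2$, and its mean part is not controlled at all. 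What remains is $E_1\mathcal D_s^{1/2}$, which is linear in the dissipation. It is not of the form $\mathcal C_vE_1$, and Young's inequality leaves $E_1^2$. That is fatal, because $E_1\ge\|\theta\|_2^2\to\|S_\infty\|_2^2$, which is generally positive. Your intermediate inequality for $E_2'$ is therefore unjustified. Note also that the top-order $\langle w\rangle_z$ difficulty lives in the momentum commutator. In the temperature equation the only top-order $w$ term, $(\partial^\alpha w)\partial_z\theta$, is already inside $R\,\partial^\alpha w$.

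Handling $I_{a,b,j}$ is the bulk of the paper's proof and needs ideas absent from your plan:
\begin{itemize}
\item If $b=j$, then $a\ge3$, the test factor is a fluctuation, and one vertical derivative is moved off it.
\item If $b-j\ge1$, one uses $\partial_zw=-\partial_xv$ as in \eqref{eq:4.23}, which also removes $\langle w\rangle_z$. One then transfers $d\le2$ vertical derivatives from the test fluctuation onto $qS^{(j+1)}$ and the velocity as in \eqref{eq:4.24}, so that the temperature factor falls under \eqref{eq:2.3.7}. The single exceptional top mean derivative $S^{(s)}$ is placed in $L^2$ as in \eqref{eq:4.25}.
\item For the mean part of the test factor ($a=0$), $\langle w\rangle_x=0$ lets one replace $q$ by $q-q_0=-\Theta_z/(R(\gamma+S'))$, which is dissipated.
\item At $s=4$, the configurations $(a,b)=(3,1),(2,2),(1,3),(0,4)$ need the explicit integrations by parts \eqref{eq:4.21} and the mixed-norm pairing \eqref{eq:4.22}, keeping $q_{zz}$ and $S''''$ out of $L^\infty$.
\end{itemize}
Your remark about mixed norms points in this direction but does not supply these identities.
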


\begin{proof}
We first work with smooth solutions under the bounds
\(E_1^{\frac12}\le1\) and
\(\|\partial_z\theta\|_{L^\infty}\le\frac\gamma2\).
The choice of \(\delta_2\) below ensures both conditions for the
solution and its sufficiently close smooth approximations.
Write \(S=\langle\theta\rangle_x\) and \(q=R^{-1}\).
Sobolev embedding, averaging, and the composition estimate give
\begin{equation}\label{eq:4.15}
 \frac\gamma2\le R\le\frac{3\gamma}2,\qquad
 \frac\gamma2\le\gamma+S'\le\frac{3\gamma}2,\qquad
 \|q\|_{W^{1,\infty}}\le C.
\end{equation}
The diagnostic bounds \eqref{eq:2.2.7},
\eqref{eq:2.1.4}, and norm equivalence imply
\begin{equation}\label{eq:4.16}
\begin{aligned}
 &\|v\|_{W^{2,\infty}}+\|\theta\|_{W^{2,\infty}}+\|S\|_{H_z^s} \le CE_1^{\frac12},\\
 &\|v\|_{H^{s+1}}+\|w^0\|_{H^s}+\|w\|_{H^{s-1}} \le C\mathcal D_s^{\frac12},\\
 &\|\langle w\rangle_z\|_{H_x^{s-1}} \le CE_1^{\frac12}\mathcal D_s^{\frac12}.
\end{aligned}
\end{equation}
When \(s\ge5\), we also use
\begin{equation}\label{eq:4.17}
 \|q\|_{W^{2,\infty}}\le C,\qquad
 \|\theta\|_{W^{3,\infty}}\le CE_1^{\frac12},\qquad
 \|\Theta\|_{W^{2,\infty}}\le C\|\mathcal M^{\frac12}\Theta\|_{H^s,\mathrm d},
\end{equation}
where the last bound is the weighted embedding established above.
At \(s=4\), we use \eqref{eq:4.7}--\eqref{eq:4.8} instead.  With
\eqref{eq:4.2}, the coefficient bounds are
\begin{equation*}
 \|q_{zz}\|_{L_x^2L_z^\infty}\le Ce,\qquad
 \|S''\|_{L_z^\infty}+\|S'''\|_{L_z^\infty}+\|S''''\|_{L_z^2}\le Ce.
\end{equation*}
Indeed, \(q_{zz}=2R^{-3}\theta_{zz}^2-R^{-2}\theta_{zzz}\), and
\[
\|\theta_{zzz}\|_{L_x^2L_z^\infty}\le C(\|\theta_{zzz}\|_2+\|\theta_{zzzz}\|_2).
\]
No second-order uniform bound on \(q\) is used in this case.

All constants are uniform under \eqref{eq:4.15} and the preliminary
smallness bound.  In particular, \(c_EE_1\le E_2\le C_EE_1\) for
fixed \(c_E,C_E>0\).
With \(D_t:=\partial_t+v\partial_x+w\partial_z\), the temperature
equation gives
\[
D_tR=R\partial_xv-(\partial_zv)\partial_x\Theta.
\]
Hence
\begin{equation}\label{eq:4.18}
  D_t(R^{-1})=-R^{-1}\partial_xv+R^{-2}(\partial_zv)\partial_x\Theta,
  \qquad
  \|D_t(R^{-1})\|_{L^\infty}\le C\mathcal C_v.
\end{equation}

We derive the weighted identity before estimating its remainders.
For \(1\le|\alpha|\le s\), the equations take the form
\begin{equation*}
\left\{
\begin{aligned}
  &D_t(\partial^\alpha v)+\mathcal L \partial^\alpha v+\partial_x\partial^\alpha p
  =-[\partial^\alpha,v]\partial_xv
       -[\partial^\alpha,w]\partial_zv,\\
  &D_t(\partial^\alpha\theta)+R\partial^\alpha w
  =-[\partial^\alpha,v]\partial_x\Theta
       -\sum_{0<\eta<\alpha}\binom\alpha\eta
        (\partial^\eta w)\partial_z\partial^{\alpha-\eta}\theta.
\end{aligned}
\right.
\end{equation*}
The term \(R\partial^\alpha w\) combines the highest-order vertical
transport term with the background coupling.  Pair the equations with
\(\partial^\alpha v\) and \(g\beta R^{-1}\partial^\alpha\theta\),
respectively.  Periodicity, incompressibility, and hydrostatic balance
imply
\begin{equation*}
  \int_\Omega \partial^\alpha v\partial_x\partial^\alpha p=-g\beta\int_\Omega\partial^\alpha\theta \partial^\alpha w.
\end{equation*}
This cancels the principal temperature coupling, including the
compatible vertical mean.  Summing the weighted identities over
\(1\le|\alpha|\le s\) and adding the zero-order energy identity
\eqref{eq:2.2.9}, we obtain
\begin{equation}\label{eq:4.19}
\begin{aligned}
  \frac12\frac{\mathrm d}{\mathrm dt}E_2
       +\|\mathcal L^{\frac12}v\|_{H^s,\mathrm d}^2
  ={}&-\sum_{1\le|\alpha|\le s}\int_\Omega
       \partial^\alpha v\bigl([\partial^\alpha,v]\partial_xv
                     +[\partial^\alpha,w]\partial_zv\bigr)\\
     &-g\beta\sum_{1\le|\alpha|\le s}\int_\Omega
       R^{-1}\partial^\alpha\theta[\partial^\alpha,v]\partial_x\Theta\\
     &-g\beta\sum_{1\le|\alpha|\le s}
       \sum_{0<\eta<\alpha}\binom\alpha\eta
       \int_\Omega R^{-1}\partial^\alpha\theta(\partial^\eta w)
                        \partial_z\partial^{\alpha-\eta}\theta\\
     &+\frac{g\beta}{2}\sum_{1\le|\alpha|\le s}
       \int_\Omega D_t(R^{-1})|\partial^\alpha\theta|^2.
\end{aligned}
\end{equation}
The full-derivative dissipation is comparable to \(\mathcal D_s\)
in \eqref{eq:1.2.1}, by \eqref{eq:2.1.1} and
\eqref{eq:2.3.4}.

For the horizontal temperature commutator, \eqref{eq:2.1.10},
\eqref{eq:4.16}, and \eqref{eq:2.3.6} imply
\begin{equation}\label{eq:4.20}
\begin{aligned}
 \sum_{1\le|\alpha|\le s}
      \left|\int_\Omega R^{-1}\partial^\alpha\theta
                   [\partial^\alpha,v]\partial_x\Theta\right|
  \le{}&CE_1^{\frac12}\left(
       \|\nabla v\|_{L^\infty}\|\Theta\|_{H^s}
       +\|v\|_{H^s}\|\partial_x\Theta\|_{L^\infty}\right)\\
  \le{}&C\mathcal C_vE_1
       +CE_1^{\frac12}\mathcal D_s^{\frac12}
         \|\mathcal M^{\frac12}\Theta\|_{H^s,\mathrm d}.
\end{aligned}
\end{equation}
We next split the last temperature factor in the vertical sum as
\(\theta=\Theta+S\).  For the fluctuation part and \(s\ge5\),
\eqref{eq:2.1.11} gives
\[
\begin{aligned}
 \sum_{1\le|\alpha|\le s}\sum_{0<\eta<\alpha}\binom\alpha\eta
 \left|\int_\Omega q\partial^\alpha\theta
              (\partial^\eta w)\partial_z\partial^{\alpha-\eta}\Theta\right|
 \le{}&CE_1^{\frac12}\left(\|\nabla w\|_\infty\|\Theta\|_{H^s}
                   +\|w\|_{H^{s-1}}\|\Theta\|_{W^{2,\infty}}\right)\\
 \le{}&C\mathcal C_vE_1+CE_1^{\frac12}\mathcal D_s^{\frac12}
                    \|\mathcal M^{\frac12}\Theta\|_{H^s,\mathrm d}.
\end{aligned}
\]
At \(s=4\), terms with \(|\eta|\le2\) instead use
\(\|\partial^\eta w\|_\infty\le C\mathcal C_v\) and two
\(L^2\) temperature factors.  If \(|\eta|=3\), necessarily
\(|\alpha|=4\).  For \(\eta_2\ge1\), incompressibility and
\(H^5\hookrightarrow W^{3,\infty}\) yield
\[
 \|\partial^\eta w\|_\infty\le CV,
 \qquad \|\partial_z\partial^{\alpha-\eta}\Theta\|_2\le CA_4.
\]
For \(\eta=(3,0)\), the last factor is \(\Theta_{xz}\) or
\(\Theta_{zz}\).  We estimate the velocity factors using the
diagnostic formula:
\[
 \|\partial_x^3w^0\|_{L_x^2L_z^\infty}
 \le C\|\partial_x^4v\|_2\le CV,
 \qquad
 \|\partial_x^3\langle w\rangle_z\|_{L_x^2}\le CeV.
\]
Estimate \eqref{eq:4.8} places the last factor in
\(L_x^\infty L_z^2\), with norm at most \(CA_4\).
Thus the fluctuation contribution is bounded by
\(C\mathcal C_ve^2+CeVA_4\) also at order four.

For the mean-temperature contribution, write \(\alpha=(a,b)\) and
\(n=a+b\le s\).  Only \(\alpha-\eta=(0,j)\) occurs, so the
terms, with coefficients \(\binom bj\), are
\[
 I_{a,b,j}:=\int_\Omega q(\partial_x^a\partial_z^b\theta)
                     (\partial_x^a\partial_z^{b-j}w)S^{(j+1)},
 \qquad 1\le j\le b,\quad n-j\ge1.
\]
If \(n-j\le2\), then \(j+1\le n\le s\) and
\[
 |I_{a,b,j}|\le C\|\partial_x^a\partial_z^{b-j}w\|_\infty
              \|\partial_x^a\partial_z^b\theta\|_2
              \|S^{(j+1)}\|_{L^2(\Omega)}
 \le C\mathcal C_vE_1.
\]
Suppose \(n-j\ge3\).  When \(b=j\), one has \(a\ge3\)
and \(b+3\le s\).  Moving one vertical derivative from the
fluctuation gives
\[
 I_{a,b,b}=-\int_\Omega\partial_x^a\partial_z^{b-1}\Theta\,
                    \partial_z\bigl(qS^{(b+1)}\partial_x^aw\bigr).
\]
The bounds
\[
 \|qS^{(b+1)}\|_{W^{1,\infty}}\le CE_1^{\frac12},
 \qquad
 \|\partial_x^aw\|_2+\|\partial_x^a\partial_zw\|_2
 \le C\mathcal D_s^{\frac12}
\]
and \eqref{eq:2.3.7} control this term by
\(CE_1^{\frac12}\mathcal D_s^{\frac12}
\|\mathcal M^{\frac12}\Theta\|_{H^s,\mathrm d}\).
At \(s=4\), this includes the configuration \((a,b,j)=(3,1,1)\).

The remaining terms have \(b-j\ge1\).  At \(s=4\), they satisfy
\(n=4\), \(j=1\), and \((a,b)=(2,2),(1,3),(0,4)\).
Set \(Q_2=qS''\).  Then
\(\|Q_2\|_\infty+\|(Q_2)_z\|_\infty\le Ce\).
For \((a,b)=(2,2)\) or \((1,3)\), incompressibility gives
\[
 I_{a,b,1}=\int_\Omega\partial_x^a\partial_z^{b-1}\Theta
       \bigl((Q_2)_z\partial_x^{a+1}\partial_z^{b-2}v
                    +Q_2\partial_x^{a+1}\partial_z^{b-1}v\bigr).
\]
The temperature factor has order three and contains a horizontal
derivative, so \eqref{eq:2.3.7} bounds its \(L^2\) norm by \(CA_4\).
Each velocity factor has \(L^2\) norm at most \(CV\).
Hence \(|I_{a,b,1}|\le CeVA_4\).

For \((a,b)=(0,4)\), split
\(\theta_{zzzz}=\Theta_{zzzz}+S''''\).  Since
\(w_{zzz}=-v_{xzz}\), two vertical integrations by parts give
\begin{equation}\label{eq:4.21}
 \int_\Omega q\Theta_{zzzz}w_{zzz}S''
 =-\int_\Omega\Theta_{zz}
       \bigl((Q_2)_{zz}v_{xzz}+2(Q_2)_zv_{xzzz}+Q_2v_{xzzzz}\bigr).
\end{equation}
The last two terms are bounded by \(CeVA_4\).  In the first,
\((Q_2)_{zz}=qS''''+2q_zS'''+q_{zz}S''\).  The first two
coefficients are treated by
\[
 \left|\int_\Omega\Theta_{zz}qS''''v_{xzz}\right|
 \le C\|\Theta_{zz}\|_2\|S''''\|_{L^2(\Omega)}\|v_{xzz}\|_\infty
 \le CeVA_4,
 \qquad \|q_zS'''\|_\infty\le Ce.
\]
For the last coefficient, \eqref{eq:2.1.9} and \eqref{eq:4.8} yield
\begin{equation}\label{eq:4.22}
 \left|\int_\Omega\Theta_{zz}q_{zz}S''v_{xzz}\right|
 \le\|S''\|_\infty\|\Theta_{zz}\|_{L_x^\infty L_z^2}
       \|q_{zz}\|_{L_x^2L_z^\infty}\|v_{xzz}\|_2
 \le Ce^2VA_4.
\end{equation}
Neither \(S''''\) nor \(q_{zz}\) is placed in \(L^\infty\).
For the mean test factor, put \(q_0=(\gamma+S')^{-1}\).
The horizontal mean of \(w\), and hence of \(w_{zzz}\), vanishes.
Therefore
\[
 \int_\Omega qS''''S''w_{zzz}
 =\int_\Omega(q-q_0)S''''S''w_{zzz},
 \qquad q-q_0=-\frac{\Theta_z}{R(\gamma+S')}.
\]
Estimate \eqref{eq:4.7} gives \(\|q-q_0\|_\infty\le CA_4\).
With \(S''''\) and \(w_{zzz}\) in \(L^2\), this contribution
is at most \(Ce^2VA_4\).  Since \(e\le1\), the mean-temperature
terms at order four are bounded by
\(C\mathcal C_ve^2+CeVA_4\).

For \(s\ge5\) and \(b-j\ge1\), use
\begin{equation}\label{eq:4.23}
 \partial_x^a\partial_z^{b-j}w
 =-\partial_x^{a+1}\partial_z^{b-j-1}v.
\end{equation}
For the fluctuation in the test factor, choose
\[
 d:=\begin{cases}
       \max\{0,n-(s-1)\},&a\ge1,\\
       \max\{0,n-(s-2)\},&a=0.
     \end{cases}
\]
Then \(0\le d\le2\) and \(d\le b\).  Transferring these
\(d\) vertical derivatives gives
\begin{equation}\label{eq:4.24}
 \int_\Omega q(\partial_x^a\partial_z^b\Theta)
                (\partial_x^a\partial_z^{b-j}w)S^{(j+1)}
 ={}(-1)^{d+1}\sum_{\ell=0}^d\binom d\ell
      \int_\Omega\partial_x^a\partial_z^{b-d}\Theta\,
           \partial_z^\ell\bigl(qS^{(j+1)}\bigr)
           \partial_x^{a+1}\partial_z^{b-j-1+d-\ell}v.
\end{equation}
The temperature factor has order at most \(s-1\) with a horizontal
derivative, or at most \(s-2\) without one.  Its \(L^2\) norm is
therefore bounded by \(C\|\mathcal M^{\frac12}\Theta\|_{H^s,\mathrm d}\).
The velocity factor has order \(n-j+d-\ell\le s+1\).  Whenever
\(j+\ell+2\le s\),
\[
 \|\partial_z^\ell(qS^{(j+1)})\|_\infty
 \le C\|q\|_{W^{2,\infty}}\|S\|_{H_z^{j+\ell+2}}
 \le CE_1^{\frac12}.
\]
Thus all such terms have the required bound.  The sole exception is
\(a=0\), \(n=s\), \(j=s-3\), and \(d=\ell=2\): indeed,
\(j\le n-3\) and \(d\le1\) unless \(a=0,n=s\).
For this exception,
\[
 \partial_{zz}(qS^{(s-2)})=qS^{(s)}+2q_zS^{(s-1)}+q_{zz}S^{(s-2)}.
\]
The last two terms are bounded in \(L^\infty\) by
\(CE_1^{\frac12}\).  The highest mean derivative is instead
estimated in \(L^2\):
\begin{equation}\label{eq:4.25}
 \left|\int_\Omega q\partial_z^{s-2}\Theta\,S^{(s)}v_{xzz}\right|
 \le{}C\|\partial_z^{s-2}\Theta\|_2\|S^{(s)}\|_{L^2(\Omega)}\|v_{xzz}\|_\infty
 \le{}CE_1^{\frac12}\mathcal D_s^{\frac12}
                 \|\mathcal M^{\frac12}\Theta\|_{H^s,\mathrm d}.
\end{equation}
Here \(\|v_{xzz}\|_\infty\le C\|v\|_{H^{s+1}}\).
In particular, no term of \eqref{eq:4.24} uses more than
\(s\) temperature derivatives.

Only \(a=0\) leaves a mean test factor.  Since
\(\langle w\rangle_x=0\), its contribution equals
\[
 \int_\Omega qS^{(n)}S^{(j+1)}\partial_z^{n-j}w
 =-\int_\Omega\frac{\Theta_z}{R(\gamma+S')}
                   S^{(n)}S^{(j+1)}\partial_z^{n-j}w.
\]
Using \eqref{eq:4.17}, \eqref{eq:4.23}, and \(j+2\le s\), its absolute
value is at most
\[
 C\|\mathcal M^{\frac12}\Theta\|_{H^s,\mathrm d}
   \|S\|_{H_z^s}\|S\|_{W_z^{j+1,\infty}}\|v\|_{H^{s+1}}
 \le CE_1\mathcal D_s^{\frac12}\|\mathcal M^{\frac12}\Theta\|_{H^s,\mathrm d}.
\]
Summing the mean-temperature terms, using \(E_1\le1\), gives
\begin{equation}\label{eq:4.26}
 \sum_{1\le|\alpha|\le s}\sum_{0<\eta<\alpha}\binom\alpha\eta
 \left|\int_\Omega q\partial^\alpha\theta(\partial^\eta w)
                  \partial_z\partial^{\alpha-\eta}S\right|
 \le{}C\mathcal C_vE_1+CE_1^{\frac12}\mathcal D_s^{\frac12}
                         \|\mathcal M^{\frac12}\Theta\|_{H^s,\mathrm d}.
\end{equation}
The fluctuation estimates obtained above likewise give
\begin{equation}\label{eq:4.27}
 \sum_{1\le|\alpha|\le s}\sum_{0<\eta<\alpha}\binom\alpha\eta
 \left|\int_\Omega q\partial^\alpha\theta(\partial^\eta w)
                  \partial_z\partial^{\alpha-\eta}\Theta\right|
 \le{}C\mathcal C_vE_1+CE_1^{\frac12}\mathcal D_s^{\frac12}
                         \|\mathcal M^{\frac12}\Theta\|_{H^s,\mathrm d}.
\end{equation}

It remains to estimate the momentum commutators.  The horizontal
term satisfies, by \eqref{eq:2.1.10},
\[
  \sum_{1\le|\alpha|\le s}
  \left|\int_\Omega \partial^\alpha v[\partial^\alpha,v]\partial_xv\right|
  \le C\|\nabla v\|_{L^\infty}\|v\|_{H^s}^2
  \le C\mathcal C_vE_1.
\]
For the vertical commutator, separate the highest-order terms:
\begin{equation*}
  [\partial^\alpha,w]\partial_zv
  =(\partial^\alpha w^0)\partial_zv
       +(\partial^\alpha\langle w\rangle_z)\partial_zv
     +\sum_{0<\eta<\alpha}\binom\alpha\eta
       (\partial^\eta w)\partial_z\partial^{\alpha-\eta}v.
\end{equation*}
By \eqref{eq:2.1.11} and \eqref{eq:4.16},
\begin{equation*}
\begin{aligned}
  &\sum_{1\le|\alpha|\le s}\sum_{0<\eta<\alpha}\binom\alpha\eta
     \left|\int_\Omega \partial^\alpha v(\partial^\eta w)
                   \partial_z\partial^{\alpha-\eta}v\right|\\
  \le{}&C\|\nabla w\|_{L^\infty}\|v\|_{H^s}^2
       +C\|v\|_{H^s}\|w\|_{H^{s-1}}\|v\|_{W^{2,\infty}}\\
  \le{}&C\mathcal C_vE_1+CE_1^{\frac12}\mathcal D_s.
\end{aligned}
\end{equation*}
The contribution of \(w^0\) at the highest order satisfies
\[
  \sum_{1\le|\alpha|\le s}
   \left|\int_\Omega \partial^\alpha v(\partial^\alpha w^0)\partial_zv\right|
  \le C\|v\|_{H^s}\|w^0\|_{H^s}\|\partial_zv\|_{L^\infty}
  \le CE_1^{\frac12}\mathcal D_s.
\]
If \(\alpha_2\ge1\), then
\(\partial^\alpha\langle w\rangle_z=0\).  For
\(\alpha=(a,0)\) with \(1\le a\le s-1\), we have
\[
  \left|\int_\Omega
       (\partial_x^av)(\partial_x^a\langle w\rangle_z)\partial_zv\right|
  \le C\|v\|_{H^s}\|\langle w\rangle_z\|_{H_x^{s-1}}
         \|\partial_zv\|_{L^\infty}
  \le CE_1\mathcal D_s.
\]
For the remaining horizontal endpoint, set
\[
 B_s:=\int_\Omega(\partial_x^s\langle w\rangle_z)\partial_zv\,\partial_x^sv,
 \qquad G_s:=\langle(\partial_zv)(\partial_x^sv)\rangle_z.
\]
The diagnostic formula \eqref{eq:2.2.1} and integration by parts in
\(x\) give
\begin{equation}\label{eq:4.28}
  B_s=-\frac{L_z}{\gamma}\int_{\mathbb T_x}
          (\partial_x^{s+1}\langle v\theta\rangle_z)G_s\,\mathrm dx
  =\frac{L_z}{\gamma}\int_{\mathbb T_x}
          (\partial_x^s\langle v\theta\rangle_z)\partial_xG_s\,\mathrm dx.
\end{equation}
The averaging and product estimates imply
\begin{equation*}
  \|\partial_x^s\langle v\theta\rangle_z\|_{L_x^2}
  \le C\|v\theta\|_{H^s}
  \le C\|v\|_{H^s}\|\theta\|_{H^s}
  \le CE_1^{\frac12}\mathcal D_s^{\frac12}.
\end{equation*}
Since
\[
  \partial_xG_s
  =\langle(\partial_x\partial_zv)(\partial_x^sv)
            +(\partial_zv)(\partial_x^{s+1}v)\rangle_z,
\]
Jensen's and H\"older's inequalities give
\begin{equation}\label{eq:4.29}
  \|\partial_xG_s\|_{L_x^2}
  \le C\bigl(
       \|\partial_x\partial_zv\|_{L^\infty}\|\partial_x^sv\|_2
       +\|\partial_zv\|_{L^\infty}\|\partial_x^{s+1}v\|_2
       \bigr)
  \le CE_1^{\frac12}\mathcal D_s^{\frac12}.
\end{equation}
Thus \eqref{eq:4.28} yields
\begin{equation*}
  |B_s|
  \le\frac{L_z}{\gamma}
       \|\partial_x^s\langle v\theta\rangle_z\|_{L_x^2}
       \|\partial_xG_s\|_{L_x^2}
  \le CE_1\mathcal D_s.
\end{equation*}
This estimate uses at most \(s\) derivatives of \(\theta\) and
\(s+1\) derivatives of \(v\).  Summing the momentum estimates and
using \(E_1\le1\), we conclude that
\begin{equation}\label{eq:4.30}
  \sum_{1\le|\alpha|\le s}
    \left|\int_\Omega \partial^\alpha v
       \bigl([\partial^\alpha,v]\partial_xv
            +[\partial^\alpha,w]\partial_zv\bigr)\right|
  \le C\mathcal C_vE_1+CE_1^{\frac12}\mathcal D_s.
\end{equation}

Combining \eqref{eq:4.18}, \eqref{eq:4.20},
\eqref{eq:4.26}--\eqref{eq:4.27}, and \eqref{eq:4.30}
in \eqref{eq:4.19}, we obtain
\begin{equation}\label{eq:4.31}
  \frac{\mathrm d}{\mathrm dt}E_2+c_D\mathcal D_s
  \le C_H\mathcal C_vE_2
       +C_HE_1^{\frac12}
          \bigl(\mathcal D_s+\|\mathcal M^{\frac12}\Theta\|_{H^s,\mathrm d}^2\bigr).
\end{equation}

Let \(C_X\) be the constant in \eqref{eq:4.6}, and \(C_D\) the
constant on the right of \eqref{eq:4.3}.  First choose
\(\lambda_E\in(0,1]\), then \(\delta_2\in(0,1]\), so that
\begin{equation*}
 \lambda_EC_X\le\frac{c_E}{2},\qquad
 \lambda_EC_D\le\frac{c_D}{4},\qquad
 C_s\delta_2\le\frac\gamma2,\qquad
 C_H\delta_2\le\min\left\{\frac{c_D}{4},\frac{\lambda_E}{4\gamma}\right\},
\end{equation*}
where \(\|\theta_z\|_\infty\le C_sE_1^{\frac{1}{2}}\).
For smooth solutions with \(E_1^{\frac{1}{2}}\le\delta_2\), these choices
give \eqref{eq:4.13}.  Adding \(\lambda_E\) times
\eqref{eq:4.3} to \eqref{eq:4.31} leaves positive coefficients
\(\frac{c_D}{2}\) and \(\frac{\lambda_E}{4\gamma}\) on the two dissipations.
This proves \eqref{eq:4.14} in the smooth class.

To justify the estimate at strong-solution regularity, apply Lemma~\ref{lem:4.1}.
The margin in \eqref{eq:4.12} ensures \(E_{1,n}^{\frac{1}{2}}\le\delta_2\) for large \(n\).
On \([0,T]\), the preceding convergences imply
\[
\begin{alignedat}{2}
 q_n\to{}&q
 &\qquad&\text{in }C_tH^{s-1}\cap C_tW^{1,\infty},\\
 \mathcal M^{\frac12}\Theta_n\to{}&\mathcal M^{\frac12}\Theta
 &\qquad&\text{in }C_tH^s.
\end{alignedat}
\]
Moreover, \(E_{j,n}\to E_j\) uniformly on \([0,T]\) for
\(j=1,2,3\), and
\[
 \mathcal D_{s,n}\to\mathcal D_s,
 \qquad
 \mathcal C_{v,n}\to\mathcal C_v
 \quad\text{in }L_t^1.
\]
For the last convergence, use \(w_n^0\to w^0\) in \(L_t^2H^s\)
and \(\langle w_n\rangle_z\to\langle w\rangle_z\) in
\(C_tH_x^{s-1}\), followed by Sobolev embedding and H\"older in time.
No \(C_tW^{2,\infty}\) convergence of \(w_n^0\) is needed.

We pass to the integrated identity \eqref{eq:4.19} after
performing the integrations by parts above.  For its highest-order
mean-velocity term, \eqref{eq:4.28} gives the two convergences
\[
 \partial_x^s\langle v_n\theta_n\rangle_z
       \to\partial_x^s\langle v\theta\rangle_z
            \quad\text{in }L_t^2L_x^2,
 \qquad \partial_xG_{s,n}\to\partial_xG_s
            \quad\text{in }L_t^2L_x^2.
\]
The product estimates and \eqref{eq:4.29} give these convergences.
By the compensated identity \eqref{eq:4.28},
\[
\begin{aligned}
 \|B_{s,n}-B_s\|_{L_t^1}
 \le{}&C\|\partial_x^s\langle v_n\theta_n-v\theta\rangle_z\|_{L_t^2L_x^2}
          \|\partial_xG_{s,n}\|_{L_t^2L_x^2}\\
 &+C\|\partial_x^s\langle v\theta\rangle_z\|_{L_t^2L_x^2}
          \|\partial_x(G_{s,n}-G_s)\|_{L_t^2L_x^2}\longrightarrow0.
\end{aligned}
\]
For the exceptional temperature pairings at \(s=4\), we use
\[
\begin{alignedat}{2}
 \Theta_{n,zz}\to{}&\Theta_{zz}
 &\qquad&\text{in }C_t(L_x^\infty L_z^2),\\
 q_{n,zz}\to{}&q_{zz}
 &\qquad&\text{in }C_t(L_x^2L_z^\infty),\\
 S_n^{(4)}\to{}&S^{(4)}
 &\qquad&\text{in }C_tL_z^2,\\
 v_{n,xzz}\to{}&v_{xzz}
 &\qquad&\text{in }L_t^2L^\infty.
\end{alignedat}
\]
The first convergence uses the horizontal derivative in
\eqref{eq:4.8}.  The second follows from the
formula for \(q_{zz}\) and \eqref{eq:2.1.9}.
Together with convergence of the fifth velocity derivatives in
\(L_t^2L^2\), these limits pass \eqref{eq:4.21} and
\eqref{eq:4.22} in \(L_t^1\).
The remaining fourth-order configurations pass to the limit using
the pairings established above.
For \(s\ge5\), one has \(q_n\to q\) in \(C_tW^{2,\infty}\).
The ordinary coefficients in \eqref{eq:4.24} satisfy
\[
 \partial_z^\ell(q_nS_n^{(j+1)})
 \longrightarrow\partial_z^\ell(qS^{(j+1)})
 \quad\text{in }C_tL^\infty,
 \qquad j+\ell+2\le s,\quad 0\le\ell\le2.
\]
Indeed, \(S_n\to S\) in \(C_tH_z^s\), so every mean derivative
of order at most \(s-1\) converges uniformly by the one-dimensional
Sobolev embedding.  These are the coefficient norms used in
\eqref{eq:4.24}.  For the exceptional term \eqref{eq:4.25},
use \(S_n^{(s)}\to S^{(s)}\) in \(C_tL_z^2\) and
\(v_{n,xzz}\to v_{xzz}\) in \(L_t^2L^\infty\).

All remaining commutator and weight terms converge in \(L_t^1\)
by the product bounds and subtraction of individual factors.
The limiting identity has an \(L^1\) right-hand side, so
\(E_2\in W^{1,1}(0,T)\).  The cross identity gives the same
regularity for \(X\), hence for \(E_3\).  Passing to the limit in the
integrated inequality between arbitrary endpoints proves
\eqref{eq:4.14} at strong-solution regularity.
\end{proof}

To estimate the spectral energy, we need product bounds that retain
the hydrostatic weight in the vertical transport and diagnostic mean.
At the fourth-order endpoint, an \(H^3\) coefficient replaces the
Wiener coefficient used at higher orders.
\begin{lemma}\label{lem:4.2}
\begin{enumerate}
\item[\textup{(i)}] For \(r\ge0\), \(\varphi\in\mathcal W^{r+2}\), and
\(f\in H^r\) with \(\mathcal Qf=f\),
\[
 \|\mathcal M^{\frac12}\mathcal Q(\varphi f)\|_{H^r}
 +\|\langle\varphi f\rangle_z\|_{H_x^r}
 \le C_r\|\varphi\|_{\mathcal W^{r+2}}
          \|\mathcal M^{\frac12}f\|_{H^r}.
\]
The constants are bounded when \(r\) ranges over a fixed compact
subset of \([0,\infty)\).
\item[\textup{(ii)}] For \(\varphi\in H^3\) and \(f\in L^2\) with
\(\mathcal Qf=f\),
\begin{equation}\label{eq:4.32}
 \|\mathcal M^{\frac12}\mathcal Q(\varphi f)\|_2
 +\|\langle\varphi f\rangle_z\|_{L_x^2}
 \le C\|\varphi\|_{H^3}\|\mathcal M^{\frac12}f\|_2.
\end{equation}
The constant depends only on the fixed parameters and periods.
\end{enumerate}
\end{lemma}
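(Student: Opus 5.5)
The plan is to prove both parts on the Fourier side. The products become convolutions, and the multiplier weight is transferred from the output frequency to the input frequency at the cost of a power of the difference frequency. Write \(\widehat{\varphi f}(k,m)=\sum_{(k',m')}\widehat\varphi(k-k',m-m')\widehat f(k',m')\), and set \(d=(k-k',m-m')\). Since \(\mathcal Qf=f\), only inputs with \(k',m'\ne0\) occur. Since \(\rho\) vanishes when \(k=0\) or \(m=0\), one has \(\mathcal M^{\frac12}\mathcal Q=\mathcal M^{\frac12}\), so only outputs in \(\mathcal K_*^x\) matter for the first term. For the vertical mean, the outputs are the modes \((k,0)\).

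\textbf{Step 1 (pointwise weight transfer for \(\mathcal M^{\frac12}\mathcal Q\)).} Use \(\rho^{\frac12}\simeq |k|\,|m|^{-1}\langle k,m\rangle^{-1}\) on \(\mathcal K_*^x\). The aim is the bound
\[
 \rho(k,m)^{\frac12}\langle k,m\rangle^r\le C_r\langle d\rangle^{r+2}\rho(k',m')^{\frac12}\langle k',m'\rangle^r .
\]
The factor \(\langle k,m\rangle^r/\langle k',m'\rangle^r\) is handled by Peetre's inequality and costs \(\langle d\rangle^{r}\). The ratio \(|m'|/|m|\) is at most \(C\langle d\rangle\), since nonzero lattice frequencies are bounded below. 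For the remaining factor, use \(|k|\langle k',m'\rangle\le|k|(\langle k,m\rangle+\langle d\rangle)\le 2\langle k,m\rangle\langle d\rangle\) together with \(|k'|\ge c\). This gives \(|k|\langle k',m'\rangle/(|k'|\langle k,m\rangle)\le C\langle d\rangle\), so this factor also costs a single power of \(\langle d\rangle\). Young's inequality \(\ell^1*\ell^2\subset\ell^2\) then gives the first term of part (i), with the constant \(C_r\|\varphi\|_{\mathcal W^{r+2}}\). The Peetre constants are locally bounded in \(r\), which gives the uniformity on compact sets of \(r\).

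\textbf{Step 2 (vertical mean).} For an output \((k,0)\), the difference frequency is \(d=(k-k',-m')\), so \(|m'|\le|d|\). Use \(\rho(k',m')^{-\frac12}\le C|m'|\langle k',m'\rangle/|k'|\) and split \(\langle k\rangle^r\le C(\langle k'\rangle^r+\langle d\rangle^r)\). In the first piece, \(\langle k'\rangle^{r}/|k'|\le C\langle k',m'\rangle^{r-1}\), and the leftover factor is \(|m'|\le\langle d\rangle\). In the second piece, \(\langle k',m'\rangle^{1-r}\) is bounded by \(\langle d\rangle^{r}\) when \(r\ge1\). When \(r<1\), use \((\langle k'\rangle+|m'|)^{1-r}\) with \(\langle k'\rangle^{1-r}/|k'|\le C\). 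In all cases the weight is at most \(C\langle d\rangle^{r+2}\), and Young's inequality followed by Cauchy--Schwarz in \(m'\) gives the \(H_x^r\) bound.

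\textbf{Step 3 (part (ii), the main obstacle).} At \(r=0\) the kernel weight from Steps 1--2 is \(\langle d\rangle\langle d_m\rangle\), where \(d_m=m-m'\). However, \(\varphi\in H^3(\Omega)\) does not put \(\langle d\rangle^2\widehat\varphi\) in \(\ell^1\): the sum \(\sum_d \langle d_m\rangle^2\langle d\rangle^{-4}\) diverges logarithmically. Young's inequality with a plain \(\ell^1\) kernel therefore fails, and I will replace it by a sharper estimate. I would first try to sharpen the case analysis of Step 1. In the region \(|k|\le2|k'|\) the \(k\)-ratio factor is \(O(1)\) instead of \(\langle d\rangle\). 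In the complementary region \(|d_k|\ge|k|/2\), the gain \(|k|/\langle k,m\rangle\le1\) is used directly. The resulting kernels are then bounded by a Schur test in the anisotropic form \(\ell^2_{k}\ell^1_{m}\), using \(\sum_{d_m}\langle d_m\rangle^{2}\langle d\rangle^{-6+2\epsilon}\lesssim\langle d_k\rangle^{-3+2\epsilon}\), which is summable in \(d_k\). In physical space this amounts to pairing \(\varphi\) in mixed norms such as \(L_x^\infty L_z^2\) and \(L_x^2L_z^\infty\), as in \eqref{eq:2.1.9}, which \(H^3\) does control. If this route does not close, the fallback is to commute directly: write \(\mathcal M^{\frac12}(\varphi f)=\varphi\,\mathcal M^{\frac12}f+[\mathcal M^{\frac12},\varphi]f\), and estimate the commutator using the smoothing of order roughly one in the symbol of \(\mathcal M^{\frac12}\). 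This would require only \(\nabla\varphi\in H^2\) in suitable mixed norms. The mean term in part (ii) is handled the same way, since its kernel weight only involves \(\langle d\rangle\) and \(|m'|\le|d|\).
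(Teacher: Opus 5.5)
\textbf{Part (i).} This part is correct and follows the paper's route. You transfer the weight pointwise, $\rho(\xi)^{\frac12}\langle\xi\rangle^r\le C_r\langle\xi-\xi'\rangle^{r+2}\rho(\xi')^{\frac12}\langle\xi'\rangle^r$, and then apply Young's inequality; the vertical mean is handled through $|m'|\le|\xi-\xi'|$.

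Two intermediate inequalities are false as written, although the conclusions survive:
\begin{itemize}
\item $|k|(\langle k,m\rangle+\langle d\rangle)\le2\langle k,m\rangle\langle d\rangle$ fails when $|k|>\langle d\rangle$. Instead split $|k|\langle k',m'\rangle\le|k|\langle k,m\rangle+|k|\,|d|$. Use $|k|/|k'|\le1+|d_k|/|k'|\le C\langle d\rangle$ on the first piece and $|k|\le\langle k,m\rangle$ on the second.
\item $\langle k'\rangle^r/|k'|\le C\langle k',m'\rangle^{r-1}$ fails for $r<1$. The paper simply uses $\rho(\xi')^{-\frac12}\le C(1+m'^2)$ from \eqref{eq:2.3.1}.
\end{itemize}

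\textbf{Part (ii): a genuine gap.} You diagnose the logarithmic failure correctly, but the proposed repair rests on kernel bounds that do not hold. Write the weight exactly:
\[
 \frac{\rho(\xi)^{\frac12}}{\rho(\xi')^{\frac12}}
 =\frac{|k|}{|m|\sqrt{a(\xi)}}\left(\frac{\mu_x|k'|\,|m'|}{\sqrt{a(\xi')}}
   +\frac{\mu_z|m'|^3}{|k'|\sqrt{a(\xi')}}\right)
 \le C\langle d\rangle+C\frac{\langle d_m\rangle^2}{|k'|}.
\]
The last term is attained. Take $\xi'=(k_1,M)$ with $k_1=\frac{2\pi}{L_x}$ and $\xi=(k_1+D,\frac{2\pi}{L_z})$. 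The ratio is then comparable to $M^2\simeq\langle d_m\rangle^2$ for every admissible $D$.
\begin{itemize}
\item At $D=0$ this configuration lies in your region $|k|\le2|k'|$. So the kernel there is not reduced to $O(\langle d_m\rangle\langle d\rangle^{\epsilon})$, which is the only size your Cauchy--Schwarz computation handles.
\item In the complementary region, using only $|k|/\langle k,m\rangle\le1$ leaves $|m'|\langle\xi'\rangle/(|m||k'|)$. This is not even bounded by a function of $d$: for $\xi'=(k_1,M)$, $\xi=(3k_1,M)$ it is about $M$ at $d=(2k_1,0)$, while the true ratio is about $3$.
\item More fundamentally, any majorant depending on $d$ alone is $\gtrsim\langle d_m\rangle^2$ on the cone $|d_k|\le|d_m|$. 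The sum $\sum_{|d_k|\le|d_m|}\langle d_m\rangle^2|\widehat\varphi(d)|$ can diverge for $\varphi\in H^3$. Since a nonnegative convolution kernel acts on $\ell^2$ with norm equal to its $\ell^1$ norm, no translation-invariant Young or Schur bound can close.
\end{itemize}

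\textbf{The missing idea.} You need to retain the factor $|k'|^{-1}$. It places the input in $\ell^1_{k'}\ell^2_{m'}$, while $d_k\mapsto\sum_{d_m}\langle d_m\rangle^2|\widehat\varphi(d_k,d_m)|$ lies in $\ell^2_{d_k}$ with norm at most $C\|\varphi\|_{H^3}$. This is exactly the paper's physical-space argument:
\begin{itemize}
\item With $h=\mathcal L^{-\frac12}\mathcal Tf$, write $f=\partial_zF+\partial_{zz}G$.
\item Since $\partial_xG\in L^2$, one gets $G\in L_x^\infty L_z^2$.
\item Redistribute derivatives by \eqref{eq:4.33}, and pair $\varphi_{zz}\in L_x^2L_z^\infty$ with $G$ via \eqref{eq:2.1.9}.
\end{itemize}
The vertical mean has the same $\langle d_m\rangle^2/|k'|$ kernel (take $\xi'=(k_1,M)$). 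So ``handled the same way'' inherits the gap.

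\textbf{The commutator fallback.} This does not fill the gap either. The symbol $\rho^{\frac12}$ is of order one on the modes $(k,\frac{2\pi}{L_z})$ with $|k|\to\infty$, and of size $m^{-2}$ on the modes $(\frac{2\pi}{L_x},m)$. Hence $\mathcal M^{\frac12}$ is an anisotropic, non-classical multiplier, not a smoothing operator of order one. Bounding $[\mathcal M^{\frac12},\varphi]f$ by $\|\mathcal M^{\frac12}f\|_2$ is the original problem.
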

\begin{proof}
For part~\textup{(i)}, let
\(\xi=(k,m),\eta=(\ell,n)\in\mathcal K_*^x\).  We use
\begin{equation*}
 \frac{\rho(\xi)^{\frac12}}{\rho(\eta)^{\frac12}}
 \le C\langle\xi-\eta\rangle^2.
\end{equation*}
If \(|\xi-\eta|\ge\frac{|\eta|}{2}\), this follows from
\eqref{eq:2.3.1}.  Otherwise \(|\xi|\) and \(|\eta|\) are comparable,
and
\[
 \frac{\rho(\xi)^{\frac12}}{\rho(\eta)^{\frac12}}
 =\frac{|k||n|}{|\ell||m|}\left(\frac{a(\eta)}{a(\xi)}\right)^{\frac12}
 \le C(1+|k-\ell|)(1+|m-n|).
\]
Therefore
\[
 \langle\xi\rangle^r\rho(\xi)^{\frac12}|\widehat{\varphi f}(\xi)|
 \le C_r\sum_{\eta\in\mathcal K_*^x}
 \langle\xi-\eta\rangle^{r+2}|\widehat\varphi(\xi-\eta)|
 \langle\eta\rangle^r\rho(\eta)^{\frac12}|\widehat f(\eta)|.
\]
Young's inequality yields the first norm.  For output \(\xi=(k,0)\),
\[
 \frac{\langle k\rangle^r}{\langle\eta\rangle^r\rho(\eta)^{\frac12}}
 \le C_r\langle\xi-\eta\rangle^r(1+n^2)
 \le C_r\langle\xi-\eta\rangle^{r+2},
\]
which gives the vertical-average norm by the same convolution bound.
Fourier truncation extends the calculation to the stated spaces.
The weight constants are bounded on compact intervals of \(r\).

For part~\textup{(ii)}, first take trigonometric polynomials.  Set
\(h=\mathcal L^{-\frac12}\mathcal Tf\), and define
\(\partial_x^{-1}\) by the symbol \((ik)^{-1}\) for \(k\ne0\)
and zero for \(k=0\).  On the range of \(\mathcal Q\),
\[
 \|h\|_2=\frac1N\|\mathcal M^{\frac12}f\|_2,
 \qquad
 f=\partial_z\partial_x^{-1}\mathcal L^{\frac12}h.
\]
The identity
\(\mathcal L^{\frac12}
=-\mu_x\partial_x\mathcal L^{-\frac12}\partial_x
 -\mu_z\partial_z\mathcal L^{-\frac12}\partial_z\)
gives
\begin{equation*}
 f=\partial_zF+\partial_{zz}G,
 \qquad F=-\mu_x\mathcal L^{-\frac12}\partial_xh,
 \qquad G=-\mu_z\partial_x^{-1}\mathcal L^{-\frac12}\partial_zh.
\end{equation*}
The symbols \(\frac{|k|}{\sqrt a}\),
\(\frac{|m|}{|k|\sqrt a}\), and \(\frac{|m|}{\sqrt a}\)
are bounded on \(\mathcal K_*^x\).  Hence
\[
 \|F\|_2+\|G\|_2+\|\partial_xG\|_2
 +\|G\|_{L_x^\infty L_z^2}\le C\|h\|_2.
\]
The product has the exact expansion
\begin{equation}\label{eq:4.33}
 \varphi f=\partial_z(\varphi F)-\varphi_zF
       +\partial_{zz}(\varphi G)-2\partial_z(\varphi_zG)
       +\varphi_{zz}G.
\end{equation}
Write \(\mathcal B=\mathcal L^{-\frac12}\mathcal T\), with its
symbol extended by zero at \(m=0\).  Then
\[
 \mathcal B\mathcal Q=\mathcal B,
 \qquad
 \mathcal B\partial_z=\mathcal L^{-\frac12}\partial_x\Pi_{\ne0}^z,
 \qquad
 \mathcal B\partial_{zz}=\mathcal L^{-\frac12}\partial_z\partial_x.
\]
These identities retain the zero-mode projection.  In particular,
\[
 \mathcal B\partial_{zz}(\varphi G)
 =\mathcal L^{-\frac12}\partial_z
       \bigl(\varphi_xG+\varphi\partial_xG\bigr).
\]
The boundedness of \(\mathcal B\),
\(\mathcal L^{-\frac12}\partial_x\), and
\(\mathcal L^{-\frac12}\partial_z\) on the indicated subspaces
controls all terms except \(\varphi_{zz}G\) by
\(C\|\varphi\|_{W^{1,\infty}}\|h\|_2\).  For the last term,
\eqref{eq:2.1.9} gives
\[
 \|\varphi_{zz}G\|_2
 \le\|\varphi_{zz}\|_{L_x^2L_z^\infty}
       \|G\|_{L_x^\infty L_z^2}
 \le C\|\varphi\|_{H^3}\|h\|_2.
\]
Since \(H^3(\Omega)\hookrightarrow W^{1,\infty}(\Omega)\)
and \(\|\mathcal M^{\frac12}u\|_2=N\|\mathcal Bu\|_2\),
this proves the first term in \eqref{eq:4.32}.
Taking the vertical average of \eqref{eq:4.33}
yields
\[
 \langle\varphi f\rangle_z
 =-\langle\varphi_zF\rangle_z+\langle\varphi_{zz}G\rangle_z.
\]
The same two product bounds and the boundedness of averaging from
\(L^2(\Omega)\) to \(L_x^2\) prove the second term.  Finally,
approximate \(\varphi\) in \(H^3\) and \(f\) in \(L^2\) by
Fourier truncations.  The products converge in \(L^2\), so the
bounded multipliers and averaging operators pass to the limit.
\end{proof}

We use this lemma to absorb the vertical transport into the spectral
dissipation, leaving the horizontal transport as the forcing.
For \(\lambda_{\mathrm M}>0\), define
\begin{equation*}
 E_{5,r}=\|\mathcal Mv\|_{H^r}^2
 +\frac{g\beta}{\gamma}\|\mathcal M\Theta\|_{H^r}^2
 +\lambda_{\mathrm M}\operatorname{Re}
       \langle\mathcal L^{-1}\mathcal T\mathcal Mv,\mathcal M\Theta\rangle_{H^r}.
\end{equation*}
\begin{proposition}\label{prop:4.2}
Let \(s\ge4\) be an integer.  There exist
\(\delta_{\mathrm M}\in(0,1]\), \(\lambda_{\mathrm M}>0\), and
\(c,C>0\) such that every compatible \(H^s\)-strong solution on
\([0,T]\), with \(\sup_{t\le T}E_1(t)^{\frac{1}{2}}\le\delta_{\mathrm M}\),
has the following properties.  For
\(0\le r\le\max\{0,s-5\}\), \(E_{5,r}\in W^{1,1}(0,T)\),
\begin{equation}\label{eq:4.34}
 c\bigl(\|\mathcal Mv\|_{H^r}^2+\|\mathcal M\Theta\|_{H^r}^2\bigr)
 \le E_{5,r}
 \le C\bigl(\|\mathcal Mv\|_{H^r}^2+\|\mathcal M\Theta\|_{H^r}^2\bigr),
\end{equation}
and, for almost every \(t\in(0,T)\),
\begin{equation}\label{eq:4.35}
 \frac{\mathrm d}{\mathrm dt}E_{5,r}
 +c\bigl(\|\mathcal L^{\frac12}\mathcal Mv\|_{H^r}^2
             +\|\mathcal M^{\frac32}\Theta\|_{H^r}^2\bigr)
 \le C\bigl(\|v\partial_xv\|_{H^r}
                  +\|v\partial_x\Theta\|_{H^r}\bigr)^2.
\end{equation}

The constants are independent of \(T\) and the solution, and are
uniform over the stated range of \(r\) for fixed \(s\).
\end{proposition}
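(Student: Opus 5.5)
The plan is to apply \(\mathcal M\) to the projected equations and repeat, at order \(r\), the energy-plus-cross-term argument behind \eqref{eq:4.3}. The point is to move the vertical transport into the linear part and absorb it, so that only horizontal transport remains as forcing.

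Since \(\mathcal M\) commutes with \(\mathcal L\), \(\mathcal T\) and the projections, and kills \(\Pi_0^x\), the pair \((u,f)=(\mathcal Mv,\mathcal M\Theta)\) satisfies \eqref{eq:2.3.11} with
\[
F_1=\mathcal M\Pi_{\ne0}^z\mathcal N_1,\qquad F_2=\mathcal M\mathcal Q\mathcal N_2 .
\]
Because \(r\le s-5\) and \(\mathcal M\) is bounded, \(u,f\in C_tH^{r+4}\), \(u\in L_t^2H^{r+5}\), and \(F_i\in L_t^2H^{r+3}\). So \eqref{eq:2.3.13} and \eqref{eq:2.3.12} hold at order \(r\) through \eqref{eq:2.3.9}, which also gives \(E_{5,r}\in W^{1,1}\). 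The equivalence \eqref{eq:4.34} follows from
\[
|\langle\mathcal L^{-1}\mathcal Tu,f\rangle_{H^r}|\le C\|u\|_{H^r}\|f\|_{H^r},
\]
which holds because \(\mathcal L^{-1}\mathcal T\) has bounded symbol, once \(\lambda_{\mathrm M}\) is small.

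Next, I add \(\lambda_{\mathrm M}\) times \eqref{eq:2.3.12} to \eqref{eq:2.3.13}. The linear cross terms are handled as in \eqref{eq:4.3}, using
\[
|\langle\mathcal Tu,f\rangle_{H^r}|\le N^{-1}\|\mathcal L^{\frac12}u\|_{H^r}\|\mathcal M^{\frac12}f\|_{H^r},\qquad \|\mathcal L^{-\frac12}\mathcal Tu\|\le C\|\mathcal L^{\frac12}u\|.
\]
For small \(\lambda_{\mathrm M}\), Young's inequality leaves the dissipation
\[
c\bigl(\|\mathcal L^{\frac12}\mathcal Mv\|_{H^r}^2+\|\mathcal M^{\frac32}\Theta\|_{H^r}^2\bigr).
\]

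The source pairings have the form \(\langle\mathcal MN_1,u+\lambda\mathcal L^{-1}\mathcal Tf\rangle\) and \(\langle\mathcal MN_2,f+\lambda\mathcal L^{-1}\mathcal Tu\rangle\). I split each nonlinearity into horizontal transport (\(v\partial_xv\), \(v\partial_x\Theta\)) and vertical transport (\(w\partial_zv\), \(w\partial_z\theta\)).

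For the horizontal parts, I move one \(\mathcal M^{1/2}\) onto the test function and use that \(\mathcal M^{1/2}\) is bounded. The test side is then controlled by the dissipation through
\[
\|\mathcal M^{\frac12}\mathcal Mv\|\le C\|\mathcal L^{\frac12}\mathcal Mv\|,\qquad \|\mathcal M^{\frac12}\mathcal L^{-1}\mathcal T\mathcal M\Theta\|\le C\|\mathcal M^{\frac32}\Theta\|
\]
(from \eqref{eq:2.3.5}) and \(\|\mathcal M^{\frac12}f\|=\|\mathcal M^{\frac32}\Theta\|\). Young's inequality then yields the right side of \eqref{eq:4.35}.

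The vertical transport is the main obstacle. For \(w\partial_z\Theta\) and \(w\,S'\), with \(w=w^0+\langle w\rangle_z\), I would write
\[
\widehat{w^0}=-\tfrac{\lambda_+}{\gamma}\widehat\Theta-\tfrac km\widehat v_{\mathrm f}
\]
as in \eqref{eq:3.2.8}. A cleaner route uses \(\mathcal M\)-weighted product bounds. Lemma~\ref{lem:4.2}(i), with \(\varphi=\partial_z\Theta\) or \(S'\) (which lie in \(\mathcal W^{r+2}\) since \(r+4\le s-1\)), gives
\[
\|\mathcal M^{\frac12}\mathcal Q(\varphi\,\mathcal Tv)\|_{H^r}\le C\delta_{\mathrm M}\|\mathcal M^{\frac12}\mathcal Tv\|_{H^r}.
\]
The remaining step is to show that \(\mathcal M^{\frac12}\mathcal Tv=\mathcal T\mathcal M^{\frac12}v\) is controlled by the dissipation of \(\mathcal Mv\) and \(\mathcal M^{\frac32}\Theta\). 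For this I would again decompose \(v=\mathcal H_+\Theta+v_{\mathrm f}\), with the multiplier bounds \eqref{eq:3.2.9}. Where this fails at the fast branch, I would keep \(\mathcal L^{\frac12}\mathcal Mv\) and exploit that \(\rho\) is bounded. The mean part \(\langle w\rangle_z\) is a product handled by Lemma~\ref{lem:4.2}(i) and \eqref{eq:2.2.4}. The momentum term \(w\partial_zv\) is treated similarly, with \(\varphi=\partial_zv\).

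All vertical contributions are then bounded by \(C\delta_{\mathrm M}\) times the dissipation and are absorbed once \(\delta_{\mathrm M}\) is small. This gives \eqref{eq:4.35} with constants uniform in \(r\in[0,\max\{0,s-5\}]\), since the constants in Lemma~\ref{lem:4.2}(i) are bounded on compact \(r\)-ranges.
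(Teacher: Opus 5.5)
Your overall scheme matches the paper's for \(s\ge5\). You combine the energy and cross identities for \((\mathcal Mv,\mathcal M\Theta)\) and move the horizontal transport to the right-hand side. You then use Lemma~\ref{lem:4.2}\textup{(i)} with \(\varphi=\theta_z\) or \(v_z\) and \(f=w^0\), together with \eqref{eq:2.2.4} for \(\langle w\rangle_z\), to make the vertical transport small relative to \(\|\mathcal M^{\frac12}w^0\|_{H^r}\). There are, however, two genuine gaps.

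First, the statement includes \(s=4\), \(r=0\), and there your key input fails. The inequality \(r+4\le s-1\) is false, and \(\theta_z\in H^3\) does not embed into \(\mathcal W^2\). Also, \(\|v_z\|_{\mathcal W^2}\) is only square-integrable in time through the dissipation, not pointwise small. So Lemma~\ref{lem:4.2}\textup{(i)} is unavailable, and you give no substitute. The paper uses Lemma~\ref{lem:4.2}\textup{(ii)}, the \(H^3\)-coefficient bound
\[
\|\mathcal M^{\frac12}\mathcal Q(\varphi f)\|_2+\|\langle\varphi f\rangle_z\|_{L_x^2}\le C\|\varphi\|_{H^3}\|\mathcal M^{\frac12}f\|_2 .
\]
It is proved by writing \(f=\partial_zF+\partial_{zz}G\) and pairing \(\varphi_{zz}\in L_x^2L_z^\infty\) with \(G\in L_x^\infty L_z^2\). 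This endpoint is not cosmetic: \(E_{5,0}\) at \(s=4\) is the quantity \(Y\) that drives the fourth-order bootstrap in the proof of Theorem~\ref{thm:1.1}.

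Second, the step you call the remaining one is left open, and the mechanism you suggest would not close it. Using \(|k/m|\le C\sqrt a\) from \eqref{eq:3.2.9} bounds the symbol of \(\mathcal M^{\frac12}\mathcal T\) by \(C\rho^{\frac12}a^{\frac12}\). Boundedness of \(\rho\) only gives \(\rho\le C\rho^{\frac12}\), which is the wrong direction for comparison with the dissipation symbol \(a^{\frac12}\rho\). The split \(v=\mathcal H_+\Theta+v_{\mathrm f}\) handles the slow part but leaves exactly the same quantity on the fast part. What you need is the exact relation \(|k/m|=N^{-1}(a\rho)^{\frac12}\), that is,
\[
\|\mathcal L^{\frac12}\mathcal Mv\|_{H^r}^2=N^2\|\mathcal M^{\frac12}w^0\|_{H^r}^2 ,
\]
which follows from \(a\rho^2=N^2\rho k^2/m^2\). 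With it, the term \(CE_1\|\mathcal M^{\frac12}w^0\|_{H^r}^2\) from the vertical transport is absorbed directly into the velocity dissipation for small \(\delta_{\mathrm M}\), with no spectral decomposition.

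Separately, \(\langle w\rangle_z\) contains \(\langle v\partial_x\Theta\rangle_z\). This term is not bounded by the dissipation, so it must be placed on the right-hand side of \eqref{eq:4.35} rather than absorbed.
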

\begin{proof}
For \(s\ge5\) and \(0\le r\le s-5\), \eqref{eq:2.1.7} gives
\[
 \|v_z\|_{\mathcal W^{r+2}}+\|\theta_z\|_{\mathcal W^{r+2}}
 \le C\bigl(\|v\|_{H^{r+5}}+\|\theta\|_{H^{r+5}}\bigr)
 \le CE_1^{\frac12}.
\]
At \(s=4,r=0\), we instead use
\(\|v_z\|_{H^3}+\|\theta_z\|_{H^3}\le CE_1^{\frac12}\).
Thus Lemma~\ref{lem:4.2}\textup{(i)} applies when \(s\ge5\), and
part~\textup{(ii)} applies when \(s=4,r=0\), with
\(f=w^0\) supported on \(\mathcal K_*^x\).
Applying the weighted product estimate to the mean identity
\eqref{eq:2.2.4} yields
\begin{equation*}
 \|\langle w\rangle_z\|_{H_x^r}
 \le CE_1^{\frac12}\|\mathcal M^{\frac12}w^0\|_{H^r}
       +C\|v\partial_x\Theta\|_{H^r}.
\end{equation*}
The ordinary product estimate gives
\begin{equation}\label{eq:4.36}
 \|\langle w\rangle_z\|_{H_x^r}
 \le CE_1^{\frac12}\|w^0\|_{H^r}+C\|v\partial_x\Theta\|_{H^r}.
\end{equation}

For \(f=v\) or \(\theta\), the same weighted bound controls
\(\mathcal M^{\frac{1}{2}}\mathcal Q(w^0f_z)\).
The remaining product obeys
\(\|\mathcal M^{\frac{1}{2}}\mathcal Q(\langle w\rangle_z f_z)\|_{H^r}
 \le CE_1^{\frac{1}{2}}\|\langle w\rangle_z\|_{H_x^r}\),
using \(\mathcal W^r\) coefficients for \(s\ge5\) and bounded
coefficients at \(s=4\).  Hence, for \(E_1\le1\),
\begin{equation}\label{eq:4.37}
 \sum_{i=1}^2\|\mathcal M^{\frac12}\mathcal Q\mathcal N_i\|_{H^r}
 \le CE_1^{\frac12}\|\mathcal M^{\frac12}w^0\|_{H^r}
      +C\bigl(\|v\partial_xv\|_{H^r}+\|v\partial_x\Theta\|_{H^r}\bigr).
\end{equation}
\begin{equation}\label{eq:4.38}
 \sum_{i=1}^2\|\mathcal N_i\|_{H^r}
 \le CE_1^{\frac12}\|w^0\|_{H^r}
      +C\bigl(\|v\partial_xv\|_{H^r}+\|v\partial_x\Theta\|_{H^r}\bigr).
\end{equation}

Both estimates retain \(w\partial_z\langle\theta\rangle_x\) and
require no decay of the mean temperature.

Apply \eqref{eq:2.3.13} and \eqref{eq:2.3.12} to
\((u,f)=(\mathcal Mv,\mathcal M\Theta)\), with sources
\(\mathcal M\mathcal Q\mathcal N_i\), in the Fourier \(H^r\) pairing.
Since \(r\le s-1\) throughout the stated range, the strong-solution
regularity and boundedness of \(\mathcal M\) place both unknowns
in \(H_t^1H^r\).  The velocity also has the spatial derivatives
required for the viscous pairing.
The linear cross terms are bounded by
\[
 C\|\mathcal L^{\frac{1}{2}}\mathcal Mv\|_{H^r}
          \|\mathcal M^{\frac{3}{2}}\Theta\|_{H^r}
       +C\|\mathcal L^{\frac{1}{2}}\mathcal Mv\|_{H^r}^2.
\]
Transfer half a power of \(\mathcal M\) in each source pairing, as in
\[
 \langle\mathcal M\mathcal Q\mathcal N_2,\mathcal M\Theta\rangle_{H^r}
 =\langle\mathcal M^{\frac{1}{2}}\mathcal Q\mathcal N_2,
                         \mathcal M^{\frac{3}{2}}\Theta\rangle_{H^r}.
\]
We estimate the remaining source terms using the boundedness of
\(\mathcal L^{-1}\mathcal T\) and
\(\mathcal M^{\frac12}\mathcal L^{-\frac12}\).
For the direct source pairing, we have
\[
 |\langle\mathcal M\mathcal Q\mathcal N_1,\mathcal Mv\rangle_{H^r}|
 \le C\|\mathcal M^{\frac12}\mathcal Q\mathcal N_1\|_{H^r}
       \|\mathcal L^{\frac12}\mathcal Mv\|_{H^r}.
\]
The two cross terms satisfy
\[
 |\langle\mathcal L^{-1}\mathcal T\mathcal M\mathcal Q\mathcal N_1,
          \mathcal M\Theta\rangle_{H^r}|
 \le C\|\mathcal M^{\frac12}\mathcal Q\mathcal N_1\|_{H^r}
       \|\mathcal M^{\frac32}\Theta\|_{H^r},
\]
and
\[
 |\langle\mathcal L^{-1}\mathcal T\mathcal Mv,
          \mathcal M\mathcal Q\mathcal N_2\rangle_{H^r}|
 \le C\|\mathcal M^{\frac12}\mathcal Q\mathcal N_2\|_{H^r}
       \|\mathcal L^{\frac12}\mathcal Mv\|_{H^r}.
\]
First choose \(\lambda_{\mathrm M}\) small enough for energy
equivalence and linear absorption.  Young's inequality then yields
\begin{equation*}
 \frac{\mathrm d}{\mathrm dt}E_{5,r}
 +c\bigl(\|\mathcal L^{\frac12}\mathcal Mv\|_{H^r}^2
              +\|\mathcal M^{\frac32}\Theta\|_{H^r}^2\bigr)
 \le C\sum_{i=1}^2
       \|\mathcal M^{\frac12}\mathcal Q\mathcal N_i\|_{H^r}^2.
\end{equation*}

The exact relation
\begin{equation*}
 \|\mathcal L^{\frac12}\mathcal Mv\|_{H^r}^2
 =N^2\|\mathcal M^{\frac12}w^0\|_{H^r}^2,
\end{equation*}
follows from \(a\rho^2=N^2\rho \frac{k^2}{m^2}\).  Both sides vanish at
\(k=0\).  Substitute \eqref{eq:4.37} and choose
\(C\delta_{\mathrm M}^2\le \frac{cN^2}{2}\).  The vertical transport terms
are absorbed, proving \eqref{eq:4.35}.
The weighted product constants are bounded on \([0,s-5]\) for
\(s\ge5\).  At \(s=4\), only \(r=0\) occurs.  This gives the
stated uniformity and completes the proof.
\end{proof}

\subsection{Proof of Theorem~\ref{thm:1.1}}\label{subsec:4.2}
The high-order energy, spectral decay, and diagnostic time integral
are estimated jointly on finite intervals of the local lifespan.
These bounds yield global continuation and convergence without
assuming the decay estimates of Theorem~\ref{thm:1.3}.

\begin{proof}[Proof of Theorem~\ref{thm:1.1}]
We first work at order four and set
\[
\varepsilon=\|v_0\|_{H^4}+\|\theta_0\|_{H^4}.
\]
The case \(\varepsilon=0\) is immediate, so assume \(\varepsilon>0\)
and fix \(0<T<T_{\max}^{(4)}\).
By \eqref{eq:2.2.5} and \eqref{eq:2.2.8}, the sources
\(F_i=\Pi_{\ne0}^z\mathcal N_i\) belong to \(C([0,T];H^3)\)
and have zero vertical means.  Thus Lemma~\ref{lem:3.3} applies
to \eqref{eq:2.2.3}, with source norms bounded by those of
\(\mathcal N_i\) in both Sobolev and Wiener spaces.

For the horizontal zero mode, incompressibility gives
\[
 \mathcal N_1=-\partial_x(v^2)-\partial_z(wv),\qquad
 \langle\mathcal N_1\rangle=0,\qquad
 \Pi_0^xF_1=\Pi_0^x\mathcal N_1.
\]
Hence \eqref{eq:3.2.11} has source \(\Pi_0^x\mathcal N_1\)
with zero vertical mean.  We estimate \(\langle w\rangle_z\)
separately by its compatibility identity and evaluate all energies
at order four until the higher-order argument below.  Define
\begin{equation}\label{eq:4.39}
 J(T):=\int_0^T\left(\|w^0(t)\|_{\mathcal W^2}
   +\|\langle w\rangle_z(t)\|_{\mathcal W_x^2}
   +\|\partial_z\Pi_0^xv(t)\|_{\mathcal W_z^1}\right)\,\mathrm dt.
\end{equation}

For sufficiently small fixed \(\delta_{\mathrm I}\in(0,1]\),
we claim that \(\sup_{t\le T}e(t)\le\delta_{\mathrm I}\) implies
\begin{equation}\label{eq:4.40}
 J(T)\le C\left(\varepsilon+E_*(T)
              +\int_0^T\|\mathcal M\Theta(t)\|_{\mathcal W^2}\,\mathrm dt\right).
\end{equation}
We also prove
\begin{equation}\label{eq:4.41}
 \int_0^T\mathcal C_v(t)\,\mathrm dt\le CJ(T).
\end{equation}
The constants are independent of \(T\) and higher solution norms.
The local regularity \(w^0\in L_t^2H^4\),
\(\langle w\rangle_z\in C_tH_x^3\), and \(v\in C_tH^4\),
together with \eqref{eq:2.1.7}--\eqref{eq:2.1.8}, ensures
\(J(T)<\infty\).

For \(w^0\), apply \eqref{eq:3.2.7} with these sources and
integrate the kernels before summing over frequencies.
On \(\Sigma_2^x\),
\begin{equation*}
\begin{aligned}
 \langle k,m\rangle^2\int_0^\infty
       \left|\frac{k}{m}e^{\lambda_-u}\right|\,\mathrm du
 \le{}&C\frac{|k|}{|m|}\le C\langle k,m\rangle,\\
 \langle k,m\rangle^2\int_0^\infty
       \left|\frac{k}{m}h_+e^{\lambda_-u}\right|\,\mathrm du
 \le{}&C\frac{\langle k,m\rangle^2\rho}{a}\le C.
\end{aligned}
\end{equation*}
The initial fast contribution is bounded by
\(C(\|v_0\|_{\mathcal W^1}+\|\Theta_0\|_{\mathcal W^0})\),
and \eqref{eq:3.1.9} controls the finite region \(\Sigma_1^x\).
Keeping the slow contribution in \eqref{eq:3.2.8} as
\(\mathcal M\Theta(t)\), we obtain by Tonelli's theorem
\begin{equation}\label{eq:4.42}
\int_0^T\|w^0\|_{\mathcal W^2}\,\mathrm dt \le{}C\varepsilon+C\int_0^T\|\mathcal M\Theta\|_{\mathcal W^2}\,\mathrm dt +C\int_0^T\bigl(\|\mathcal N_1\|_{\mathcal W^1} +\|\mathcal N_2\|_{\mathcal W^0}\bigr)\,\mathrm dt.
\end{equation}
The compatible vertical mean satisfies
\begin{equation}\label{eq:4.43}
 \|\langle w\rangle_z\|_{\mathcal W_x^2}
 \le Ce\bigl(\|w^0\|_{\mathcal W^2}+\|v\|_{\mathcal W^0}\bigr)
          +CVA_4.
\end{equation}
Since \(\langle w\rangle=0\), the output horizontal weight may
be taken as \(|K|^2\).  Write \(K=k+\ell\) and use
\(|K|^2\le2k^2+2\ell^2\) in each product in
\eqref{eq:2.2.4}.
In \(\langle w^0\theta_z\rangle_z\), the part with weight \(k^2\) is bounded by
\[
C\|w^0\|_{\mathcal W^2}\|\theta\|_{\mathcal W^1} \le Ce\|w^0\|_{\mathcal W^2}.
\]
For the part with weight \(\ell^2\), use
\(|m|\,|\widehat{w^0}(k,m)|=|k|\,|\widehat v(k,m)|\).
It is bounded by
\[
 C\sum_{m\ne0}\left(\sum_k|k|\,|\widehat v(k,m)|\right)
                  \left(\sum_{\ell\ne0}\ell^2
                         |\widehat\Theta(\ell,-m)|\right)
 \le C\|v\|_{H^2}\|\partial_x^3\Theta\|_2\le CVA_4.
\]
Here Cauchy--Schwarz is applied first in the horizontal frequencies
and then in \(m\).  The last bound uses \eqref{eq:4.7}.
In \(\langle v\Theta_x\rangle_z\), the part with weight
\(k^2\) is bounded by
\[
C\|v\|_{\mathcal W^2}\|\Theta_x\|_{\mathcal W^0}\le CVA_4.
\]
For the other part,
\[
 \sup_m\sum_{\ell\ne0}|\ell|^3|\widehat\Theta(\ell,m)|
 \le C\sup_m\left(\sum_\ell\ell^8
                       |\widehat\Theta(\ell,m)|^2\right)^{\frac12}
 \le C\|\theta\|_{H^4}\le Ce.
\]
Multiplication by \(\|v\|_{\mathcal W^0}\) completes
\eqref{eq:4.43}.

The diagnostic relation and the lower bounds on nonzero lattice
frequencies imply
\begin{equation}\label{eq:4.44}
 \|v\|_{\mathcal W^0}
 \le C\bigl(\|w^0\|_{\mathcal W^1}
               +\|\partial_z\Pi_0^xv\|_{\mathcal W_z^0}\bigr).
\end{equation}
Since \(e\le1\), we use the diagnostic bound
\[
\|w\|_{\mathcal W^1}+\|v\|_{\mathcal W^2}\le CV
\]
and the product estimate to obtain
\begin{equation*}
 \|\mathcal N_1\|_{\mathcal W^1}\le CV^2.
\end{equation*}
Integration of the zero-mean heat kernels in \eqref{eq:3.2.11}
then yields
\begin{equation}\label{eq:4.45}
 \int_0^T\|\partial_z\Pi_0^xv\|_{\mathcal W_z^1}\,\mathrm dt
 \le C\varepsilon+C\int_0^T\|\Pi_0^x\mathcal N_1\|_{\mathcal W_z^0}\,\mathrm dt
 \le C\bigl(\varepsilon+E_*(T)\bigr).
\end{equation}
Indeed, for \(m\ne0\),
\(\langle m\rangle|m|\int_0^\infty e^{-\mu_zm^2u}\,\mathrm du
\le C\).
Also \(\|\theta\|_{\mathcal W^1}\le Ce\), so
\eqref{eq:4.44} implies
\begin{equation*}
 \|\mathcal N_2\|_{\mathcal W^0}
 \le Ce\bigl(\|w^0\|_{\mathcal W^2}
        +\|\langle w\rangle_z\|_{\mathcal W_x^2}
        +\|\partial_z\Pi_0^xv\|_{\mathcal W_z^0}\bigr).
\end{equation*}
Finally, it follows from the Cauchy--Schwarz inequality and
\eqref{eq:4.4} that
\[
 \int_0^T VA_4\,\mathrm dt
 \le\left(\int_0^T V^2\,\mathrm dt\right)^{\frac12}
    \left(\int_0^T A_4^2\,\mathrm dt\right)^{\frac12}
 \le CE_*(T).
\]
Adding \eqref{eq:4.42}, the time integral of \eqref{eq:4.43},
and \eqref{eq:4.45}, we obtain
\[
 J(T)\le C\left(\varepsilon+E_*(T)
                +\int_0^T\|\mathcal M\Theta\|_{\mathcal W^2}\,\mathrm dt\right)
            +C\delta_{\mathrm I}J(T).
\]
Absorbing the last term for small \(\delta_{\mathrm I}\) proves
\eqref{eq:4.40}.  To prove \eqref{eq:4.41}, use
\(\widehat v=-\frac m k\widehat{w^0}\) on \(\mathcal K_*^x\):
\[
 \|\nabla\Pi_{\ne0}^xv\|_\infty
 \le\sum_{\mathcal K_*^x}
       \left(|m|+\frac{m^2}{|k|}\right)|\widehat{w^0}(k,m)|
 \le C\|w^0\|_{\mathcal W^2}.
\]
The remaining terms in \(\mathcal C_v\) are controlled by the
integrand of \(J\), proving \eqref{eq:4.41}.

We relate the diagnostic time integral to the spectral energy by
two interpolation estimates.  For \(\mathcal Qf=f\), we have
\begin{equation}\label{eq:4.46}
 \|\partial_xf\|_2
 \le C\|\mathcal Mf\|_2^{\frac12}\|f\|_{H^4}^{\frac12},
\end{equation}
and
\begin{equation}\label{eq:4.47}
 \|\mathcal Mf\|_{\mathcal W^2}
 \le C\|\mathcal Mf\|_2^{\frac7{20}}
       \|\mathcal M^{\frac12}f\|_{H^4}^{\frac{13}{20}}.
\end{equation}
Estimate~\eqref{eq:4.46} follows by Fourier interpolation from \(k^2\le C\rho\langle k,m\rangle^4\).
To prove \eqref{eq:4.47}, set \(b=\frac{13}{20}\) and split the sum \(\sum_{\mathcal K_*^x}\rho^b\langle k,m\rangle^{4-8b}\)
into the regions \(|k|\ge|m|\) and \(|m|>|k|\).
The first part converges since \(\rho\le Cm^{-2}\) and \(b>\frac58\), and the second since \(\rho\le Ck^2m^{-4}\) and \(b>\frac35\).
Applying Cauchy--Schwarz, followed by Fourier interpolation, we obtain
\[
 \|\mathcal Mf\|_{\mathcal W^2}
 \le C\|\mathcal M^{1-\frac b2}f\|_{H^{4b}}
 \le C\|\mathcal Mf\|_2^{1-b}
       \|\mathcal M^{\frac12}f\|_{H^4}^{b}.
\]
If, in addition, \(\|\mathcal Mf(t)\|_2\le A\varepsilon(1+t)^{-2}\)
on \([0,T]\), then \eqref{eq:4.47} implies
\begin{equation}\label{eq:4.48}
 \int_0^T\|\mathcal Mf(t)\|_{\mathcal W^2}\,\mathrm dt
 \le C(A\varepsilon)^{\frac7{20}}
 \left(\int_0^T\|\mathcal M^{\frac12}f(t)\|_{H^4}^2
          \,\mathrm dt\right)^{\frac{13}{40}}.
\end{equation}
Indeed, by H\"older's inequality,
\[
 \int_0^T(1+t)^{-\frac7{10}}
       \|\mathcal M^{\frac12}f(t)\|_{H^4}^{\frac{13}{20}}\,\mathrm dt
 \le\left(\int_0^T
       \|\mathcal M^{\frac12}f(t)\|_{H^4}^2\,\mathrm dt\right)^{\frac{13}{40}}
    \left(\int_0^\infty
       (1+t)^{-\frac{28}{27}}\,\mathrm dt\right)^{\frac{27}{40}}.
\]
Since \(\frac{28}{27}>1\), the constant is independent of \(T\).

Under the same smallness condition, if
\(E_*(T)\le K\varepsilon^2\) and
\(\|\mathcal M\Theta(t)\|_2\le A\varepsilon(1+t)^{-2}\),
\eqref{eq:4.48} and \eqref{eq:4.4} give
\begin{equation}\label{eq:4.49}
 \int_0^T\|\mathcal M\Theta(t)\|_{\mathcal W^2}\,\mathrm dt
 \le C A^{\frac7{20}}K^{\frac{13}{40}}\varepsilon.
\end{equation}
We now close the joint bootstrap on the local lifespan.
Fix the energy parameters in Propositions~\ref{prop:4.1} and
\ref{prop:4.2}, and choose
\(0<\delta\le\min\{\frac{\delta_2}{2},
\delta_{\mathrm M},\delta_{\mathrm I}\}\)
small enough that \(e\le\delta\) implies
\(\|\theta_z\|_\infty\le\frac{\gamma}{4}\).
Set \(Y=E_{5,0}\).
For constants \(K,A,B\ge1\) to be chosen below, we impose the
bootstrap assumptions
\begin{equation}\label{eq:4.50}
\left\{
\begin{aligned}
 &E_*(T)\le K\varepsilon^2,\qquad J(T)\le B\varepsilon,\\
 &\sup_{0\le t\le T}(1+t)^2Y(t)^{\frac12}\le A\varepsilon,\\
 &\sup_{0\le t\le T}(1+t)^2\|w^0(t)\|_2\le A\varepsilon,\\
 &\sup_{0\le t\le T}(1+t)^{\frac32}
       \|v(t)\|_{\mathcal W^0}\le A\varepsilon.
\end{aligned}
\right.
\end{equation}
The quantities in \eqref{eq:4.50} are continuous in \(T\).
At \(T=0\), the bounds hold with \(K=A=C_{\mathrm{in}}\) for a
fixed \(C_{\mathrm{in}}\ge1\), and \(J(0)=0\).  Thus large fixed
\(K,A,B\) and \(\sqrt K\varepsilon\le\delta\) start the strict
bootstrap on a short interval.

By Proposition~\ref{prop:4.1}, \eqref{eq:4.41}, and Gronwall's
inequality,
\begin{equation*}
 E_*(T)+\int_0^T A_4(t)^2\,\mathrm dt
 \le C_E\varepsilon^2\exp\bigl(C_EB\varepsilon\bigr).
\end{equation*}
Choose \(K\ge\max\{4C_E,2C_{\mathrm{in}},1\}\).
If \(C_EB\varepsilon\le\log2\), then the first bound in
\eqref{eq:4.50} improves to
\(E_*(T)\le\frac K2\varepsilon^2\).
Below, \(C_K,c_K>0\) may depend on this fixed \(K\), but not on
\(A,B,T\) or \(\varepsilon\).

For the spectral bound, note that
\(\partial_xv=\partial_x\mathcal Qv\).
It follows from \eqref{eq:4.46}, \eqref{eq:4.34}, and
\eqref{eq:4.50} that
\[
 \|\partial_xv(t)\|_2+\|\partial_x\Theta(t)\|_2
 \le C_K A^{\frac12}\varepsilon(1+t)^{-1}.
\]
Combining this estimate with the \(\mathcal W^0\)-bound on \(v\)
in \eqref{eq:4.50}, we obtain
\begin{equation}\label{eq:4.51}
 \|v\partial_xv\|_2+\|v\partial_x\Theta\|_2
 \le C_K A^{\frac32}\varepsilon^2(1+t)^{-\frac52}.
\end{equation}
On \(\mathcal K_*^x\), \eqref{eq:2.3.1} yields
\(\rho^{-2}\le C\langle k,m\rangle^8\).
H\"older's inequality in the Fourier sum gives
\begin{equation}\label{eq:4.52}
\|\mathcal M\Theta\|_2^2 \le{}\left(\|\mathcal M^{\frac32}\Theta\|_2^2\right)^{\frac45}
\left(L_xL_z\sum_{\mathcal K_*^x} \rho^{-2}|\widehat\Theta|^2\right)^{\frac15}
\le{}C_K\varepsilon^{\frac25} \left(\|\mathcal M^{\frac32}\Theta\|_2^2\right)^{\frac45}.
\end{equation}
For the velocity, we combine Poincar\'e's inequality with the
boundedness of \(\mathcal M\) and the bootstrap bound to obtain
\[
 \|\mathcal Mv\|_2^2
 \le C\|\mathcal Mv\|_2^{\frac25}
       \|\mathcal L^{\frac12}\mathcal Mv\|_2^{\frac85}
 \le C_K\varepsilon^{\frac25}
       \|\mathcal L^{\frac12}\mathcal Mv\|_2^{\frac85}.
\]
By H\"older's inequality, the preceding velocity estimate and
\eqref{eq:4.52}, together with the energy equivalence
\eqref{eq:4.34}, imply
\[
 \|\mathcal L^{\frac12}\mathcal Mv\|_2^2
 +\|\mathcal M^{\frac32}\Theta\|_2^2
 \ge c_K\varepsilon^{-\frac12}Y^{\frac54}.
\]
Substituting the preceding lower bound and \eqref{eq:4.51}
into \eqref{eq:4.35}, we obtain
\begin{equation*}
 Y'+c_K\varepsilon^{-\frac12}Y^{\frac54}
 \le C_KA^3\varepsilon^4(1+t)^{-5}.
\end{equation*}
Choose \(K_{\mathrm M}\ge1\), depending only on \(K\) and
fixed constants, so that
\[
 Y(0)\le K_{\mathrm M}\varepsilon^2,
 \qquad c_KK_{\mathrm M}^{\frac54}-4K_{\mathrm M}\ge1.
\]
Apply \eqref{eq:2.3.15}--\eqref{eq:2.3.16}
with \(\alpha=4\), \(\beta=5\), and \(F_0=C_KA^3\).
When \(C_KA^3\varepsilon^2\le1\), this gives
\begin{equation}\label{eq:4.53}
 Y(t)^{\frac12}\le\sqrt{K_{\mathrm M}}\varepsilon(1+t)^{-2}.
\end{equation}
This comparison does not use decay of \(\Theta\) in an unweighted norm.

For the velocity bounds, \eqref{eq:4.36}, \eqref{eq:4.38},
and \eqref{eq:4.51}, with \(s=4,r=0\), imply
\begin{equation}\label{eq:4.54}
 \|\langle w\rangle_z\|_{L_x^2}+\sum_{i=1}^2\|\mathcal N_i\|_2
 \le C_KA\varepsilon^2(1+t)^{-2}
            +C_KA^{\frac32}\varepsilon^2(1+t)^{-\frac52}.
\end{equation}
Using \eqref{eq:4.53}--\eqref{eq:4.54} in the response bounds
\eqref{eq:3.2.3} and \eqref{eq:3.2.5}, together with
\eqref{eq:2.3.14}, we obtain
\begin{equation*}
 \sup_{t\le T}(1+t)^2\|w^0(t)\|_2
 \le C_K(1+\sqrt{K_{\mathrm M}})\varepsilon
     +C_K(A+A^{\frac32})\varepsilon^2,
\end{equation*}
and
\begin{equation*}
 \sup_{t\le T}(1+t)^{\frac32}\|v(t)\|_{\mathcal W^0}
 \le C_K(1+K_{\mathrm M}^{\frac38})\varepsilon
     +C_K(A+A^{\frac32})\varepsilon^2.
\end{equation*}
Enlarge \(C_K\) to cover both inequalities, and choose
\[
 A\ge4\max\left\{C_{\mathrm{in}}+\sqrt{K_{\mathrm M}},
                  C_K(1+\sqrt{K_{\mathrm M}}),
                  C_K(1+K_{\mathrm M}^{\frac38})\right\}.
\]
If \(C_K(A+A^{\frac32})\varepsilon\le\frac A4\), these bounds and
\eqref{eq:4.53} improve all three pointwise assumptions in
\eqref{eq:4.50}, with \(A\) replaced by \(\frac A2\).

Finally, \eqref{eq:4.40} and \eqref{eq:4.49} imply
\[
 J(T)\le C\bigl(1+K\varepsilon
                  +A^{\frac7{20}}K^{\frac{13}{40}}\bigr)\varepsilon.
\]
Absorb the equivalence constant in \eqref{eq:4.34} into \(C\)
and choose \(B\ge4C(1+K+A^{\frac7{20}}K^{\frac{13}{40}})\).
For \(\varepsilon\le1\), the bootstrap bound on \(J(T)\) then
improves to \(J(T)\le\frac B2\varepsilon\).
With the energy parameters and \(K,K_{\mathrm M},A,B\) fixed
in this order, choose \(\varepsilon_4\in(0,1]\) such that
\begin{equation*}
 \sqrt K\,\varepsilon_4\le\delta,
 \qquad
 C_EB\varepsilon_4\le\log2,
\end{equation*}
and
\begin{equation*}
 C_KA^3\varepsilon_4^2\le1,
 \qquad
 C_K(A+A^{\frac32})\varepsilon_4\le\frac{A}{4}.
\end{equation*}
A first-exit argument extends these improvements to every
\(T<T_{\max}^{(4)}\).  In particular,
\begin{equation}\label{eq:4.55}
 \|\partial_z\theta(t)\|_\infty\le\frac\gamma4,
 \qquad R(t)\ge\frac{3\gamma}{4},
 \qquad \|R(t)^{-1}\|_\infty\le\frac4{3\gamma}.
\end{equation}
The uniform \(H^4\) bound and \eqref{eq:4.55} exclude finite
lifespan by \eqref{eq:2.2.6}.  Hence \(T_{\max}^{(4)}=\infty\).
Taking \(\varepsilon_{\mathrm g}(4)=\varepsilon_4\) and
letting \(T\to\infty\) proves \eqref{eq:1.2.5}--\eqref{eq:1.2.6}
at order four, together with
\begin{equation}\label{eq:4.56}
\begin{aligned}
 &\|\mathcal Mv(t)\|_2+\|\mathcal M\Theta(t)\|_2+\|w^0(t)\|_2
 \le C\varepsilon(1+t)^{-2},\\
 &\|v(t)\|_{\mathcal W^0}
 \le C\varepsilon(1+t)^{-\frac32},\\
 &\|v\partial_xv\|_2+\|v\partial_x\Theta\|_2
 \le C\varepsilon^2(1+t)^{-\frac52}.
\end{aligned}
\end{equation}
These auxiliary bounds will be used in the endpoint decay proof.

Fix an integer \(s\ge5\) and return to
\(\varepsilon=\|v_0\|_{H^s}+\|\theta_0\|_{H^s}\).
For \(\varepsilon\le\varepsilon_4\), uniqueness in
Subsection~\ref{subsec:2.2} identifies the maximal \(H^s\) solution
with the global \(H^4\) solution on its lifespan.
It therefore inherits \eqref{eq:4.55} and
\(\int_0^T\mathcal C_v(t)\,\mathrm dt\le C_4\varepsilon\)
for every \(0<T<T_{\max}^{(s)}\).

At order \(s\), Proposition~\ref{prop:4.1} and Gronwall's inequality
yield, provided \(\sup_{t\le T}E_1(t)\le\frac{\delta_2(s)^2}{4}\),
\[
 E_*(T)+\int_0^T\|\mathcal M^{\frac12}\Theta\|_{H^s,\mathrm d}^2\,\mathrm dt
 \le C_s\varepsilon^2\exp\left(C_s\int_0^T\mathcal C_v\,\mathrm dt\right)
 \le C_s\varepsilon^2 e^{C_sC_4\varepsilon}.
\]
Choose \(K_s\) sufficiently large and then
\(\varepsilon_{\mathrm g}(s)\le\varepsilon_4\) sufficiently small
that \(K_s\varepsilon_{\mathrm g}(s)^2\le\frac{\delta_2(s)^2}{4}\)
and the preceding estimate improves
\(E_*(T)\le K_s\varepsilon^2\) to
\(E_*(T)\le\frac{K_s}{2}\varepsilon^2\).
Continuity, the stratification bound \eqref{eq:4.55}, and the
continuation criterion \eqref{eq:2.2.6} then imply
\(T_{\max}^{(s)}=\infty\).
Together with the fourth-order case, this yields, for every integer
\(s\ge4\),
\begin{equation}\label{eq:4.57}
 \sup_{t\ge0}E_1(t)+\int_0^\infty
 \bigl(\mathcal D_s(t)
       +\|\mathcal M^{\frac12}\Theta(t)\|_{H^s,\mathrm d}^2\bigr)
 \,\mathrm dt\le C_s\varepsilon^2.
\end{equation}
Together with the inherited velocity time-integral bound, this proves
\eqref{eq:1.2.4}--\eqref{eq:1.2.6}.  The local theory supplies
uniqueness and strong-solution regularity.

It remains to prove convergence and identify the limiting flux.
From \eqref{eq:2.2.3}, \eqref{eq:2.2.7}, \eqref{eq:2.3.6},
and \eqref{eq:4.5}, we have, for almost every \(t\ge0\),
\begin{equation*}
 \|w\|_{H^{s-1}}+\|\partial_t\Theta\|_{H^{s-1}}
 \le C\mathcal D_s^{\frac12},
\end{equation*}
and
\begin{equation*}
 \|\partial_tv\|_{H^{s-1}}
 \le C\left(\mathcal D_s^{\frac12}
       +\|\mathcal M^{\frac12}\Theta\|_{H^s,\mathrm d}\right).
\end{equation*}
These bounds, \eqref{eq:2.3.6}, and \eqref{eq:4.57} yield
\begin{equation}\label{eq:4.58}
 \int_0^\infty\bigl(\|v\|_{H^{s+1}}^2+\|w\|_{H^{s-1}}^2+\|\Theta\|_{H^{s-2}}^2
 +\|\mathcal T\Theta\|_{H^{s-1}}^2+\|\partial_t\Theta\|_{H^{s-1}}^2
 +\|\partial_tv\|_{H^{s-1}}^2\bigr)\,\mathrm dt\le C\varepsilon^2.
\end{equation}
Applying \eqref{eq:2.3.10} with \(f=\Theta\), \(n=s-2\),
and \(\ell=0\), we obtain
\begin{equation*}
 \|\Theta(t)\|_{H^{s-2}}^2
 \le C
 \left(\int_t^\infty\mathcal D_s(\tau)\,\mathrm d\tau\right)^{\frac12}
 \left(\int_t^\infty \|\mathcal M^{\frac12}\Theta(\tau)\|_{H^s,\mathrm d}^2\,\mathrm d\tau\right)^{\frac12}
 \longrightarrow0.
\end{equation*}
Since \(H^{s-2}(\Omega)\) is an algebra for \(s\ge4\),
\eqref{eq:2.2.10} implies
\[
 \|\langle w\Theta\rangle_x\|_{H_z^{s-2}}
 +\|\partial_t\langle\theta\rangle_x\|_{H_z^{s-3}}
 \le C\mathcal D_s^{\frac12}\|\mathcal M^{\frac12}\Theta\|_{H^s,\mathrm d}.
\]
Consequently,
\begin{equation*}
\begin{aligned}
 &\int_t^\infty
   \left(\|\langle w(\tau)\Theta(\tau)\rangle_x\|_{H_z^{s-2}}
        +\|\partial_\tau \langle\theta(\tau)\rangle_x\|_{H_z^{s-3}}\right)
       \,\mathrm d\tau\\
 \le{}&
 C\left(\int_t^\infty\mathcal D_s(\tau)\,\mathrm d\tau\right)^{\frac12}
  \left(\int_t^\infty \|\mathcal M^{\frac12}\Theta(\tau)\|_{H^s,\mathrm d}^2\,\mathrm d\tau\right)^{\frac12}.
\end{aligned}
\end{equation*}
The flux is therefore absolutely Bochner integrable in
\(H_z^{s-2}\).  Define
\begin{equation}\label{eq:4.59}
 S_\infty
:=\langle\theta_0\rangle_x-\partial_z\int_0^\infty
          \langle w(\tau)\Theta(\tau)\rangle_x\,\mathrm d\tau
 \quad\text{in }H_z^{s-3}.
\end{equation}
Integrating \eqref{eq:2.2.10}, we obtain the tail representation
\[
 \langle\theta(t)\rangle_x-S_\infty
 =\partial_z\int_t^\infty
   \langle w(\tau)\Theta(\tau)\rangle_x\,\mathrm d\tau,
\]
and, by the preceding flux estimate,
\begin{equation}\label{eq:4.60}
 \|\langle\theta(t)\rangle_x-S_\infty\|_{H_z^{s-3}}
 \le C
 \left(\int_t^\infty
       \mathcal D_s(\tau)\,\mathrm d\tau\right)^{\frac12}
 \left(\int_t^\infty
       \|\mathcal M^{\frac12}\Theta(\tau)\|_{H^s,\mathrm d}^2
       \,\mathrm d\tau\right)^{\frac12}.
\end{equation}
The right-hand side tends to zero as \(t\to\infty\).

Equation~\eqref{eq:4.59} proves \eqref{eq:1.2.9}, and
\eqref{eq:4.60} at \(t=0\) gives \eqref{eq:1.2.10}.
Periodicity implies \(\langle S_\infty\rangle_z=0\).
The uniform \(H_z^s\) bound and convergence in \eqref{eq:4.60}
imply, by weak lower semicontinuity, that \(S_\infty\in H_z^s\)
and
\begin{equation*}
 \|S_\infty\|_{H_z^s}\le C\varepsilon.
\end{equation*}
Since \(\theta-S_\infty=\Theta+\langle\theta\rangle_x-S_\infty\),
the difference tends to zero in \(H^{s-3}\) and stays bounded in
\(H^s\).  For \(0<\delta<3\), interpolation yields
\begin{equation*}
 \|\theta(t)-S_\infty\|_{H^{s-\delta}}
 \le C_\delta\|\theta(t)-S_\infty\|_{H^{s-3}}^{\frac\delta3}
   \bigl(\|\theta(t)\|_{H^s}
         +\|S_\infty\|_{H_z^s}\bigr)^{1-\frac\delta3}
 \longrightarrow0.
\end{equation*}
Sobolev inclusion covers \(3\le\delta\le s\).
In particular, the mean converges in \(H_z^2\hookrightarrow C^1\),
so averaging \eqref{eq:4.55} and passing to the limit yields
\begin{equation*}
 \gamma+\partial_zS_\infty(z)
   =\lim_{t\to\infty}\bigl(\gamma+\partial_z\langle\theta(t)\rangle_x(z)\bigr)
   \ge\frac\gamma2.
\end{equation*}
This proves \eqref{eq:1.2.7}.

Finally, apply \eqref{eq:2.3.10} to \(f=v\), \(n=s\), \(\ell=1\),
using \eqref{eq:4.58}.  Then
\begin{equation*}
 \|v(t)\|_{H^s}^2
 \le C\int_t^\infty
       \left(\mathcal D_s(\tau)+
       \|\mathcal M^{\frac12}\Theta(\tau)\|_{H^s,\mathrm d}^2\right)
             \,\mathrm d\tau\longrightarrow0.
\end{equation*}
The diagnostic formulas \eqref{eq:2.2.1} and \eqref{eq:2.2.2} imply
\[
 \|w(t)\|_{H^{s-1}}\le C(1+\|\theta(t)\|_{H^s})\|v(t)\|_{H^s}
       \longrightarrow0,
\]
and, with \(p_\infty=g\beta\partial_z^{-1}S_\infty\),
\[
 \|p(t)-p_\infty\|_{H^{s-\delta}}
 \le C_\delta\left(\|\theta(t)-S_\infty\|_{H^{s-\delta}}
                           +\|v(t)\|_{H^s}^2\right)\longrightarrow0
 \qquad(0<\delta\le s).
\]
The one-dimensional antiderivative bound ensures
\(p_\infty\in H_z^{s+1}\) with zero mean, proving
\eqref{eq:1.2.8} and \eqref{eq:1.2.11}.
Since \(\partial_zp_\infty=g\beta S_\infty\), the limit is a
compatible hydrostatic equilibrium, uniquely determined by the
strong convergence and pressure normalization.
\end{proof}

The fourth-order estimate \eqref{eq:1.2.6} cannot be extended to
a uniform bound in terms of the \(H^3\) norm of the initial data.
Proposition~\ref{prop:4.3} proves this already for \(w^0\), even
within the smooth data class generating global solutions.
Consequently, \(s=4\) is the smallest integer index for this
particular uniform estimate.  This does not exclude existence or
stability in \(H^3\) by a different estimate.

\begin{proposition}\label{prop:4.3}
There is no constant \(C\), depending only on the fixed parameters
and periods, such that every smooth compatible solution of the
linearized system \eqref{eq:3.1.1} satisfies
\begin{equation}\label{eq:4.61}
 \int_0^\infty\|w^{\mathrm{lin}}(t)\|_{W^{2,\infty}}\,\mathrm dt
 \le C\bigl(\|v_0\|_{H^3}+\|\theta_0\|_{H^3}\bigr),
 \qquad w^{\mathrm{lin}}=-\mathcal Tv^{\mathrm{lin}}.
\end{equation}
There are also no constants \(C,\delta>0\) for which the analogous
nonlinear bound with \(w^0\) holds uniformly for all smooth,
compatible, strictly stratified data of \(H^3\times H^3\) size
at most \(\delta\), even when restricted to data generating the
global solutions of Theorem~\ref{thm:1.1}.
\end{proposition}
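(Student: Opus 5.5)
The plan is to build a family of smooth linear solutions for which the left side of \eqref{eq:4.61} grows like \(\sqrt{\log M}\) while the \(H^3\) norm of the data stays bounded. The nonlinear statement will then follow from the linear one by a small-amplitude scaling and a contradiction argument on finite time intervals.

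\emph{Linear construction.} I take \(v_0=0\) and \(\theta_0=\sum_{(k,m)\in\Lambda_M}c_{k,m}\cos(kx+mz)\). Here \(\Lambda_M\) is the set of lattice points with \(k,m>0\), \(k\le m\le M\), lying in \(\Sigma_2^x\) (this excludes the finite set \(\Sigma_1^x\)). The coefficients are
\[
c_{k,m}=\langle k,m\rangle^{-4}(\log M)^{-\frac12}.
\]
These data are smooth, real, and have zero vertical mean. Summing over the two-dimensional sector \(k\le m\) gives
\[
\|\theta_0\|_{H^3}^2\simeq\sum_{\Lambda_M}\langle k,m\rangle^{6}c_{k,m}^2=\frac{1}{\log M}\sum_{\Lambda_M}\langle k,m\rangle^{-2}\le C .
\]
For comparison, I will use \(\sum_{\Lambda_M}m^2c_{k,m}\ge c\sqrt{\log M}\). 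By \eqref{eq:3.1.4} with \(\widehat v_0=0\), each mode gives \(\widehat w=g\beta\frac{k^2}{m^2}K_1\widehat\theta_0\). On \(\Sigma_2\) the roots are real and negative, so \(K_1\ge0\). Hence all modes of \(\partial_{zz}w^{\mathrm{lin}}(t,0,0)\) carry the same sign, with no cancellation, and
\[
\|w^{\mathrm{lin}}(t)\|_{W^{2,\infty}}\ge|\partial_{zz}w^{\mathrm{lin}}(t,0,0)|=g\beta\sum_{\Lambda_M}k^2K_1(t)c_{k,m}.
\]
A direct computation from \eqref{eq:3.1.3} gives \(\int_0^\infty K_1\,\mathrm dt=(\lambda_+\lambda_-)^{-1}=m^2/(N^2k^2)\). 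By Tonelli, the time integral is therefore at least \(\gamma^{-1}\sum_{\Lambda_M}m^2c_{k,m}\ge c\sqrt{\log M}\to\infty\). This contradicts \eqref{eq:4.61}.

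\emph{Nonlinear statement.} Suppose \(C,\delta\) exist. Fix \(M\) and let \(\theta_M\) be the datum above, which lies in every \(H^\sigma\). Take data \((0,\eta\theta_M)\) with \(\eta\to0\). For small \(\eta\) these data satisfy the hypotheses of Theorem~\ref{thm:1.1}, so they generate global solutions \(U_\eta\) with \(\|U_\eta\|_{C_tH^\sigma}\le C_\sigma\eta\) for a fixed large \(\sigma\) (say \(\sigma\ge6\)). The difference \(U_\eta-\eta U^{\mathrm{lin}}\) solves the forced linear system \eqref{eq:2.3.11} with sources \(\Pi^z_{\ne0}\mathcal N_i(U_\eta)\). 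These sources are \(O(\eta^2)\) in \(C_tH^{\sigma-1}\) by \eqref{eq:4.5}, or by the algebra property at order \(\sigma-1\) combined with the uniform \(H^\sigma\) bound. Then \eqref{eq:3.1.6} at order \(\sigma-1\) gives \(\sup_{t\le T}\|U_\eta-\eta U^{\mathrm{lin}}\|_{H^{\sigma-1}}\le C\eta^2T\). Consequently \(\eta^{-1}w^0_\eta\to w^{\mathrm{lin}}\) in \(C([0,T];H^{\sigma-2})\hookrightarrow C([0,T];W^{2,\infty})\). Dividing the assumed nonlinear bound by \(\eta\) and letting \(\eta\to0\) yields \(\int_0^T\|w^{\mathrm{lin}}\|_{W^{2,\infty}}\,\mathrm dt\le C\|\theta_M\|_{H^3}\le C'\) for every \(T\), uniformly in \(M\). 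Letting \(T\to\infty\) and then \(M\to\infty\) contradicts the linear lower bound.

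\emph{Main obstacle.} The main point is the sign and no-cancellation argument. Positivity of \(K_1\) must hold on every retained mode, which is why the finite complex-root set \(\Sigma_1^x\) is removed. The exact identity \(\int_0^\infty K_1\,\mathrm dt=(\lambda_+\lambda_-)^{-1}\) must be used rather than the bounds of Lemma~\ref{lem:3.2}. The logarithmic divergence comes precisely from the two-dimensional sum \(\sum\langle\xi\rangle^{-2}\), so restricting to the sector \(k\le m\), where \(m^2\simeq\langle\xi\rangle^2\), must not destroy it. Since the sector occupies a fixed angular fraction of the lattice, it does not.
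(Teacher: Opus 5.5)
Your proposal is correct and follows essentially the same route as the paper. For the linear claim, both use explicit Fourier data whose modes have the same sign at the origin, so the time-integrated \(W^{2,\infty}\) norm grows like the square root of a logarithm while the \(H^3\) norm stays bounded. For the nonlinear claim, both use small-amplitude rescaling, Duhamel comparison via \eqref{eq:3.1.6}, and a limit on finite intervals.

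The only difference is the choice of data. The paper places the data in the slow eigenspace (\(v_0=h_+\theta_0\) on dyadic blocks \(\Gamma_j\)), so each mode of \(w^{\mathrm{lin}}\) is a single exponential. Your choice \(v_0=0\) instead relies on \(K_1\ge0\) on \(\Sigma_2^x\) and on \(\int_0^\infty K_1\,\mathrm dt=(\lambda_+\lambda_-)^{-1}\), and both facts hold.
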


\begin{proof}
Fix a sufficiently large integer \(j_0\), and define the frequency
blocks
\[
 \Gamma_j:=\left\{\left(\frac{2\pi n}{L_x},\frac{2\pi\ell}{L_z}\right):
               2^j\le n<2^{j+1},\quad 2^j\le\ell<2^{j+1}\right\}.
\]
Each \(\Gamma_j\) contains \(2^{2j}\) frequencies, with
\(a(k,m)\) comparable to \(2^{2j}\) and \(\rho(k,m)\) comparable
to \(2^{-2j}\).  Thus, after fixing \(j_0\) large enough,
\(\Gamma_j\subset\Sigma_2^x\), the roots are real and negative,
and \(|h_+(k,m)|\le C2^{-2j}\) for \(j\ge j_0\).  For a positive integer \(J\),
define
\begin{equation}\label{eq:4.62}
\begin{aligned}
 \theta_0^{(J)}(x,z):={}&\frac1{\sqrt J}
        \sum_{j=j_0}^{j_0+J-1}2^{-4j}
           \sum_{(k,m)\in\Gamma_j}\cos(kx+mz),\\
 v_0^{(J)}(x,z):={}&\frac1{\sqrt J}
        \sum_{j=j_0}^{j_0+J-1}2^{-4j}
           \sum_{(k,m)\in\Gamma_j}h_+(k,m)\cos(kx+mz).
\end{aligned}
\end{equation}
These data are real trigonometric polynomials with zero horizontal
and vertical means.  Orthogonality and the disjoint frequency
blocks give constants independent of \(J\) such that
\begin{equation}\label{eq:4.63}
 c\le\|\theta_0^{(J)}\|_{H^3}\le C,
 \qquad \|v_0^{(J)}\|_{H^3}\le C.
\end{equation}
Indeed, the squared temperature norm of each block is comparable
to \(J^{-1}2^{-8j}2^{6j}2^{2j}=J^{-1}\).
Every mode lies in the slow eigenspace, so
\[
 w^{\mathrm{lin},(J)}(t,x,z)
 =\frac1{\sqrt J}\sum_{j=j_0}^{j_0+J-1}2^{-4j}
       \sum_{(k,m)\in\Gamma_j}
       \frac{-\lambda_+(k,m)}\gamma e^{\lambda_+(k,m)t}
                                  \cos(kx+mz).
\]
At \((x,z)=(0,0)\), all terms in
\(\partial_{xx}w^{\mathrm{lin},(J)}\) have the same sign.
Since \(\|w^{\mathrm{lin},(J)}(t)\|_{W^{2,\infty}}
\ge|\partial_{xx}w^{\mathrm{lin},(J)}(t,0,0)|\), time integration gives
\begin{equation}\label{eq:4.64}
 \int_0^\infty\|w^{\mathrm{lin},(J)}(t)\|_{W^{2,\infty}}\,\mathrm dt
 \ge\frac1{\gamma\sqrt J}\sum_{j=j_0}^{j_0+J-1}2^{-4j}
                         \sum_{(k,m)\in\Gamma_j}k^2
 \ge c\sqrt J.
\end{equation}
This contradicts \eqref{eq:4.61} and
\eqref{eq:4.63} as \(J\to\infty\).

For the nonlinear assertion, suppose that such constants
\(C,\delta>0\) exist.  Fix \(J\) and a finite \(T>0\).
Rescale the data in \eqref{eq:4.62} by a factor \(\eta\in(0,1]\), chosen so that
\[
\begin{aligned}
 \eta\bigl(\|v_0^{(J)}\|_{H^6}+\|\theta_0^{(J)}\|_{H^6}\bigr)
 \le{}&\varepsilon_{\mathrm g}(6),\\
 \eta\bigl(\|v_0^{(J)}\|_{H^3}+\|\theta_0^{(J)}\|_{H^3}\bigr)
 \le{}&\delta,
\end{aligned}
\]
and \(\eta\|\partial_z\theta_0^{(J)}\|_\infty
\le\frac{\gamma}{2}\).
For fixed \(J\), these conditions hold for all sufficiently small
\(\eta>0\).  Theorem~\ref{thm:1.1} at \(s=6\) gives a global
solution \((v^\eta,\theta^\eta)\).  Since the initial data are
trigonometric polynomials, higher-order persistence in
Subsection~\ref{subsec:2.2} makes this solution spatially smooth
on every finite interval.  Moreover,
\[
 \sup_{t\ge0}
       \bigl(\|v^\eta(t)\|_{H^6}+\|\theta^\eta(t)\|_{H^6}\bigr)
 \le C_J\eta.
\]
The constant \(C_J\) may depend on the fixed polynomial data but
is independent of sufficiently small \(\eta\).  The diagnostic
formulas and the algebra property of \(H^5\) imply, for the same
range of amplitudes,
\[
 \sup_{t\ge0}\sum_{i=1}^2
       \|\mathcal N_i(v^\eta,\theta^\eta)(t)\|_{H^5}
 \le C_J\eta^2.
\]
Apply \eqref{eq:3.1.4} with \(F_i=\Pi_{\ne0}^z\mathcal N_i\),
divide by \(\eta\), and subtract the linear solution with the
unscaled data.  The resulting difference has zero initial data.
Equation~\eqref{eq:3.1.6} at order five and boundedness of the
vertical projection yield
\[
\begin{aligned}
 &\sup_{0\le t\le T}
 \left(\left\|\frac{v^\eta(t)}\eta-v^{\mathrm{lin},(J)}(t)\right\|_{H^5}
      +\left\|\frac{\theta^\eta(t)}\eta-\theta^{\mathrm{lin},(J)}(t)\right\|_{H^5}\right)\\
 \le{}&\frac C\eta\int_0^T\sum_{i=1}^2
       \|\mathcal N_i(v^\eta,\theta^\eta)(t)\|_{H^5}\,\mathrm dt
 \le C_JT\eta\longrightarrow0.
\end{aligned}
\]
Only fixed \(J\) and finite \(T\) are used in this limit.
In particular, no differentiability of a solution map in
\(H^3\times H^3\) is assumed.
The diagnostic multiplier loses at most one derivative.  Hence
\begin{equation}\label{eq:4.65}
 \frac{w^{0,\eta}}\eta\longrightarrow w^{\mathrm{lin},(J)}
 \quad\text{in }C([0,T];H^4)
                    \hookrightarrow C([0,T];W^{2,\infty}).
\end{equation}
Apply the assumed nonlinear estimate on \([0,T]\), divide
by \(\eta\), and let \(\eta\to0\) in
\eqref{eq:4.65}.  We obtain
\[
 \int_0^T\|w^{\mathrm{lin},(J)}(t)\|_{W^{2,\infty}}\,\mathrm dt
 \le C\bigl(\|v_0^{(J)}\|_{H^3}+\|\theta_0^{(J)}\|_{H^3}\bigr).
\]
Monotone convergence as \(T\to\infty\), followed by
\eqref{eq:4.63}--\eqref{eq:4.64}, gives a contradiction for
sufficiently large \(J\).  Higher norms restrict the admissible
amplitude for each fixed \(J\), but do not enter the assumed
uniform \(H^3\) constant.
\end{proof}

%============================================================================================================================================================
% End of Section 4
%============================================================================================================================================================

%=====================================================================
% Section 5. Proof of Theorem 1.2
%=====================================================================
\section{Terminal profile and quadratic expansion}\label{sec:5}
We establish the periodic inverse-map identities used to identify
the terminal profile and derive its quadratic expansion in
Theorem~\ref{thm:1.2}.
Proposition~\ref{prop:5.1} gives a variational characterization
of the terminal equilibrium.

For a real \(\vartheta\in C^1(\Omega)\) with
\(\langle\vartheta\rangle_z=0\) and \(\gamma+\partial_z\vartheta>0\),
let \(\Phi_\vartheta(x,z)=z+\gamma^{-1}\vartheta(x,z)\) and let
\(\Psi_\vartheta(x,\eta)\) be its inverse in the vertical variable.
Both are increasing \(C^1\) lifts, with
\begin{equation}\label{eq:5.1}
 \Phi_\vartheta(x,z+L_z)=\Phi_\vartheta(x,z)+L_z,
 \quad \Psi_\vartheta(x,\eta+L_z)=\Psi_\vartheta(x,\eta)+L_z,
 \quad \partial_\eta\Psi_\vartheta
 =\frac{\gamma}{(\gamma+\vartheta_z)(x,\Psi_\vartheta)}.
\end{equation}
Their displacements are periodic in both variables.  All norms in
\(\eta\) below are taken over one period, and \(L_{x,\eta}^2\) denotes
\(L^2((0,L_x)\times(0,L_z))\).  For any
\(C^1\) lift \(G\) with \(G'>0\) and \(G(z+L_z)=G(z)+L_z\),
\begin{equation}\label{eq:5.2}
 \int_0^{L_z}(G^{-1}(\eta)-\eta)\,\mathrm d\eta
 =-\int_0^{L_z}(G(z)-z)\,\mathrm dz.
\end{equation}

The inverse displacement is periodic, so its integral over
\([0,L_z]\) equals its integral over \([G(0),G(0)+L_z]\).
The substitution \(\eta=G(z)\) then gives
\[
 \int_0^{L_z}(G^{-1}(\eta)-\eta)\,\mathrm d\eta
 =-\int_0^{L_z}(G(z)-z)G'(z)\,\mathrm dz
 =-\int_0^{L_z}(G(z)-z)\,\mathrm dz,
\]
since \(\int_0^{L_z}(G-z)(G'-1)\,\mathrm dz=0\).  In particular,
\begin{equation}\label{eq:5.3}
 \int_0^{L_z}(\Psi_\vartheta(x,\eta)-\eta)\,\mathrm d\eta
 =-\frac1\gamma\int_0^{L_z}\vartheta(x,z)\,\mathrm dz=0.
\end{equation}
Set \(A_\vartheta(\eta)=\langle\Psi_\vartheta(\cdot,\eta)\rangle_x\).
For every continuous \(L_z\)-periodic function \(f\), change of
variables gives the distribution identity
\begin{equation}\label{eq:5.4}
 \langle f(\Phi_\vartheta)\rangle
 =\frac1{L_z}\int_0^{L_z}f(\eta)A_\vartheta'(\eta)\,\mathrm d\eta.
\end{equation}
Thus \(A_\vartheta\) is an increasing degree-one diffeomorphism
with zero-mean displacement.  Its inverse \(F=A_\vartheta^{-1}\)
is the unique increasing \(C^1\) lift satisfying
\begin{equation}\label{eq:5.5}
 F(z+L_z)=F(z)+L_z,\quad \int_0^{L_z}(F(z)-z)\,\mathrm dz=0,
 \quad
 \frac1{L_z}\int_0^{L_z}f(F(z))\,\mathrm dz
       =\langle f(\Phi_\vartheta)\rangle
\end{equation}
for every continuous \(L_z\)-periodic \(f\).  Existence follows from
\eqref{eq:5.2} and \eqref{eq:5.4}.  Let \(\widetilde F\) be another
such lift.  The measure in \eqref{eq:5.4} has a continuous positive
density, so \(\widetilde F\) has no constant interval.  Equality of
the measures of vertical arcs gives, for \(\eta_1<\eta_2<\eta_1+L_z\),
\[
 \widetilde F^{-1}(\eta_2)-\widetilde F^{-1}(\eta_1)
 =A_\vartheta(\eta_2)-A_\vartheta(\eta_1).
\]
Thus \(\widetilde F^{-1}-A_\vartheta\) is constant, and
\(\widetilde F^{-1}\) is \(C^1\) with positive derivative.
By \eqref{eq:5.2}, the constant is zero, proving uniqueness.
Likewise, two compatible labels with the same periodic distribution
have averaged inverses with the same derivative and the same
zero-mean displacement, so their averaged inverses coincide.
This fixes the translation in the periodic monotone rearrangement.
For the related bounded-channel setting, see
\cite[Section~2.4]{DalibardGuillodLeblond2025} and
\cite[Section~1.3]{Park2024}.

We also need the change-of-variables identity
\begin{equation}\label{eq:5.6}
 \frac1{\gamma^2}\int_0^{L_z}|\vartheta(x,z)|^2\,\mathrm dz
 =\int_0^{L_z}|\Psi_\vartheta(x,\eta)-\eta|^2\,\mathrm d\eta.
\end{equation}
Changing variables with \(z=\Psi_\vartheta(x,\eta)\) inserts the
Jacobian \(\partial_\eta\Psi_\vartheta\).  It can be removed because
\[
 \int_0^{L_z}|\Psi_\vartheta-\eta|^2
       (\partial_\eta\Psi_\vartheta-1)\,\mathrm d\eta
 =\frac13[(\Psi_\vartheta-\eta)^3]_0^{L_z}=0.
\]
For two \(C^1\) degree-one lifts \(G_i(z)=z+g_i(z)\) with
\(0<a\le G_i'\le b\), one further has
\begin{equation}\label{eq:5.7}
 \frac{a}{b^2}\|g_1-g_2\|_{L_z^2}^2
 \le\|G_1^{-1}-G_2^{-1}\|_{L_\eta^2}^2
 \le\frac{b}{a^2}\|g_1-g_2\|_{L_z^2}^2.
\end{equation}
Indeed, the mean value theorem gives
\[
 \frac{|g_1(z)-g_2(z)|}{b}
 \le |G_2^{-1}(G_1(z))-z|
 \le \frac{|g_1(z)-g_2(z)|}{a}.
\]
Integration with \(\mathrm d\eta=G_1'(z)\,\mathrm dz\) proves
\eqref{eq:5.7}.  The same constants apply after integration in \(x\).

The next lemma gives the inverse-map expansion used twice in the proof of Theorem~\ref{thm:1.2}.

\begin{lemma}\label{lem:5.1}
Let \(n\ge3\) be an integer.  There exist \(\delta,C>0\), depending
only on \(n\) and \(L_z\), such that, for every real-valued
\(f\in H_z^n\) with \(\|f\|_{H_z^n}\le\delta\), the lift
\(\mathrm{id}+f\) is an increasing \(C^1\) diffeomorphism of
\(\mathbb R\).  Its inverse is \(\mathrm{id}+h\), where
\(h\in H_z^n\) is \(L_z\)-periodic and satisfies
\begin{equation}\label{eq:5.8}
 \|h\|_{H_z^n}\le C\|f\|_{H_z^n},\qquad
 \|h'\|_{L_z^\infty}\le\frac13.
\end{equation}
Moreover,
\begin{equation}\label{eq:5.9}
 \|h+f\|_{H_z^{n-1}}\le C\|f\|_{H_z^n}^2,\qquad
 \|h+f-ff'\|_{H_z^{n-2}}\le C\|f\|_{H_z^n}^3.
\end{equation}
\end{lemma}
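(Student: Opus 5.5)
The approach is to read the inverse relation as a fixed-point equation and then bootstrap Sobolev regularity through the one-dimensional algebra property of \(H_z^{n}\), \(n\ge3\). Since \(H_z^n\hookrightarrow C^2(\mathbb T_z)\) for \(n\ge3\), we may choose \(\delta\) so that \(\|f'\|_{L^\infty}\le C\|f\|_{H_z^n}\le\frac14\). Then \(\mathrm{id}+f\) has derivative in \([\frac34,\frac54]\), commutes with translation by \(L_z\), and is an increasing \(C^1\) (indeed \(C^2\)) diffeomorphism of \(\mathbb R\). Its inverse has the form \(\mathrm{id}+h\) with \(h\) periodic, and \(h\) is characterized by
\[
 h(\eta)=-f(\eta+h(\eta)).
\]
Differentiating gives \(h'=-f'(\mathrm{id}+h)\,(1+h')\), hence
\[
 h'=-\frac{f'}{1+f'}\circ(\mathrm{id}+h).
\]
With \(|f'|\le\frac14\) this yields \(|h'|\le\frac13\), which is the second bound in \eqref{eq:5.8}, and also \(\|h\|_{L^\infty}\le\|f\|_{L^\infty}\).

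For \(\|h\|_{H_z^n}\le C\|f\|_{H_z^n}\), I will induct on the derivative order. Write \(g=-\frac{f'}{1+f'}\). This is a smooth function of \(f'\), so \(\|g\|_{H_z^{n-1}}\le C\|f\|_{H_z^n}\) by the composition estimate in one dimension (\(H_z^{n-1}\) is an algebra since \(n-1\ge2\)). The key relation is \(h'=g\circ(\mathrm{id}+h)\). Applying Faà di Bruno to \(\partial^{j}(g\circ\Phi)\) with \(\Phi=\mathrm{id}+h\) gives terms \((g^{(i)}\circ\Phi)\prod\Phi^{(\ell_r)}\). Since \(\Phi'\in[\frac23,\frac43]\), we have \(\|u\circ\Phi\|_{L^2}\le C\|u\|_{L^2}\). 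The products are handled by Gagliardo--Nirenberg together with the \(L^\infty\) bounds already available, so that \(\|h\|_{H_z^{j+1}}\le C\|f\|_{H_z^n}\) follows from the bounds at lower orders for \(j\le n-1\). The top-order factor \(h^{(n)}\) appears only linearly, multiplied by \(g'\circ\Phi\), which is bounded. This is the main technical step. It is routine, but the bookkeeping must keep the constant dependent only on \(n\) and \(L_z\), and this is where smallness is used.

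For \eqref{eq:5.9}, the plan is Taylor expansion in the fixed-point identity. We have
\[
 h+f=f-f(\mathrm{id}+h)=-\int_0^1 f'(\mathrm{id}+\tau h)\,\mathrm d\tau\;h.
\]
The factor \(\int_0^1 f'(\mathrm{id}+\tau h)\,\mathrm d\tau\) lies in \(H_z^{n-1}\) with norm \(\le C\|f\|_{H_z^n}\), by the same composition estimate applied to each \(\mathrm{id}+\tau h\), uniformly in \(\tau\). Combined with \(\|h\|_{H_z^{n-1}}\le C\|f\|_{H_z^n}\) and the algebra property of \(H_z^{n-1}\), this gives the quadratic bound. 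For the cubic bound, expand to second order:
\[
 f(\mathrm{id}+h)=f+f'h+\int_0^1(1-\tau)f''(\mathrm{id}+\tau h)\,\mathrm d\tau\;h^2 .
\]
Then
\[
 h+f-ff'=-f'(h+f)-\int_0^1(1-\tau)f''(\mathrm{id}+\tau h)\,\mathrm d\tau\;h^2 .
\]
The first term is bounded in \(H_z^{n-2}\) by \(\|f'\|_{H_z^{n-1}}\|h+f\|_{H_z^{n-1}}\le C\|f\|_{H_z^n}^3\). In the second term the factor \(f''\circ(\mathrm{id}+\tau h)\) lies in \(H_z^{n-2}\), which is an algebra when \(n-2\ge1\), with norm \(\le C\|f\|_{H_z^n}\). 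Together with \(\|h^2\|_{H_z^{n-2}}\le C\|f\|_{H_z^n}^2\), this gives the cubic bound.
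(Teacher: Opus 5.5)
Your proposal is correct and follows essentially the same route as the paper. Both arguments use the inverse identity \(h=-f\circ(\mathrm{id}+h)\) with \(|h'|\le\frac13\) and an inductive Sobolev bound on \(h\) that uses change of variables at top order. The paper differentiates \(h+f(\cdot+h)=0\) and divides by \(1+f'(\cdot+h)\ge\frac34\), which is equivalent to your \(h'=g\circ(\mathrm{id}+h)\). Both then use the uniform right-composition bound for \(\mathrm{id}+\tau h\), which is \eqref{eq:5.10} in the paper, and the identical first- and second-order Taylor expansions for \eqref{eq:5.9}.

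Two small remarks. First, in your recursion \(h^{(j+1)}=\partial^j(g\circ\Phi)\) the highest derivative of \(h\) on the right is \(h^{(j)}\), not \(h^{(n)}\), so the induction is explicit and nothing needs to be absorbed. Second, to make the Fa\`a di Bruno step rigorous for general \(f\in H_z^n\), you need either the paper's smooth-case-plus-mollification argument or a Sobolev chain rule for composition with \(C^1\) bi-Lipschitz maps.
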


\begin{proof}
Choose \(\delta\le1\) so that \(\|f'\|_{L_z^\infty}\le\frac14\).
Since \(f\) is bounded and \(1+f'\ge\frac34\), the lift
\(\mathrm{id}+f\) is an increasing \(C^1\) diffeomorphism of
\(\mathbb R\).  The lift property implies that its inverse
displacement \(h\) is \(L_z\)-periodic.  The inverse identities are
\[
 h(z)+f(z+h(z))=0,\qquad
 1+h'(z)=\frac1{1+f'(z+h(z))},
\]
so \(\|h'\|_{L_z^\infty}\le\frac13\).
Periodicity and change of variables over one period give
\[
 \|h\|_{L_z^2}^2
 =\int_0^{L_z}|f(z)|^2(1+f'(z))\,\mathrm dz
 \le\frac54\|f\|_{L_z^2}^2.
\]
For smooth \(f\) and \(2\le j\le n\), the differentiated inverse
identity has coefficient \(1+f'(z+h)\ge\frac34\) in front of
\(h^{(j)}\).  The other highest-order term is
\(f^{(j)}(z+h)(1+h')^j\).  Every remaining term contains lower
derivatives of \(f\) and \(h\), including a derivative of \(h\)
of order at least two.  Induction, using
\(\|f^{(j)}\|_{L_z^\infty}\le C\|f\|_{H_z^n}\) for \(j<n\)
and change of variables for \(f^{(n)}(z+h)\), yields
\[
 \sum_{j=1}^{n-1}\|h^{(j)}\|_{L_z^\infty}
 +\|h^{(n)}\|_{L_z^2}
 \le C\|f\|_{H_z^n}.
\]
Together with the \(L_z^2\) bound, this proves \eqref{eq:5.8}
for smooth \(f\).

For \(0\le\tau\le1\), the derivative of \(\mathrm{id}+\tau h\)
lies between \(\frac23\) and \(\frac43\).  We claim that, for every
integer \(0\le j\le n\) and every \(u\in H_z^j\),
\begin{equation}\label{eq:5.10}
 \|u\circ(\mathrm{id}+\tau h)\|_{H_z^j}
 \le C\|u\|_{H_z^j},
\end{equation}
with \(C\) depending only on \(n\) and \(L_z\), uniformly in
\(\tau\).  For smooth \(u\), change of variables and the derivative
bounds prove the cases \(j=0,1\).  For \(2\le j\le n\), the terms
containing \(u^{(j)}\) and \(h^{(j)}\), respectively, satisfy
\[
 \|(u^{(j)}\circ(\mathrm{id}+\tau h))(1+\tau h')^j\|_{L_z^2}
 \le C\|u^{(j)}\|_{L_z^2},
\]
and
\[
 \|(u'\circ(\mathrm{id}+\tau h))\tau h^{(j)}\|_{L_z^2}
 \le C\|u'\|_{L_z^\infty}\|h^{(j)}\|_{L_z^2}
 \le C\|u\|_{H_z^j}.
\]
In every intermediate term, derivatives of \(h\) have order at
most \(j-1\) and are bounded in \(L_z^\infty\), while change of
variables controls the composed derivative of \(u\) in \(L_z^2\).
This proves \eqref{eq:5.10} for smooth \(f,u\).

Applying Taylor's formula to the inverse identity, we write
\[
 h+f=-h\int_0^1f'(z+\tau h)\,\mathrm d\tau.
\]
By \eqref{eq:5.8}, \eqref{eq:5.10}, and the algebra property
of \(H_z^{n-1}\),
\[
 \|h+f\|_{H_z^{n-1}}
 \le C\|h\|_{H_z^{n-1}}\|f'\|_{H_z^{n-1}}
 \le C\|f\|_{H_z^n}^2.
\]
Expanding to second order, we also have
\[
 h+f-ff'=-(h+f)f'
 -h^2\int_0^1(1-\tau)f''(z+\tau h)\,\mathrm d\tau.
\]
Using the preceding bound and \eqref{eq:5.10} in the algebra
\(H_z^{n-2}\), we obtain
\[
 \|h+f-ff'\|_{H_z^{n-2}}
 \le C\|h+f\|_{H_z^{n-1}}\|f\|_{H_z^n}
     +C\|h\|_{H_z^n}^2\|f\|_{H_z^n}
 \le C\|f\|_{H_z^n}^3.
\]
This proves \eqref{eq:5.9}, including \(n=3\), since \(H_z^1\)
is an algebra.

For general \(f\in H_z^n\), choose real smooth periodic
mollifications \(f_\ell\to f\) in \(H_z^n\) with
\(\|f_\ell\|_{H_z^n}\le\|f\|_{H_z^n}\), and let \(h_\ell\)
be their inverse displacements.  The uniform derivative lower
bound and the inverse identities imply \(h_\ell\to h\) in \(C^1\),
while the uniform Sobolev bound gives
\(h_\ell\rightharpoonup h\) in \(H_z^n\).
Since \(f_\ell f_\ell'\to ff'\) in \(H_z^{n-1}\), weak lower
semicontinuity and \(C^1\) convergence pass
\eqref{eq:5.8}--\eqref{eq:5.9} to the limit.
For smooth \(u\) and fixed \(\tau\),
\(u\circ(\mathrm{id}+\tau h_\ell)\) converges uniformly to
\(u\circ(\mathrm{id}+\tau h)\).  Its uniform \(H_z^j\) bound and
weak lower semicontinuity prove \eqref{eq:5.10} for the limiting
map, with the same constant for all \(\tau\in[0,1]\).
Density and the \(L_z^2\) change-of-variables bound extend the
estimate to every \(u\in H_z^j\).
\end{proof}

We now apply these identities to the transported temperature label. The normalized lift is kept fixed throughout the argument.

\begin{proof}[Proof of Theorem~\ref{thm:1.2}]
Let \((v,w,\theta,p)\) be the global solution of
Theorem~\ref{thm:1.1}, and set
\[
 \Phi(t,x,z)=z+\frac{\theta(t,x,z)}{\gamma},\qquad
 \Psi(t,x,\eta)=\Phi(t,x,\cdot)^{-1}(\eta).
\]
The initial maps are those in \eqref{eq:1.2.12}.
On every compact time interval, \eqref{eq:2.2.5} and Sobolev
embedding imply \(\theta\in C_tC^2\) and \(\theta_t\in C_tC^1\),
while \eqref{eq:1.2.4} ensures \(\Phi_z\ge\frac12\).
Thus \(\Psi\) is \(C^1\) in \((t,x,\eta)\), with the lift and
normalization properties \eqref{eq:5.1} and \eqref{eq:5.3}.
The periodic stream function for the full velocity is
\[
 \psi=\partial_z^{-1}v+\frac1\gamma\langle v\theta\rangle_z,
 \qquad (\partial_z\psi,-\partial_x\psi)=(v,w).
\]
The transport equation for \(\Phi\) and the inverse identity give,
at fixed \(\eta\),
\[
 \Psi_t=w(t,x,\Psi)-v(t,x,\Psi)\Psi_x
       =-\partial_x[\psi(t,x,\Psi(t,x,\eta))].
\]
Integrating in \(x\), we obtain
\begin{equation}\label{eq:5.11}
 \langle\Psi(t,\cdot,\eta)\rangle_x
 =\langle\Psi_0(\cdot,\eta)\rangle_x
 =\Psi_*(\eta),\qquad t\ge0,\quad\eta\in\mathbb R.
\end{equation}
For every continuous \(L_z\)-periodic function \(f\), the distribution
identity \eqref{eq:5.4} therefore reads
\[
 \langle f(\Phi(t))\rangle
 =\frac1{L_z}\int_0^{L_z}f(\eta)\Psi_*'(\eta)\,\mathrm d\eta
 =\langle f(\Phi_0)\rangle.
\]

We next identify the terminal profile.  Differentiation of the
initial inverse map gives
\begin{equation}\label{eq:5.12}
 \Psi_*'(\eta)=\frac1{L_x}\int_0^{L_x}
              \frac{\gamma}{R(0,x,\Psi_0(x,\eta))}\,\mathrm dx,
 \qquad
 \frac{\gamma}{\max_\Omega R(0)}\le\Psi_*'\le
              \frac{\gamma}{\min_\Omega R(0)}.
\end{equation}
Together with \eqref{eq:5.1}--\eqref{eq:5.3}, these bounds show
that \(\Psi_*\) and its inverse are increasing normalized lifts.
Let \(\Phi_\infty(z)=z+\gamma^{-1}S_\infty(z)\) and
\(\Psi_\infty=\Phi_\infty^{-1}\).
By \eqref{eq:1.2.11}, \(\theta(t)\to S_\infty\) in
\(H^3(\Omega)\hookrightarrow C^1(\Omega)\).
The derivative lower bounds in \eqref{eq:1.2.4} and
\eqref{eq:1.2.7} imply
\begin{equation*}
 \|\Psi(t)-\Psi_\infty\|_{L^\infty_{x,\eta}}
 \le\frac2\gamma\|\theta(t)-S_\infty\|_{L^\infty(\Omega)}
 \longrightarrow0.
\end{equation*}
Passing to the limit in \eqref{eq:5.11} identifies the fixed lifts
\(\Psi_\infty=\Psi_*\).  Inversion proves \eqref{eq:1.2.13},
and \eqref{eq:5.5} supplies the rearrangement characterization.
Using \eqref{eq:5.12}, we find
\[
 \gamma+S_\infty'(z)
 =\left[\frac1{L_x}\int_0^{L_x}
 \frac{\mathrm dx}{R(0,x,\Psi_0(x,\Psi_*^{-1}(z)))}\right]^{-1},
\]
which proves \eqref{eq:1.2.14}--\eqref{eq:1.2.15}.
This is the profile in \eqref{eq:4.59}, where the flux is averaged
at fixed \(z\), rather than at fixed \(\eta\).

For the quadratic correction, we apply Lemma~\ref{lem:5.1} twice.
The slice estimate
\begin{equation*}
 \sup_{x\in\mathbb T_x}\|\theta_0(x,\cdot)\|_{H_z^{s-1}}
 \le C\|\theta_0\|_{H^1(\mathbb T_x;H_z^{s-1})}
 \le C\|\theta_0\|_{H^s(\Omega)}
\end{equation*}
permits the first application to
\(f=\gamma^{-1}\theta_0(x,\cdot)\), with \(n=s-1\ge3\),
uniformly in \(x\), after reducing \(\varepsilon_{\mathrm g}(s)\).
Set \(h_0(x,\eta)=\Psi_0(x,\eta)-\eta\) and
\(h_*(\eta)=\langle h_0(\cdot,\eta)\rangle_x\).
Vertical Fourier truncations, whose coefficients are continuous in
\(x\), establish strong measurability in \(H_z^{s-1}\), and the bound
\eqref{eq:5.8} justifies this Bochner average.
Averaging \eqref{eq:5.8}--\eqref{eq:5.9} at fixed \(\eta\), we obtain
\begin{equation}\label{eq:5.13}
\begin{aligned}
 &\sup_x\|h_0(x,\cdot)\|_{H_z^{s-1}}+\|h_*\|_{H_z^{s-1}} \le C\|\theta_0\|_{H^s},\\
 &\left\|h_*+\frac{\langle\theta_0\rangle_x}{\gamma}\right\|_{H_z^{s-2}} \le C\|\theta_0\|_{H^s}^2,
\end{aligned}
\end{equation}
and
\begin{equation}\label{eq:5.14}
 \left\|h_*+\frac{\langle\theta_0\rangle_x}{\gamma}-\frac1{2\gamma^2}\partial_z\langle\theta_0^2\rangle_x\right\|_{H_z^{s-3}}
 \le C\|\theta_0\|_{H^s}^3.
\end{equation}
Here the initial profiles and their vertical derivatives are evaluated at \(\eta\), and the quadratic term uses
\(\langle\theta_0\partial_z\theta_0\rangle_x=\frac12\partial_z\langle\theta_0^2\rangle_x\).

By \eqref{eq:5.13}, a further reduction of the common threshold
\(\varepsilon_{\mathrm g}(s)\) permits the second application to
\(f=h_*\), again with \(n=s-1\).
Since \(\Psi_*^{-1}-\mathrm{id}=\gamma^{-1}S_\infty\),
\eqref{eq:5.9} gives, now with all profiles evaluated at \(z\),
\begin{equation}\label{eq:5.15}
 \left\|\frac{S_\infty}{\gamma}+h_*-h_*h_*'\right\|_{H_z^{s-3}}
 \le C\|h_*\|_{H_z^{s-1}}^3
 \le C\|\theta_0\|_{H^s}^3.
\end{equation}
The product difference can be written as
\[
 h_*h_*'-\frac{\langle\theta_0\rangle_x\langle\partial_z\theta_0\rangle_x}{\gamma^2}
 =\left(h_*+\frac{\langle\theta_0\rangle_x}{\gamma}\right)h_*'
 -\frac{\langle\theta_0\rangle_x}{\gamma}
       \left(h_*'+\frac{\langle\partial_z\theta_0\rangle_x}{\gamma}\right).
\]
Using \eqref{eq:5.13} in the algebra \(H_z^{s-3}\), we deduce
\begin{equation}\label{eq:5.16}
 \left\|h_*h_*'-\frac{\langle\theta_0\rangle_x\langle\partial_z\theta_0\rangle_x}{\gamma^2}\right\|_{H_z^{s-3}}
 \le C\left\|h_*+\frac{\langle\theta_0\rangle_x}\gamma\right\|_{H_z^{s-2}}
       \left(\|h_*\|_{H_z^{s-1}}+\|\langle\theta_0\rangle_x\|_{H_z^{s-1}}\right)
 \le C\|\theta_0\|_{H^s}^3.
\end{equation}
Since
\[
 \partial_z\langle\theta_0^2\rangle_x
 -2\langle\theta_0\rangle_x\langle\partial_z\theta_0\rangle_x
 =\partial_z\langle\Theta_0^2\rangle_x,
\]
estimates \eqref{eq:5.14}--\eqref{eq:5.16} prove the temperature
remainder in \eqref{eq:1.2.17}.
The correction satisfies \eqref{eq:1.2.16}, because
\[
 \|\partial_z\langle\Theta_0^2\rangle_x\|_{H_z^{s-1}}
 \le C\|\Theta_0^2\|_{H^s}\le C\|\theta_0\|_{H^s}^2.
\]
Both the correction and the temperature remainder have zero vertical
mean.  Moreover,
\[
 \partial_z^{-1}\partial_z\langle\Theta_0^2\rangle_x
 =\langle\Theta_0^2\rangle_x-\langle\Theta_0^2\rangle.
\]
Thus \(p_\infty=g\beta\partial_z^{-1}S_\infty\) and
\eqref{eq:2.1.3} establish the cubic pressure remainder in
\(H_z^{s-2}\), with the prescribed zero-mean normalization.
At \(s=4\), the product estimate above uses the algebra \(H_z^1\),
and the temperature and pressure remainders belong to \(H_z^1\)
and \(H_z^2\), respectively.

Finally, let \((v^{\mathrm{lin}},\Theta^{\mathrm{lin}})\) solve
\eqref{eq:3.1.1} with initial data
\((\Pi_{\ne0}^xv_0,\Theta_0)\), and set
\(w^{\mathrm{lin}}=-\mathcal Tv^{\mathrm{lin}}\).
For \(|\alpha|\le s-2\), the multiplier bounds ensure
\[
 \|\partial^\alpha v_0\|_2+\|\mathcal T\partial^\alpha v_0\|_2
 +\|\partial^\alpha\Theta_0\|_{H_z^2(\Omega)}
 \le C\bigl(\|v_0\|_{H^{s-1}}+\|\theta_0\|_{H^s}\bigr).
\]
Applying Proposition~\ref{prop:3.1}\textup{(ii)} to the
differentiated, horizontally projected solution and summing over
\(\alpha\), we obtain
\begin{equation}\label{eq:5.17}
 \|w^{\mathrm{lin}}(t)\|_{H^{s-2}}
 \le C\varepsilon(1+t)^{-1},
 \qquad
 \|\Theta^{\mathrm{lin}}(t)\|_{H^{s-2}}
 \le C\varepsilon(1+t)^{-\frac12}.
\end{equation}
The algebra property of \(H^{s-2}(\Omega)\), with \(s-2\ge2\),
therefore implies
\[
 \int_0^\infty\|\langle w^{\mathrm{lin}}\Theta^{\mathrm{lin}}\rangle_x
                   \|_{H_z^{s-2}}\,\mathrm dt
 \le C\varepsilon^2\int_0^\infty(1+t)^{-\frac32}\,\mathrm dt<\infty.
\]
On every finite interval, Lemma~\ref{lem:3.1} and
\(\partial_t\Theta^{\mathrm{lin}}=-\gamma w^{\mathrm{lin}}\)
ensure \(\Theta^{\mathrm{lin}}\in C^1([0,T];H^{s-2})\).
The product rule and horizontal averaging give, in \(H_z^{s-2}\),
\[
 \int_0^T\langle w^{\mathrm{lin}}\Theta^{\mathrm{lin}}\rangle_x\,\mathrm dt
 =\frac1{2\gamma}\left(\langle\Theta_0^2\rangle_x
                   -\langle(\Theta^{\mathrm{lin}}(T))^2\rangle_x\right).
\]
By \eqref{eq:5.17}, the last term tends to zero in \(H_z^{s-2}\).
Letting \(T\to\infty\) in that space and applying
\(\partial_z:H_z^{s-2}\to H_z^{s-3}\) proves
\eqref{eq:1.2.18} and completes the proof.
\end{proof}

We next characterize the terminal state as an energy minimizer under
the periodic label-distribution constraint.  No smallness assumption
is imposed on the competing states.
\begin{proposition}\label{prop:5.1}
Let \(s\ge4\) be an integer, and let \(\Phi_0,\Psi_*\) and
\(S_\infty\) be as in Theorem~\ref{thm:1.2}.
Let \((\widetilde v,\vartheta)\in H^s(\Omega)\times H^s(\Omega)\)
be real-valued and satisfy
\[
 \langle\widetilde v\rangle_z=\langle\vartheta\rangle_z=0,
 \qquad \gamma+\partial_z\vartheta>0.
\]
Suppose that \(\Phi_\vartheta=z+\gamma^{-1}\vartheta\) and \(\Phi_0\)
have the same distribution modulo \(L_z\), so that for every
continuous \(L_z\)-periodic function \(f\),
\[
 \langle f(\Phi_\vartheta)\rangle=\langle f(\Phi_0)\rangle.
\]
Then
\begin{equation}\label{eq:5.18}
 \|\vartheta\|_2^2-\|S_\infty\|_2^2
 =\gamma^2\int_0^{L_x}\int_0^{L_z}
       |\Psi_\vartheta(x,\eta)-\Psi_*(\eta)|^2\,\mathrm d\eta\,\mathrm dx,
\end{equation}
and
\begin{equation}\label{eq:5.19}
 \frac{1}{2}\|\widetilde v\|_2^2
 +\frac{g\beta}{2\gamma}\|\vartheta\|_2^2
 \ge\frac{g\beta}{2\gamma}\|S_\infty\|_2^2.
\end{equation}
Equality in \eqref{eq:5.19} holds if and only if
\((\widetilde v,\vartheta)=(0,S_\infty)\).
Thus \((0,S_\infty)\) is the unique minimizer of the quadratic
perturbation energy in this class.
\end{proposition}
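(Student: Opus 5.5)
The plan is to reduce everything to the change-of-variables identity \eqref{eq:5.6} and the uniqueness of the normalized averaged inverse established after \eqref{eq:5.5}. Since \(s\ge4\), \(\vartheta\in H^s(\Omega)\hookrightarrow C^1(\Omega)\). Together with \(\langle\vartheta\rangle_z=0\) and \(\gamma+\partial_z\vartheta>0\), this places \(\vartheta\) in the class considered at the start of Section~\ref{sec:5}. Hence \(\Psi_\vartheta\) is an increasing \(C^1\) lift satisfying \eqref{eq:5.1} and \eqref{eq:5.3}, and \(A_\vartheta=\langle\Psi_\vartheta(\cdot,\eta)\rangle_x\) obeys the distribution identity \eqref{eq:5.4}.

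\emph{Step 1: identify the averaged inverse.} By hypothesis, \(\Phi_\vartheta\) and \(\Phi_0\) have the same periodic distribution. The remark after \eqref{eq:5.5} says that two compatible labels with the same periodic distribution have averaged inverses with the same derivative and the same zero-mean displacement. Hence \(A_\vartheta=A_{\theta_0}=\Psi_*\), that is,
\[
\langle\Psi_\vartheta(\cdot,\eta)\rangle_x=\Psi_*(\eta)\qquad(\eta\in\mathbb R).
\]
Concretely, \eqref{eq:5.4} gives \(A_\vartheta'=\Psi_*'\), and \eqref{eq:5.3} fixes the constant of integration. I expect this step to be the main point requiring care. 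The argument is already in the text, but one has to check that the hypothesis of Proposition~\ref{prop:5.1} is exactly the equality of measures used there. One must also use the normalization of the lift \(\Phi_\vartheta=z+\gamma^{-1}\vartheta\), rather than an arbitrary translate, so that the constant vanishes.

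\emph{Step 2: the energy identity \eqref{eq:5.18}.} Apply \eqref{eq:5.6} at each fixed \(x\) and integrate in \(x\):
\[
\|\vartheta\|_2^2=\gamma^2\int_0^{L_x}\!\int_0^{L_z}|\Psi_\vartheta-\eta|^2\,\mathrm d\eta\,\mathrm dx .
\]
Next expand
\[
\Psi_\vartheta-\eta=(\Psi_\vartheta-\Psi_*)+(\Psi_*-\eta).
\]
The cross term is \(2\int\!\int(\Psi_\vartheta-\Psi_*)(\Psi_*-\eta)\), and it vanishes after integrating first in \(x\) at fixed \(\eta\), by Step~1. For the remaining square, recall from \eqref{eq:1.2.13} that \(\Psi_*\) is the inverse of \(\Phi_\infty=z+\gamma^{-1}S_\infty\). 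Here \(S_\infty\in H_z^s\subset C^1\) has zero mean and satisfies \(\gamma+S_\infty'>0\) by \eqref{eq:1.2.7}. Applying \eqref{eq:5.6} to the \(x\)-independent label \(S_\infty\) then gives
\[
\gamma^2\int_0^{L_x}\!\int_0^{L_z}|\Psi_*-\eta|^2=\|S_\infty\|_2^2 .
\]
This proves \eqref{eq:5.18}. All integrands are periodic in \(\eta\), so every integral may be taken over one period.

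\emph{Step 3: minimality and uniqueness.} Inequality \eqref{eq:5.19} follows at once from \eqref{eq:5.18}, since \(\frac12\|\widetilde v\|_2^2\ge0\) and the right side of \eqref{eq:5.18} is nonnegative. Suppose equality holds in \eqref{eq:5.19}. Then \(\widetilde v=0\), and \(\Psi_\vartheta(x,\eta)=\Psi_*(\eta)\) almost everywhere, hence everywhere by continuity. Inverting in the vertical variable at each \(x\) gives \(\Phi_\vartheta(x,\cdot)=\Psi_*^{-1}=\Phi_\infty\), so \(\vartheta=S_\infty\). Conversely, the pair \((0,S_\infty)\) belongs to the admissible class. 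Indeed, \(\Phi_\infty\) has the same distribution as \(\Phi_0\) by the rearrangement property stated after Theorem~\ref{thm:1.2}, which follows from \eqref{eq:5.5}. For this pair both sides of \eqref{eq:5.19} agree. This yields the unique-minimizer statement.
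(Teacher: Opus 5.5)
Your proposal is correct and follows essentially the same route as the paper. You identify \(\langle\Psi_\vartheta(\cdot,\eta)\rangle_x=\Psi_*\) via \eqref{eq:5.4} and \eqref{eq:5.3}, apply \eqref{eq:5.6} to \(\vartheta\) and \(S_\infty\), use horizontal orthogonality to remove the cross term, and conclude the equality case by continuity and vertical inversion.
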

\begin{proof}
Theorems~\ref{thm:1.1} and \ref{thm:1.2}, together with
\eqref{eq:5.5}, show that \((0,S_\infty)\) is admissible and
that its inverse label is \(\Psi_*\).
For any competing state, the distribution identity \eqref{eq:5.4}
implies \(A_\vartheta'=\Psi_*'\).
The zero-mean normalization \eqref{eq:5.3} fixes the additive
constant, so for every \(\eta\in\mathbb R\),
\[
 \langle\Psi_\vartheta(\cdot,\eta)\rangle_x
 =A_\vartheta(\eta)=\Psi_*(\eta).
\]
Apply \eqref{eq:5.6} to \(\vartheta\) and \(S_\infty\), extending
functions of \(\eta\) constantly in \(x\).
Horizontal orthogonality then gives
\[
\begin{aligned}
 \frac{\|\vartheta\|_2^2-\|S_\infty\|_2^2}{\gamma^2}
 ={}&\|\Psi_\vartheta-\mathrm{id}\|_{L_{x,\eta}^2}^2
       -\|\Psi_*-\mathrm{id}\|_{L_{x,\eta}^2}^2\\
 ={}&\|\Psi_\vartheta-\Psi_*\|_{L_{x,\eta}^2}^2,
\end{aligned}
\]
which proves \eqref{eq:5.18}.
Multiplication by \(\frac{g\beta}{2\gamma}\) and addition of the
kinetic energy yield \eqref{eq:5.19}.
Since \(g,\beta,\gamma>0\), equality holds precisely when
\(\widetilde v=0\) and \(\Psi_\vartheta=\Psi_*\) almost everywhere.
By continuity and inversion, the latter condition is equivalent to
\(\vartheta=S_\infty\), completing the proof.
\end{proof}

%=====================================================================
% End of Section 5
%=====================================================================

%=====================================================================
% Section 6. Proof of Theorem 1.3
%=====================================================================
\section{Sharp algebraic decay}\label{sec:6}
We prove the decay estimates in Theorem~\ref{thm:1.3} using fluctuation energy estimates, spectral bounds, and forced response estimates.
The inverse-map identities of Section~\ref{sec:5} transfer fluctuation decay to the temperature difference from equilibrium,
while the diagnostic pressure formula controls the pressure difference.
Exact nonlinear solutions in Proposition~\ref{prop:6.1} provide matching lower bounds.
For related spectral and energy arguments for the Boussinesq system, see \cite[Section~6]{JangKim2023}.

\begin{proof}[Proof of the upper bounds in Theorem~\ref{thm:1.3}]
The zero datum is immediate.  For \(\varepsilon>0\), let
\((v,w,\theta,p)\) be the global solution of Theorem~\ref{thm:1.1}.
We use the bounds
\begin{equation}\label{eq:6.1}
 \sup_{t\ge0}\bigl(\|v(t)\|_{H^s}+\|\theta(t)\|_{H^s}\bigr)
 +\|S_\infty\|_{H_z^s}\le C_0\varepsilon,
 \qquad
 \int_0^\infty\mathcal C_v(t)\,\mathrm dt\le C_0\varepsilon.
\end{equation}
Choose \(\varepsilon_{\mathrm d}\le\min\{1,\varepsilon_{\mathrm g}\}\)
small enough that \(E_1^{\frac12}\le\min\{1,\delta_{\mathrm M}\}\)
and \(\|\theta_z\|_\infty\le\frac\gamma2\) for all time.
Proposition~\ref{prop:4.2} then applies on every finite interval.
The threshold will be reduced below after fixing the bootstrap
constants.

We first derive an energy inequality for the horizontal fluctuation.
Write \(S=\langle\theta\rangle_x\), \(\sigma=\gamma+S'\), and
\(q_0=\sigma^{-1}\).  The preceding smallness condition gives
\(\frac\gamma2\le\sigma\le\frac{3\gamma}{2}\).
For a sufficiently small fixed \(\lambda_{\mathrm d}>0\), set
\[
 E_4=\|v\|_2^2+g\beta\int_\Omega q_0\Theta^2
       +\lambda_{\mathrm d}
          \langle\mathcal L^{-1}\mathcal Tv,\Theta\rangle_{L^2}.
\]
Since \(\mathcal L^{-1}\mathcal T\) is bounded on \(L^2\),
\begin{equation}\label{eq:6.2}
 c\bigl(\|v\|_2^2+\|\Theta\|_2^2\bigr)
 \le E_4\le C\bigl(\|v\|_2^2+\|\Theta\|_2^2\bigr).
\end{equation}
On each finite interval, \eqref{eq:2.2.5} gives
\(v,\Theta\in H_t^1L^2\), \(q_0\in C_tW_z^{1,\infty}\), and
\(q_{0,t}\in C_tL_z^\infty\), so the weighted energy is absolutely
continuous.  The fluctuation equation is
\[
 \Theta_t+v\Theta_x+w\Theta_z+\sigma w=-S_t.
\]
Since \(\langle\Theta\rangle_x=\langle w\rangle_x=0\),
\[
 \int_\Omega q_0\Theta S_t=0,
 \qquad
 \int_\Omega vp_x=-g\beta\int_\Omega w\Theta.
\]
Pairing the fluctuation equation with \(g\beta q_0\Theta\) and
adding the momentum energy identity yields
\begin{equation}\label{eq:6.3}
 \frac{\mathrm d}{\mathrm dt}
  \left(\|v\|_2^2+g\beta\int_\Omega q_0\Theta^2\right)
 +2\|\mathcal L^{\frac12}v\|_2^2
 =g\beta\int_\Omega(q_{0,t}+wq_{0,z})\Theta^2.
\end{equation}
By the mean equation \eqref{eq:2.2.10},
\[
 q_{0,t}+wq_{0,z}
 =\frac{\partial_{zz}\langle w\Theta\rangle_x-wS''}{\sigma^2}.
\]
For almost every time, Sobolev embedding therefore implies
\[
 \|q_{0,t}+wq_{0,z}\|_\infty
 \le  \|w\|_{W^{2,\infty}}
        \bigl(\|\Theta\|_{W^{2,\infty}}+\|S\|_{W_z^{2,\infty}}\bigr)
 \le E_1^{\frac12}\mathcal C_v.
\]

The cross term supplies the temperature dissipation.  The diagnostic
formula and Poincar\'e's inequality give
\(\|w^0\|_2\le C\|\mathcal L^{\frac12}v\|_2\) and
\[
 \|\langle w\rangle_z\|_{L_x^2}
 \le C\bigl(\|v_x\|_2\|\theta\|_\infty
             +\|v\|_2\|\theta_x\|_\infty\bigr)
 \le CE_1^{\frac12}\|\mathcal L^{\frac12}v\|_2.
\]
Thus \(\|w\|_2\le C\|\mathcal L^{\frac12}v\|_2\).
Writing the nonlinearities in divergence form, we obtain
\[
 \sum_{i=1}^2\|\mathcal N_i\|_{H^{-1}}
 \le C\bigl(\|v\|_\infty+\|\theta\|_\infty\bigr)
           (\|v\|_2+\|w\|_2)
 \le CE_1^{\frac12}\|\mathcal L^{\frac12}v\|_2.
\]
Apply \eqref{eq:2.3.12} at order zero to \((v,\Theta)\), with
sources \(\Pi_{\ne0}^z\mathcal N_1\) and \(\mathcal Q\mathcal N_2\).
By \eqref{eq:2.3.3} and \eqref{eq:2.3.5}, the source pairings satisfy
\[
 |\langle\Pi_{\ne0}^z\mathcal N_1,
                 \mathcal L^{-1}\mathcal T\Theta\rangle|
 \le C\|\mathcal N_1\|_{H^{-1}}\|\mathcal M^{\frac12}\Theta\|_2
 \le CE_1^{\frac12}\|\mathcal L^{\frac12}v\|_2
                         \|\mathcal M^{\frac12}\Theta\|_2,
\]
and
\[
 |\langle\mathcal Q\mathcal N_2,
                 \mathcal L^{-1}\mathcal Tv\rangle|
 \le C\|\mathcal N_2\|_{H^{-1}}\|v\|_2
 \le CE_1^{\frac12}\|\mathcal L^{\frac12}v\|_2^2.
\]
The linear terms are controlled by \eqref{eq:2.3.8} and
Poincar\'e's inequality.
Applying Young's inequality with \(E_1^{\frac12}\le1\), we find
\begin{equation}\label{eq:6.4}
 \frac{\mathrm d}{\mathrm dt}
   \langle\mathcal L^{-1}\mathcal Tv,\Theta\rangle_{L^2}
 +\frac1{2\gamma}\|\mathcal M^{\frac12}\Theta\|_2^2
 \le C\|\mathcal L^{\frac12}v\|_2^2.
\end{equation}
Taking \(\lambda_{\mathrm d}\) smaller if necessary, we add
\(\lambda_{\mathrm d}\) times \eqref{eq:6.4} to \eqref{eq:6.3}.
By \eqref{eq:6.2}, the resulting inequality is
\begin{equation}\label{eq:6.5}
 E_4'+c\bigl(\|\mathcal L^{\frac12}v\|_2^2
                  +\|\mathcal M^{\frac12}\Theta\|_2^2\bigr)
 \le CE_1^{\frac12}\mathcal C_vE_4.
\end{equation}
The dual bounds above justify \eqref{eq:2.3.12} at strong-solution
regularity.  Hence \(E_4\in W^{1,1}(0,T)\), and \eqref{eq:6.5}
holds almost everywhere on each finite interval.

To obtain decay, we use the symbol bound \eqref{eq:2.3.1} and
Poincar\'e's inequality in the form
\[
 \|\Theta\|_{H^{-2}}\le C\|\mathcal M^{\frac12}\Theta\|_2,
 \qquad
 \|v\|_{H^{-2}}\le C\|\mathcal L^{\frac12}v\|_2.
\]
For \(f=v,\Theta\), Fourier interpolation yields
\[
 \|f\|_2^2\le\|f\|_{H^{-2}}^{\frac{2s}{s+2}}
                   \|f\|_{H^s}^{\frac4{s+2}}.
\]
Together with \eqref{eq:6.1} and \eqref{eq:6.2}, this implies
\[
 E_4\le C\varepsilon^{\frac4{s+2}}
       \bigl(\|\mathcal L^{\frac12}v\|_2^2
                   +\|\mathcal M^{\frac12}\Theta\|_2^2\bigr)^{\frac{s}{s+2}}.
\]
It follows from \eqref{eq:6.5} that
\begin{equation}\label{eq:6.6}
 E_4'+c\varepsilon^{-\frac4s}E_4^{1+\frac2s}
 \le C\varepsilon\mathcal C_v(t)E_4.
\end{equation}
Set \(I(t)=C\varepsilon\int_0^t\mathcal C_v(\tau)\,\mathrm d\tau\) and \(E_6=e^{-I}E_4\).
By \eqref{eq:6.1} and \eqref{eq:6.6}, we have
\[
 0\le I(t)\le C\varepsilon^2,\qquad
 E_6'+c\varepsilon^{-\frac4s}e^{\frac{2I}{s}}E_6^{1+\frac2s}\le0.
\]
The scalar comparison \eqref{eq:2.3.15}--\eqref{eq:2.3.16},
with \(\alpha=\frac s2\) and zero forcing, gives
\[
E_6(t)\le C\varepsilon^2(1+t)^{-\frac s2},
\]
since
\(E_6(0)\le C\varepsilon^2\) and \(e^{\frac{2I}{s}}\ge1\).
By \eqref{eq:6.1}--\eqref{eq:6.2} and interpolation, for
\(0\le q\le s\),
\begin{equation}\label{eq:6.7}
 \|v(t)\|_{H^q}+\|\Theta(t)\|_{H^q}
 \le C_q\varepsilon(1+t)^{-\frac{s-q}{4}}.
\end{equation}
We recover the full temperature difference from the inverse-map
identities.  Fix \(t\ge0\), and use the normalized lifts
\(\Phi(t),\Phi_\infty\) and inverse \(\Psi(t)\) from
Section~\ref{sec:5}.
By \eqref{eq:4.55} and \eqref{eq:1.2.15}, the vertical derivatives of
\(\Phi(t)\), \(z+\gamma^{-1}S(t,z)\), and \(\Phi_\infty\) lie between
\(\frac12\) and \(\frac32\).  Let \(\Psi_{\mathrm{av}}\) be the
inverse of \(z+\gamma^{-1}S(t,z)\) at this fixed time.
The conservation law \eqref{eq:5.11} gives
\(\langle\Psi\rangle_x=\Psi_*\), so horizontal orthogonality yields
\[
 \|\Psi-\Psi_{\mathrm{av}}\|_{L_{x,\eta}^2}^2
 =\|\Psi-\Psi_*\|_{L_{x,\eta}^2}^2
  +L_x\|\Psi_*-\Psi_{\mathrm{av}}\|_{L_\eta^2}^2.
\]
Apply \eqref{eq:5.7} with \(a=\frac12\), \(b=\frac32\), first to
\(\Phi(t),\Phi_\infty\) and then to
\(\Phi(t),z+\gamma^{-1}S(t,z)\).  Since
\(\Phi_\infty^{-1}=\Psi_*\), we obtain
\[
 \|\theta-S_\infty\|_2^2
 \le C\gamma^2\|\Psi-\Psi_*\|_{L_{x,\eta}^2}^2
 \le C\gamma^2\|\Psi-\Psi_{\mathrm{av}}\|_{L_{x,\eta}^2}^2
 \le C\|\Theta\|_2^2.
\]
The reverse bound follows from horizontal orthogonality.
Together with \eqref{eq:6.7}, this proves
\[
 \|\Theta(t)\|_2\le\|\theta(t)-S_\infty\|_2
 \le C\|\Theta(t)\|_2
 \le C\varepsilon(1+t)^{-\frac s4}.
\]
For \(0\le r<s\), interpolation with \eqref{eq:6.1} gives
\[
 \|\theta(t)-S_\infty\|_{H^r}
 \le\|\theta(t)-S_\infty\|_2^{1-\frac rs}
       \|\theta(t)-S_\infty\|_{H^s}^{\frac rs}
 \le C_r\varepsilon(1+t)^{-\frac{s-r}{4}}.
\]
This and \eqref{eq:6.7} prove \eqref{eq:1.2.19}.

For the componentwise estimates, Lemma~\ref{lem:3.3} applies with
\(F_i=\Pi_{\ne0}^z\mathcal N_i\), as in
Subsection~\ref{subsec:4.2}.  The projections are contractions in
the source norms.
At \(s=4\), only \(r=0\) is admissible for the componentwise
estimates.  It follows from \eqref{eq:4.36}, \eqref{eq:4.38},
and \eqref{eq:4.56} that
\begin{equation}\label{eq:6.8}
 \|\langle w(t)\rangle_z\|_{L_x^2}
       +\sum_{i=1}^2\|\mathcal N_i(t)\|_2
 \le C\varepsilon^2(1+t)^{-2}.
\end{equation}
Combining this estimate with the bound for \(w^0\) in
\eqref{eq:4.56} and using \(w=w^0+\langle w\rangle_z\),
we obtain \(\|w(t)\|_2\le C\varepsilon(1+t)^{-2}\).
For the horizontal velocity, interpolation between \eqref{eq:6.7}
and \eqref{eq:4.56} gives
\[
 \|\mathcal M^{\frac34}\Theta(t)\|_2
 \le\|\Theta(t)\|_2^{\frac14}
          \|\mathcal M\Theta(t)\|_2^{\frac34}
 \le C\varepsilon(1+t)^{-\frac74}.
\]
Substituting this bound and \eqref{eq:6.8} into \eqref{eq:3.2.4},
we use \eqref{eq:2.3.14} to get
\[
 \|v(t)\|_2\le C\varepsilon(1+t)^{-\frac74}
                     +C\varepsilon^2(1+t)^{-2}
             \le C\varepsilon(1+t)^{-\frac74}.
\]
This proves \eqref{eq:1.2.20}--\eqref{eq:1.2.21} at \(s=4\).
The pressure will be estimated for all admissible indices below.

We now assume \(s\ge5\).  To estimate the horizontal transport,
we use, for \(r\ge0\) and \(\mathcal Qf=f\),
\begin{equation}\label{eq:6.9}
 \|\partial_xf\|_{\mathcal W^r}
 \le C\|\mathcal Mf\|_{H^r}^{\frac13}
                \|f\|_{H^{r+5}}^{\frac23}.
\end{equation}
The constant is independent of \(r\), since
\[
 \sum_{\mathcal K_*^x}
 \frac{k^2}{\rho^{\frac23}\langle k,m\rangle^{\frac{20}{3}}}
 \le C\sum_{\mathcal K_*^x}\langle k,m\rangle^{-\frac{10}{3}}<\infty.
\]
Cauchy--Schwarz bounds the left-hand side of \eqref{eq:6.9} by
\(C\|\mathcal M^{\frac13}f\|_{H^{r+\frac{10}{3}}}\).
Interpolation between \(\|\mathcal Mf\|_{H^r}\) and
\(\|f\|_{H^{r+5}}\) proves the estimate.

It suffices to close a joint bootstrap at \(r=0,s-5\), with a single index when \(s=5\).  Interpolation will cover the remaining orders.
For \(A\ge1\) to be chosen below, we impose the following bootstrap bounds at these indices for all \(t\in[0,T]\):
\begin{equation}\label{eq:6.10}
\left\{
\begin{aligned}
 &E_{5,r}(t)^{\frac12}
 \le A\varepsilon(1+t)^{-\frac{s-r+4}{4}},\\
 &\|v(t)\|_{H^r}
 \le A\varepsilon(1+t)^{-\frac{s-r+3}{4}},\\
 &\|w^0(t)\|_{H^r}
 \le A\varepsilon(1+t)^{-\frac{s-r+4}{4}}.
\end{aligned}
\right.
\end{equation}
Strong-solution regularity ensures continuity of the quantities in
\eqref{eq:6.10}.  Their initial bounds follow from \eqref{eq:4.34}
and the diagnostic estimate
\(\|w^0(0)\|_{H^r}\le C\|v_0\|_{H^{r+1}}\).
Thus sufficiently large \(A\) starts the strict bootstrap.

For \(f=\mathcal Qv\) or \(f=\Theta\),
\eqref{eq:6.9}, \eqref{eq:6.7}, and \eqref{eq:6.10} imply
\[
 \|\partial_xf(t)\|_{\mathcal W^r}
 \le C\|\mathcal Mf(t)\|_{H^r}^{\frac13}
       \|f(t)\|_{H^{r+5}}^{\frac23}
 \le C_rA^{\frac13}\varepsilon(1+t)^{-\frac{s-r-2}{4}}.
\]
Using the bounds at orders \(r\) and zero, and
\(\partial_xv=\partial_x\mathcal Qv\), we apply
\eqref{eq:2.1.6} to obtain
\begin{equation}\label{eq:6.11}
\begin{aligned}
 \|vv_x\|_{H^r}+\|v\Theta_x\|_{H^r}
 \le{}&C_r\|v\|_{H^r}
          \bigl(\|\partial_x\mathcal Qv\|_{\mathcal W^0}
                 +\|\Theta_x\|_{\mathcal W^0}\bigr)\\
 &+C_r\|v\|_2
          \bigl(\|\partial_x\mathcal Qv\|_{\mathcal W^r}
                 +\|\Theta_x\|_{\mathcal W^r}\bigr)\\
 \le{}&C_rA^{\frac43}\varepsilon^2
                    (1+t)^{-\frac{2s-r+1}{4}}.
\end{aligned}
\end{equation}
The undifferentiated velocity is the full \(v\), including its
horizontal mean.

For the spectral energy, set \(\alpha_r=\frac{s-r+4}{2}\).
The symbol bound \eqref{eq:2.3.1} implies, on \(\mathcal K_*^x\),
\[
 \rho^{2-\alpha_r}\langle k,m\rangle^{2r}
 =\rho^{-\frac{s-r}{2}}\langle k,m\rangle^{2r}
 \le C_r\langle k,m\rangle^{2s}.
\]
Hence, applying H\"older's inequality to the Fourier sum and then using \eqref{eq:6.1}, we have
\[
\begin{aligned}
 \|\mathcal M\Theta\|_{H^r}^2
 \le{}&\left(\|\mathcal M^{\frac32}\Theta\|_{H^r}^2\right)^{\frac{\alpha_r}{\alpha_r+1}}
 \left(L_xL_z\sum_{\mathcal K_*^x}
  \rho^{2-\alpha_r}\langle k,m\rangle^{2r}|\widehat\Theta|^2
 \right)^{\frac1{\alpha_r+1}}\\
 \le{}&C_r\varepsilon^{\frac2{\alpha_r+1}}
       \|\mathcal M^{\frac32}\Theta\|_{H^r}^{\frac{2\alpha_r}{\alpha_r+1}}.
\end{aligned}
\]
For the velocity, the boundedness of \(\mathcal M\) and
Poincar\'e's inequality imply
\[
 \|\mathcal Mv\|_{H^r}^2
 \le C_r\varepsilon^{\frac2{\alpha_r+1}}
       \|\mathcal L^{\frac12}\mathcal Mv\|_{H^r}^{\frac{2\alpha_r}{\alpha_r+1}}.
\]
Substitution into \eqref{eq:4.35}, using \eqref{eq:4.34} and
\eqref{eq:6.11}, gives
\begin{equation*}
 E_{5,r}'+c_r\varepsilon^{-\frac2{\alpha_r}}
                  E_{5,r}^{1+\frac1{\alpha_r}}
 \le C_rA^{\frac83}\varepsilon^4
                  (1+t)^{-\frac{2s-r+1}{2}}.
\end{equation*}
The exponents satisfy
\begin{equation}\label{eq:6.12}
 \frac{2s-r+1}{2}-(\alpha_r+1)=\frac{s-5}{2}\ge0.
\end{equation}
Choose \(K\ge1\), independently of \(A\), so that at both endpoints
\[
 E_{5,r}(0)\le K\varepsilon^2,
 \qquad
 c_rK^{1+\frac1{\alpha_r}}-\alpha_rK\ge1.
\]
Provided \(C_rA^{\frac83}\varepsilon^2\le1\), the comparison
\eqref{eq:2.3.15}--\eqref{eq:2.3.16} applies by \eqref{eq:6.12}
and yields
\begin{equation}\label{eq:6.13}
 E_{5,r}(t)\le K\varepsilon^2(1+t)^{-\alpha_r},
 \qquad
 \|\mathcal M\Theta(t)\|_{H^r}
 \le C_r\sqrt K\,\varepsilon(1+t)^{-\frac{s-r+4}{4}}.
\end{equation}
The case \(s=5\) is included, with equality in \eqref{eq:6.12}.

To improve the velocity bounds, use \eqref{eq:4.36},
\eqref{eq:4.38}, and \eqref{eq:6.11} to obtain
\begin{equation}\label{eq:6.14}
 \|\langle w\rangle_z\|_{H_x^r}
       +\sum_{i=1}^2\|\mathcal N_i\|_{H^r}
 \le C_r\varepsilon\|w^0\|_{H^r}
       +C_rA^{\frac43}\varepsilon^2
                       (1+t)^{-\frac{2s-r+1}{4}}.
\end{equation}
The second term decays faster than
\((1+t)^{-\frac{s-r+4}{4}}\).  By \eqref{eq:3.2.3},
\eqref{eq:6.10}, \eqref{eq:6.13}--\eqref{eq:6.14}, and the
convolution bound \eqref{eq:2.3.14},
\[
 \sup_{0\le t\le T}(1+t)^{\frac{s-r+4}{4}}
                \|w^0(t)\|_{H^r}
 \le C_r\sqrt K\,\varepsilon
       +C_r\varepsilon^2\bigl(A+A^{\frac43}\bigr).
\]
For the slow contribution to the horizontal velocity,
\begin{equation}\label{eq:6.15}
 \|\mathcal M^{\frac34}\Theta\|_{H^r}
 \le\|\Theta\|_{H^r}^{\frac14}
        \|\mathcal M\Theta\|_{H^r}^{\frac34}
 \le C_rK^{\frac38}\varepsilon
                         (1+t)^{-\frac{s-r+3}{4}}.
\end{equation}
Using \eqref{eq:6.15} in \eqref{eq:3.2.4}, the same
convolution estimate gives
\[
 \sup_{0\le t\le T}(1+t)^{\frac{s-r+3}{4}}
                \|v(t)\|_{H^r}
 \le C_rK^{\frac38}\varepsilon
       +C_r\varepsilon^2\bigl(A+A^{\frac43}\bigr).
\]
Let \(C_*\ge1\) dominate the initial and estimate constants at both
endpoints.  Fix \(A\ge4C_*(1+\sqrt K)\), and then decrease
\(\varepsilon_{\mathrm d}\) so that
\[
 C_*A^{\frac83}\varepsilon_{\mathrm d}^2\le1,
 \qquad
 C_*\varepsilon_{\mathrm d}(A+A^{\frac43})\le\frac A4.
\]
All three bounds in \eqref{eq:6.10} improve with \(A\) replaced by
\(\frac A2\).  A first-exit argument extends these estimates to
all finite intervals, with constants independent of \(T\).

For \(s>5\), interpolate the endpoint bounds for
\(\mathcal Mv,\mathcal M\Theta,v,w^0\).
Their exponents are affine in \(r\), so the same rates hold for
\(0\le r\le s-5\).  Interpolating \eqref{eq:6.11} in the same way
also gives
\[
 \|vv_x\|_{H^r}+\|v\Theta_x\|_{H^r}
 \le C_r\varepsilon^2(1+t)^{-\frac{2s-r+1}{4}}.
\]
For \(s=5\), these are the bounds already proved at \(r=0\).
The mean-velocity estimate \eqref{eq:4.36} therefore implies
\begin{equation}\label{eq:6.16}
\begin{aligned}
 &\|w^0(t)\|_{H^r}
 \le C_r\varepsilon(1+t)^{-\frac{s-r+4}{4}},\\
 &\|\langle w\rangle_z(t)\|_{H_x^r}
 \le C_r\varepsilon^2(1+t)^{-\frac{s-r+4}{4}}.
\end{aligned}
\end{equation}
Since \(w=w^0+\langle w\rangle_z\), \eqref{eq:6.16}
and the bound for \(v\) prove
\eqref{eq:1.2.20}--\eqref{eq:1.2.21} for the full velocity.

It remains to estimate the pressure for
\(0\le r\le\max\{0,s-5\}\).  On \(\mathcal K_*^x\),
\[
 \frac1{m^4\rho(k,m)}
 =\frac{a(k,m)}{N^2k^2m^2}
 =\frac1{N^2}\left(\frac{\mu_x}{m^2}+\frac{\mu_z}{k^2}\right)\le C.
\]
Thus \(|m|^{-1}\le C\rho^{\frac14}\).  The temperature and spectral
bounds imply
\begin{equation}\label{eq:6.17}
 \|\partial_z^{-1}\Theta(t)\|_{H^r}
 \le C\|\mathcal M^{\frac14}\Theta(t)\|_{H^r}
 \le C\|\Theta(t)\|_{H^r}^{\frac34}
          \|\mathcal M\Theta(t)\|_{H^r}^{\frac14}
 \le C_r\varepsilon(1+t)^{-\frac{s-r+1}{4}}.
\end{equation}
For the horizontal mean, we combine \eqref{eq:2.2.10} with
\eqref{eq:4.59} and apply \(\partial_z^{-1}\) to obtain
\[
 \partial_z^{-1}(\langle\theta(t)\rangle_x-S_\infty)
 =\Pi_{\ne0}^z\int_t^\infty
                \langle w(\tau)\Theta(\tau)\rangle_x\,\mathrm d\tau.
\]
Here the projection removes the constant part of the flux.
Since \(r+2\le s\), \eqref{eq:2.1.6}--\eqref{eq:2.1.7} and
the decay bounds give
\[
\begin{aligned}
 \|\langle w\Theta\rangle_x\|_{H_z^r}
 \le{}&C_r\bigl(\|w\|_{H^r}\|\Theta\|_{H^2}
                       +\|w\|_2\|\Theta\|_{H^{r+2}}\bigr)\\
 \le{}&C_r\varepsilon^2(1+t)^{-\frac{2s-r+2}{4}}.
\end{aligned}
\]
The full \(w\) is controlled by \eqref{eq:6.8} and
\eqref{eq:4.56} at \(s=4\), and by \eqref{eq:6.16} at \(s\ge5\).
The time exponent exceeds one, so the flux is absolutely integrable
in \(H_z^r\), with
\[
 \|\partial_z^{-1}(\langle\theta(t)\rangle_x-S_\infty)\|_{H_z^r}
 \le C_r\varepsilon^2(1+t)^{-\frac{2s-r-2}{4}}.
\]
The difference from the exponent in \eqref{eq:6.17} is
\(\frac{s-3}{4}>0\), so this contribution decays faster.
For the quadratic pressure term, symmetry in Fourier convolution
gives
\[
\begin{aligned}
 \|\langle v^2\rangle_z-\langle v^2\rangle\|_{H^r}
 \le{}&C_r\|v^2\|_{H^r}
 \le C_r\|v\|_{\mathcal W^0}\|v\|_{H^r}\\
 \le{}&C_r\varepsilon^2(1+t)^{-\frac{s-r+3}{4}},
\end{aligned}
\]
where \(\|v\|_{\mathcal W^0}\le C\varepsilon\) follows from
\eqref{eq:6.1} and \eqref{eq:2.1.7}.  Substitution into
\eqref{eq:2.2.2}, with \(p_\infty=g\beta\partial_z^{-1}S_\infty\),
proves \eqref{eq:1.2.22}.

At \(s=4\), only \(r=0\) enters the componentwise estimates.
For \(s\ge5\), all additional smallness conditions were imposed
at \(r=0,s-5\).  Interpolation and pressure recovery require no
further restriction, so a single \(\varepsilon_{\mathrm d}(s)>0\),
independent of \(r\), suffices for all the upper bounds.
\end{proof}

To prove optimality, we fix the horizontal frequency and let the
vertical frequency increase.  The transport terms vanish for the
following family of exact nonlinear solutions.
\begin{proposition}\label{prop:6.1}
Let \(s\ge4\) be an integer.  There exists
\(\eta_*\in(0,\varepsilon_{\mathrm d}]\), depending only on \(s\),
the fixed parameters, and the periods, such that for every
\(0<\eta\le\eta_*\) there are smooth global solutions
\((v_n,w_n,\theta_n,p_n)\) of \eqref{eq:1.1.1} and times
\(t_n\to\infty\) such that
\[
 \langle v_n\rangle_z=\langle\theta_n\rangle_z=0,\qquad
 \gamma+\partial_z\theta_n\ge\frac\gamma2,\qquad
 \|v_n(0)\|_{H^s}+\|\theta_n(0)\|_{H^s}=\eta.
\]
The terminal profiles vanish, and \(\Theta_n=\theta_n\).
For every \(0\le r\le s\),
\begin{equation}\label{eq:6.18}
 \|\theta_n(t_n)\|_{H^r}
 \ge c_r\eta(1+t_n)^{-\frac{s-r}{4}}.
\end{equation}
For \(0\le r\le\max\{0,s-5\}\),
\begin{align}
 \|v_n(t_n)\|_{H^r}
 \ge{}&c_r\eta(1+t_n)^{-\frac{s-r+3}{4}},\label{eq:6.19}\\
 \|w_n(t_n)\|_{H^r}
 \ge{}&c_r\eta(1+t_n)^{-\frac{s-r+4}{4}},\label{eq:6.20}\\
 \|p_n(t_n)\|_{H^r}
 \ge{}&c_r\eta(1+t_n)^{-\frac{s-r+1}{4}}.\label{eq:6.21}
\end{align}
These bounds hold for all sufficiently large \(n\), with \(c_r>0\)
and the lower frequency cutoff independent of \(\eta\).
\end{proposition}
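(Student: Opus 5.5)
The plan is to use single plane waves. For these the nonlinear system reduces exactly to the linearized system \eqref{eq:3.1.1}, and the slow eigenmode decays at the rate \(\rho\sim k^2m^{-4}\).

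Fix \(k=\frac{2\pi}{L_x}\) and take \(m=m_n=\frac{2\pi n}{L_z}\), with \(n\to\infty\). Write \(\xi=kx+mz\). For \(n\) large, \((k,m)\in\Sigma_2^x\), so the roots \(\lambda_\pm\) are real and negative, and \eqref{eq:3.1.7} gives \(\rho\le-\lambda_+\le C\rho\). Define
\[
 \theta_n=c_ne^{\lambda_+t}\cos\xi,\qquad v_n=h_+c_ne^{\lambda_+t}\cos\xi,\qquad w_n=-\frac km v_n,\qquad p_n=g\beta\partial_z^{-1}\theta_n,
\]
with \(h_+\) as in \eqref{eq:3.2.6}. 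Choose \(c_n>0\) so that \(\|v_n(0)\|_{H^s}+\|\theta_n(0)\|_{H^s}=\eta\). Since \(h_+\) is bounded, this gives \(c_n\simeq\eta\, m^{-s}\), with constants independent of \(\eta\).

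First I verify that this is an exact solution of \eqref{eq:1.1.1}.

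1. All fields are functions of \(\xi\) alone, and \(kv_n+mw_n=0\). Hence \(v_n\partial_x+w_n\partial_z=(kv_n+mw_n)\partial_\xi=0\), and every transport term vanishes.
2. The product \(v_n\theta_n\) is a constant plus a multiple of \(\cos 2\xi\). Since \(m\neq0\), the vertical average of \(\cos2\xi\) vanishes, so \(\langle v_n\theta_n\rangle_z\) is independent of \(x\). By \eqref{eq:2.2.1}, \(\langle w_n\rangle_z=0\), and therefore the full diagnostic \(w_n\) equals \(w^0_n=-\mathcal Tv_n\).
3. For the same reason \(\langle v_n^2\rangle_z=\langle v_n^2\rangle\), so \eqref{eq:2.2.2} reduces to \(p_n=g\beta\partial_z^{-1}\theta_n\).
4. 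With transport gone, the remaining equations are the modal system \(\widehat U'=\mathcal A\widehat U\). By construction of \(h_+\), the vector \((h_+,1)^{\mathsf T}\) is the \(\lambda_+\)-eigenvector. Using the identities after \eqref{eq:3.2.6}, the velocity equation reads \(\lambda_+h_+=-a h_+-\frac{g\beta k}{m}\), which is the characteristic equation.

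The solutions are smooth and global. The constraints \(\langle v_n\rangle_z=\langle\theta_n\rangle_z=0\) and \(\langle\theta_n\rangle_x=0\) hold, so \(\Theta_n=\theta_n\). Since \(\theta_n\to0\), uniqueness of the limit in Theorem~\ref{thm:1.1} gives \(S_\infty=p_\infty=0\). Also \(\|\partial_z\theta_n\|_\infty\le c_nm\lesssim\eta\, m^{1-s}\le\frac\gamma2\) for \(\eta\le\eta_*\).

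Next I choose the times \(t_n=(-\lambda_+)^{-1}\simeq\rho^{-1}\simeq m^4/k^2\). Then \(t_n\to\infty\), \(e^{\lambda_+t_n}=e^{-1}\), and \(m\simeq(1+t_n)^{1/4}\). For a single mode, \(\|\cos\xi\|_{H^r}\simeq\langle k,m\rangle^r\simeq m^r\). This gives
\[
 \|\theta_n(t_n)\|_{H^r}\simeq\eta\, m^{r-s}\simeq\eta(1+t_n)^{-\frac{s-r}4}.
\]
For the other components I need two-sided symbol bounds, not only the upper bounds in \eqref{eq:3.2.9}:
\[
 |h_+|=\frac{m|\lambda_+|}{\gamma k}\simeq\frac{m\rho}{k}\simeq km^{-3},\qquad \Bigl|\frac km h_+\Bigr|\simeq\rho\simeq m^{-4},\qquad \frac1{|m|}.
\]
These yield the exponents \(\frac{s-r+3}4\) for \(v_n\), \(\frac{s-r+4}4\) for \(w_n\), and \(\frac{s-r+1}4\) for \(p_n\), which are \eqref{eq:6.19}--\eqref{eq:6.21}. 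The lower bounds hold for every \(r\), so in particular on the stated ranges.

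The main point requiring care is the uniformity of the two-sided comparisons. I need \(-\lambda_+\simeq\rho\) from below and above on \(\Sigma_2^x\), and \(\rho\simeq k^2/(\mu_zm^4)\) for fixed \(k\) as \(m\to\infty\), which follows from \(\lim m^4\lambda_+=-N^2k^2/\mu_z\). These give \(c_r\) independent of \(n\) and \(\eta\), and a frequency cutoff independent of \(\eta\). The cutoff only has to ensure membership in \(\Sigma_2^x\) and \(|h_+|\le1\), and the \(\eta\)-dependence enters linearly through \(c_n\).
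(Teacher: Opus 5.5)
Your proposal is correct and follows essentially the same route as the paper's proof. Both use the same exact plane-wave family built on the slow eigenvector \((h_+,1)^{\mathsf T}\) at fixed \(k_*=\frac{2\pi}{L_x}\) and \(m_n\to\infty\), check in the same way that transport, the compatible mean \(\langle w_n\rangle_z\) and the quadratic pressure terms vanish, and finish with the same exact Parseval computation using \(|h_n|\simeq m_n^{-3}\), \(|\lambda_n|/\gamma\simeq m_n^{-4}\) and \(g\beta/m_n\).

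The differences are cosmetic. You take \(t_n=(-\lambda_+)^{-1}\), which gives \(e^{\lambda_+t_n}=e^{-1}\) exactly; the paper takes \(t_n=\rho_n^{-1}\) and uses \(e^{\lambda_nt_n}\to e^{-1}\). You should also say explicitly that you choose \(\eta_*\le\varepsilon_{\mathrm d}\). This places the data in the class of Theorem~\ref{thm:1.1}, so its uniqueness statement identifies your solution and gives \(S_\infty=0\).
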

\begin{proof}
Fix \(k_*=\frac{2\pi}{L_x}\), set \(m_n=\frac{2\pi n}{L_z}\),
and write \(a_n=a(k_*,m_n)\), \(\rho_n=\rho(k_*,m_n)\), and
\(\lambda_n=\lambda_+(k_*,m_n)\).
For sufficiently large \(n\), this root is real and negative, and
\[
 \lambda_n=-\frac{2\rho_n}{1+\sqrt{1-\frac{4\rho_n}{a_n}}},
 \qquad \frac{\lambda_n}{\rho_n}\longrightarrow-1,
 \qquad \rho_nm_n^4\longrightarrow\frac{N^2k_*^2}{\mu_z}.
\]
Put \(h_n=\frac{m_n\lambda_n}{\gamma k_*}\), and, for
\(0<\eta\le\eta_*\), choose
\[
 A_n=\frac{\eta}
 {\sqrt{\frac{L_xL_z}{2}}\,(1+|h_n|)\langle k_*,m_n\rangle^s}.
\]
Consider
\begin{equation}\label{eq:6.22}
\left\{
\begin{aligned}
 &\theta_n(t,x,z)=A_ne^{\lambda_nt}\cos(k_*x+m_nz),\\
 &v_n(t,x,z)=h_n\theta_n(t,x,z),\qquad
       w_n(t,x,z)=-\frac{\lambda_n}{\gamma}\theta_n(t,x,z),\\
 &p_n(t,x,z)=\frac{g\beta A_n}{m_n}e^{\lambda_nt}\sin(k_*x+m_nz).
\end{aligned}
\right.
\end{equation}
Since \(k_*h_n=\frac{m_n\lambda_n}{\gamma}\),
\[
 k_*v_n+m_nw_n=0,\qquad
 (v_n\partial_x+w_n\partial_z)f(k_*x+m_nz)=0
\]
for every smooth \(f\).  Thus the velocity is divergence free
and both transport terms vanish.  The vertical averages are
\[
 \langle v_n\theta_n\rangle_z=\frac12h_nA_n^2e^{2\lambda_nt},
 \qquad
 \langle v_n^2\rangle_z=\langle v_n^2\rangle
                       =\frac12h_n^2A_n^2e^{2\lambda_nt}.
\]
The first is independent of \(x\), so the compatible vertical mean
is zero.  The quadratic terms in the pressure formula cancel, and
\(\langle p_n\rangle=0\).  The temperature and hydrostatic equations
follow directly from \eqref{eq:6.22}.  The momentum equation reduces to
\[
 (\lambda_n+a_n)h_n+\frac{g\beta k_*}{m_n}
 =\frac{m_n}{\gamma k_*}
     \left(\lambda_n^2+a_n\lambda_n+\frac{N^2k_*^2}{m_n^2}\right)=0.
\]
Hence \eqref{eq:6.22} satisfies the full system and both diagnostic
relations.

The data have zero horizontal and vertical means, and the choice of
\(A_n\) ensures
\(\|v_n(0)\|_{H^s}+\|\theta_n(0)\|_{H^s}=\eta\).
Moreover,
\[
 \sup_{t\ge0}\|\partial_z\theta_n(t)\|_\infty
 \le m_nA_n\le C_s\eta.
\]
Choose \(\eta_*\le\varepsilon_{\mathrm d}\) with
\(C_s\eta_*\le\frac\gamma2\), independently of \(n\).
The family is strictly stratified and belongs to the stated
small-data class.  Each fixed member decays exponentially and has
zero horizontal temperature mean, so its terminal profiles vanish
and \(\Theta_n=\theta_n\).

Take \(t_n=\rho_n^{-1}\).  Then \(1+t_n\) is comparable to
\(m_n^4\), \(e^{\lambda_nt_n}\to e^{-1}\), and
\[
 m_n^3|h_n|\longrightarrow\frac{N^2k_*}{\gamma\mu_z},
 \qquad
 m_n^4\frac{|\lambda_n|}{\gamma}
       \longrightarrow\frac{N^2k_*^2}{\gamma\mu_z}.
\]
Parseval's identity gives
\[
 \|\theta_n(t_n)\|_{H^r}
 =\frac{\eta}{1+|h_n|}\langle k_*,m_n\rangle^{r-s}
                          e^{\lambda_n\rho_n^{-1}},
\]
while the other components have the exact norms
\[
 (\|v_n(t_n)\|_{H^r},\|w_n(t_n)\|_{H^r},\|p_n(t_n)\|_{H^r})
 =\left(|h_n|,\frac{|\lambda_n|}{\gamma},\frac{g\beta}{m_n}\right)
                         \|\theta_n(t_n)\|_{H^r}.
\]
Here sine and cosine have the same Sobolev norm.  The asymptotics
above prove \eqref{eq:6.18}, \eqref{eq:6.19}, \eqref{eq:6.20},
and \eqref{eq:6.21}, with constants independent of \(n\) and \(\eta\).
The temperature bound also includes \(r=s\).
\end{proof}

\begin{proof}[Completion of the proof of Theorem~\ref{thm:1.3}]
Fix \(0<\eta\le\eta_*\).  For each stated norm and admissible
index \(r\), an increase of the decay exponent by \(\zeta>0\)
in an estimate uniform over the stated data class would contradict
Proposition~\ref{prop:6.1}, since
\[
 0<c_r\le C_r(1+t_n)^{-\zeta}\longrightarrow0.
\]
Since \(S_{\infty,n}=0\) and \(\Theta_n=\theta_n\), the
temperature lower bound applies separately to the fluctuation
and the difference from equilibrium.  At \(r=s\),
\eqref{eq:6.18} excludes any positive uniform algebraic decay
rate for either quantity in \(H^s\).
Together with the upper bounds proved above, this completes
the proof.
\end{proof}

%=====================================================================
% End of Section 6
%=====================================================================

\end{document}